\theoremstyle{plain}
\newtheorem{lemma}{Lemma}[section]
\newtheorem{proposition}{Proposition}[section]
\newtheorem{theorem}{Theorem}[section]
\newcommand{\p} {\mathbb{P}}
\newcommand{\E} {\mathbb{E}}
\newcommand{\norm}[1]{\left\|{#1} \right\|}
\newcommand{\fnorm}[1]{\left\|{#1} \right\|_\text{F}}
\newcommand{\opnorm}[1]{\left\|{#1} \right\|}
\DeclareMathOperator*{\argmin}{arg\,min}
\let\oldenumerate\enumerate
\renewcommand{\enumerate}{
  \oldenumerate
  \setlength{\itemsep}{1pt}
  \setlength{\parskip}{0pt}
  \setlength{\parsep}{0pt}
}
\newcommand{\iid}{\stackrel{iid}{\sim}}
\newcommand{\mathr}{\mathbb{R}}
\newcommand{\mathn}{\mathcal{N}}
\newcommand{\br}[1]{\left( #1 \right)}
\newcommand{\cbr}[1]{\left\{ #1 \right\}}
\newcommand{\abs}[1]{\left| #1 \right|}
\newcommand{\pbr}[1]{\p\left( #1 \right)}
\newcommand{\ebr}[1]{\exp\left( #1 \right)}
\newcommand{\indic}[1]{{\mathbb{I}\left\{{#1}\right\}}}
\newcommand{\iprod}[2]{\left \langle #1, #2 \right\rangle}
\newcommand{\tspan}[1]{\text{span}\br{#1}}
\newcommand{\dist}{\stackrel{d}{=}}
\newcommand{\cd}[1]{{\cdot , #1}}
\newcommand{\cdi}[1]{{ #1, \cdot }}
\newcommand{\tpo}{{\br{1}}}
\newcommand{\tpt}{{\br{2}}}
\begin{document}
\begin{frontmatter}
\title{Optimality of Spectral Clustering 
in the Gaussian Mixture Model}
\runtitle{Optimality of Spectral Clustering in the GMM}

\begin{aug}
\author{\fnms{Matthias} \snm{L\"offler}\thanksref{a}
	\ead[label=e0]{matthias.loeffler@stat.math.ethz.ch}\ead[label=u1,url]{https://people.math.ethz.ch/\textasciitilde mloeffler/}}, 
\author{\fnms{Anderson Y.} \snm{Zhang}\ead[label=e1]{ayz@wharton.upenn.edu}},
\and
\author{\fnms{Harrison H.} \snm{Zhou}\ead[label=e2]{huibin.zhou@yale.edu}\ead[label=u2,url]{http://www.stat.yale.edu/\textasciitilde hz68/}}
\runauthor{L\"offler, Zhang, Zhou}

\affiliation{ETH Z\"urich, 
	 University of Pennsylvania, and Yale University}

\thankstext{a}{M. L\"offler gratefully acknowledges financial support of ERC grant UQMSI/647812 and EPSRC grant EP/L016516/1, which funded a research visit to Yale University, where parts of this work were completed. These grants also funded M. L\"offler during his PhD studies at the University of Cambridge.}

\address[a]{Seminar for Statistics \\
	ETH Z\"urich \\
	R\"amistrasse 101 \\
	8092 Z\"urich Switzerland \\
			\printead{e0}\\
			\printead{u1}}
			
\address{Department of Statistics\\
The Wharton School\\
University of Pennsylvania\\
Philadelphia, PA 19104\\
\printead{e1}}

\address{Department of Statistics\\
Yale University\\
New Haven, CT 06511 \\
\printead{e2}\\
\printead{u2}}
\end{aug}

\begin{abstract}
Spectral clustering is one of the most popular algorithms to group high dimensional data. It is easy to implement and computationally efficient. Despite its popularity and successful applications, its theoretical properties have not been fully understood. 
In this paper, we show that spectral clustering is minimax optimal in the  Gaussian Mixture Model with isotropic covariance matrix,  when the number of clusters is fixed and the signal-to-noise ratio is large enough. 
Spectral gap conditions are widely assumed in the literature to analyze spectral clustering. On the contrary, these conditions are not needed to establish optimality of spectral clustering in this paper. 
\end{abstract}

\begin{keyword}[class=MSC]
\kwd[Primary ]{62H30}
\end{keyword}

\begin{keyword}
\kwd{Spectral Clustering}
\kwd{Clustering}
\kwd{K-means}
\kwd{Gaussian Mixture Model}
\kwd{Spectral Perturbation}
\end{keyword}

\end{frontmatter}


\section{Introduction}
Clustering is a central and fundamental problem in statistics and machine learning. One popular approach to clustering of high-dimensional data is to use a spectral method \citep{spielman1996spectral,von2007tutorial}.  It tracks back to \citep{hall1970r, fiedler1973algebraic} and has enjoyed tremendous success. 
In computer science and machine learning, spectral clustering and its variants have been widely used to solve many different problems, including parallel computation \citep{van1995improved,simon1991partitioning,hendrickson1995improved}, graph partitioning  \citep{donath2003lower,guattery1998quality,ding2001min,chaudhuri2012spectral,coja2010graph,mcsherry2001spectral,qin2013regularized,vu2018simple},
and explanatory data mining and statistical data analysis \citep{alpert1995spectral, kannan2004clusterings,ng2002spectral,belkin2003laplacian}. It also has many real data applications, including image segmentation \citep{shi2000normalized, stella2003multiclass, meila2001learning}, text mining \citep{dhillon2001co, pan2010cross, ding2005equivalence}, speech separation \cite{bach2006learning,furui1989unsupervised},
and many others.  In recent years, spectral clustering has also been one of the most favored and studied methods for community detection \citep{lei2015consistency, rohe2011spectral, sarkar2015role, Jin15,balakrishnan2011noise, fishkind2013consistent,anandkumar2014tensor}. 

Spectral clustering is  easy to implement and has remarkably good performance. The idea behind spectral clustering is dimensionality reduction. First it performs a spectral decomposition on the dataset, or some related distance matrix, and only keeps the leading few spectral components. This way the dimensionality of the data is greatly reduced. 
Then a standard clustering method (for example, the $k$-means algorithm) is performed on the low dimensional denoised data to obtain an estimate of the cluster assignments. Due to the dimensionality reduction, spectral clustering is computationally less demanding than many other classical clustering algorithms.

In spite of its popularity, 
the theoretical properties of spectral clustering are not fully understood. 
One line of  theoretical investigation of spectral clustering is to consider the performance under general conditions when applied to eigenvectors of the graph Laplacian. For instance, \citep{hein2006uniform, hein2005graphs, gine2006empirical, von2008consistency, belkin2003laplacian} provide various forms of asymptotic convergence guarantees for the  graph  Laplacian, related spectral properties and spectral clustering. 
Another approach is to consider the performance of spectral clustering in a specific statistical model. Particularly,  spectral clustering for community detection in the stochastic blockmodel has been investigated frequently. \citep{lei2015consistency, Jin15, qin2013regularized, rohe2011spectral, zhou2019analysis} show that  spectral clustering applied to the adjacency matrix of the network can consistently recover hidden community structure. However, their upper bounds on the number of nodes incorrectly clustered are polynomial in the signal-to-noise ratio,  whereas the optimal rate of community detection  is exponential in the signal-to-noise ratio \cite{ZhangZhou16}. Therefore, in the literature spectral clustering is often used as a way to initialize (i.e., `warm start' ) iterative algorithms which eventually achieve the optimal misclustering error rate.

In this paper, we investigate the theoretical performance of spectral clustering in the isotropic Gaussian Mixture Model.  
In this model data points are generated from a mixture of Gaussian  distributions with identity covariance each, whose centers are separated from each other, resulting in a cluster structure. The goal is to recover the underlying  true cluster assignment.

Maximum likelihood estimation for the cluster assignment labels in the isotropic Gaussian Mixture Model is equivalent to the $k$-means algorithm. Finding an exact solution to the $k$-means objective has an exponential dependence on the dimension of the data points \cite{InabaKatohImai94,MahajanNimbhorkarVaradarajan09} and hence is not feasible, even in moderate dimensions. 
As a result, various approximations have been used and studied. One direction is to relax the $k$-means objective by semi-definite programming (SDP) \citep{Royer17, GiraudVerzelen19, PengWei07, fei2018hidden}. These relaxations are more robust to outliers than spectral methods \cite{SrivastavaSarkarHanasusanto20}, but have a slower running time. Another possibility is to apply Lloyd’s Algorithm \citep{Lloyd82, lu2016statistical}, which is a greedy iterative method to approximately find a solution to the $k$-means objective. 
Given a sufficiently good initializer, typically provided by spectral clustering \citep{kumar2010clustering}, Lloyd's Algorithm achieves the optimal misclustering rate \citep{lu2016statistical, Ndaoud19}. However, in fact, we show that spectral clustering itself is already optimal when the error variance is isotropic. 



A closely related result about spectral clustering for the Gaussian Mixture Model is \citep{VempalaWang04}. Under a strong separation condition 
spectral clustering is proved to achieve exact recovery of the underlying cluster structure with high probability.
In this paper, we consider also situations where only partial recovery is possible. We measure the performance of the spectral clustering output $\hat z$ by the normalized Hamming loss function $\ell(\cdot,\cdot)$.  
We summarize our main result informally in Theorem \ref{thm:informal}.

\begin{theorem}[Informal Statement of the Main Result]\label{thm:informal}
For $n$ data points generated from a Gaussian Mixture Model with isotropic covariance matrix, we assume that
\begin{itemize}
\item the number of clusters is finite
\item the size of the clusters are of the same order
\item the minimum distance among the centers, $\Delta$, goes to infinity
\item the dimension $p$ of each data point is at most of the same order as $n$.
\end{itemize}
Then,  with high probability, spectral clustering achieves the optimal misclustering rate, which is
\begin{align*}
\ell(\hat z, z^*)\leq  \ebr{-\br{1-o(1)} \frac{\Delta^2}{8}}.
\end{align*}
\end{theorem}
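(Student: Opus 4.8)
\emph{Proof strategy.}
Write $X\in\mathr^{n\times p}$ for the data matrix with rows $X_i=\theta_{z_i^*}+\epsilon_i$, let $P=\E X$ have rows $\theta_{z_i^*}$, put $E=X-P$ (i.i.d.\ $\mathn(0,1)$ entries) and $\Delta=\min_{a\neq b}\Norm{\theta_a-\theta_b}$. Spectral clustering forms the rank-$k$ truncation $\hat X=X\hat V\hat V^{\top}$, where $\hat V\in\mathr^{p\times k}$ collects the leading right singular vectors of $X$, and runs $k$-means on the rows of $\hat X$ to produce centers $\hat\theta_1,\dots,\hat\theta_k$ and labels $\hat z$; since $\ell$ is permutation-invariant we may assume the best relabelling is the identity. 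The one idea that drives everything is a structural identity: the rows of $\hat X$ --- hence the $\hat\theta_a$ and their differences --- lie in the column span of $\hat V$, and $\hat X_i=\hat V\hat V^{\top}X_i$, so
\begin{equation*}
\iprod{\hat X_i}{\hat\theta_b-\hat\theta_a}=\iprod{X_i}{\hat\theta_b-\hat\theta_a}.
\end{equation*}
Because at a $k$-means optimum every point is assigned to its nearest estimated center, $\hat z_i=b$ with $b\neq a:=z_i^*$ forces $\Norm{\hat X_i-\hat\theta_b}\le\Norm{\hat X_i-\hat\theta_a}$, and expanding this, using the identity above and $X_i=\theta_a+\epsilon_i$, rearranges exactly to
\begin{equation*}
\iprod{\epsilon_i}{\hat\theta_b-\hat\theta_a}\ \ge\ \tfrac12\Norm{\hat\theta_b-\theta_a}^2-\tfrac12\Norm{\hat\theta_a-\theta_a}^2 .
\end{equation*}
Thus misclassification of point $i$ is controlled by how large the inner product of its \emph{own} Gaussian noise $\epsilon_i$ with a nearly deterministic difference of centers can be --- a one-dimensional Gaussian event, which is where the target $\ebr{-\Delta^2/8}$ comes from.

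Two routine ingredients then sharpen the right-hand side. First, a spectral $\ell_2$ bound: $\hat X-P$ has rank at most $2k$ and $\Opnorm{\hat X-P}\le2\Opnorm{E}$ by Weyl, so $\fnorm{\hat X-P}^2\le 8k\,\Opnorm{E}^2\le Ckn$ with probability $1-o(1)$, using $\Opnorm{E}\le C(\sqrt n+\sqrt p)\le C\sqrt n$ since $p\lesssim n$. Second, a deterministic perturbation lemma for $k$-means: data within squared Frobenius distance $Ckn$ of $k$ balanced $\Delta$-separated centers, with $\Delta\to\infty$, must be clustered with at most $O(kn/\Delta^2)=o(n)$ errors and with $\max_a\Norm{\hat\theta_a-\theta_a}=o(\Delta)$. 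Inserting these into the last display, together with $\Norm{\hat\theta_b-\theta_a}\ge\Norm{\theta_b-\theta_a}-o(\Delta)\ge(1-o(1))\Delta$, yields that $\hat z_i=b\neq a$ implies
\begin{equation*}
\iprod{\epsilon_i}{\hat\theta_b-\hat\theta_a}\ \ge\ (1-o(1))\,\tfrac12\Norm{\theta_b-\theta_a}^2\ \ge\ (1-o(1))\,\tfrac{\Delta^2}{2}.
\end{equation*}

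The remaining, and genuinely hard, step is to decouple $\epsilon_i$ from $\hat\theta_b-\hat\theta_a$. The natural device is leave-one-out: let $\hat\theta^{(i)}_1,\dots,\hat\theta^{(i)}_k$ be the spectral-clustering centers obtained from the data matrix with row $i$ replaced by $\theta_{z_i^*}$, so that these are independent of $\epsilon_i$, and show that $\iprod{\epsilon_i}{\hat\theta_b-\hat\theta_a}=\iprod{\epsilon_i}{\hat\theta^{(i)}_b-\hat\theta^{(i)}_a}+(\text{remainder})$ with the remainder negligible against $\Delta^2$, uniformly in $i$. The ingredients here are: an effective eigengap --- balanced $\Delta$-separated clusters force the smallest nonzero singular value of $P$ to be $\gtrsim\sqrt n\,\Delta\gg\Opnorm{E}$, so the leading $k$-dimensional singular subspace moves by only $O(1/\Delta)$ when one of $n$ rows is changed; stability of the near-optimal $k$-means clustering under the separation condition, so $\hat z$ and its leave-one-out version agree off a negligible set; and the fact that $\hat\theta_b-\hat\theta_a$, being a difference of averages over $\asymp n/k$ points that approximately lies in a $k$-dimensional subspace one may take independent of $\epsilon_i$, changes little and is essentially orthogonal to the part of $\epsilon_i$ that matters. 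Once the remainder is controlled, $\hat z_i=b\neq a$ implies $\iprod{\epsilon_i}{v_i}\ge(1-o(1))\Delta^2/2$ with $v_i:=\hat\theta^{(i)}_b-\hat\theta^{(i)}_a$ independent of $\epsilon_i$ and $\Norm{v_i}=(1+o(1))\Norm{\theta_b-\theta_a}$; conditioning on $v_i$, $\iprod{\epsilon_i}{v_i}\sim\mathn(0,\Norm{v_i}^2)$, so the conditional probability is at most $\ebr{-(1-o(1))\Norm{\theta_b-\theta_a}^2/8}\le\ebr{-(1-o(1))\Delta^2/8}$.

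Finally, intersecting with the high-probability event carrying all the deterministic bounds (taken, for each $i$, independent of $\epsilon_i$ so the conditioning is legitimate) and summing over $i$ and $b\neq z_i^*$,
\begin{equation*}
\E\bigl[n\,\ell(\hat z,z^*)\bigr]\ \le\ n(k-1)\,\ebr{-(1-o(1))\tfrac{\Delta^2}{8}}+o(1),
\end{equation*}
so $\E[\ell(\hat z,z^*)]\le\ebr{-(1-o(1))\Delta^2/8}$ (the factor $k-1$ being absorbed into the $o(1)$ in the exponent since $k$ is fixed and $\Delta\to\infty$), and Markov's inequality upgrades this to the stated high-probability bound after a harmless weakening of the $o(1)$. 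I expect the decoupling to be the main obstacle: the structural identity reduces the whole problem to a Gaussian tail, but making ``removing one point barely changes the $k$-means centers'' quantitative and uniform over $i$ --- without assuming any eigengap beyond the one the separation condition supplies, and handling the configurations where $\mathrm{rank}(P)<k$ or the centers are widely spread (e.g.\ by a preliminary recentring) --- is where the real work lies.
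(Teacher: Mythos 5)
Your opening reduction is sound and parallels the paper's Step 1: work with the rank-$k$ truncation, use the local $k$-means optimality condition, and note that a misclassification of point $i$ forces an inner product of $\epsilon_i$ with a difference of estimated centers to be large. The consistency bound $\fnorm{\hat P-P}^2\lesssim kn$ and the resulting $\max_a\Norm{\hat\theta_a-\theta_a}=o(\Delta)$ are also in the paper (Lemma \ref{lem:theta_dist}). But the step you identify as the hard one --- decoupling $\epsilon_i$ from $\hat\theta_b-\hat\theta_a$ --- rests on a claim that is false in general and that the paper is explicitly built to avoid. You assert that balanced $\Delta$-separated centers force $\sigma_{\min}^{\neq 0}(P)\gtrsim\sqrt n\,\Delta$ and hence that the leading $k$-dimensional singular subspace is stable under a leave-one-out perturbation. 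Consider $k=3$ with $\theta_1=0$, $\theta_2=2\Delta e_1$, $\theta_3=\Delta e_1+\delta e_2$ for tiny $\delta>0$: all pairwise distances exceed $\Delta$ and clusters may be balanced, yet $\sigma_2((\theta_1,\theta_2,\theta_3))\asymp\delta$, so $\sigma_2(P)\asymp\delta\sqrt{n}$ can be far below $\Opnorm{E}\asymp\sqrt{n}+\sqrt{p}$, and $\sigma_3(P)=0$. With no gap between the last few population singular values and $0$, $\text{span}(\hat V_{1:k})$ is not stable under removing one row, so $\hat\theta^{(i)}_b-\hat\theta^{(i)}_a$ need not be $o(\Delta)$-close to $\hat\theta_b-\hat\theta_a$, and your conditioning argument collapses. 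The paper stresses repeatedly (including as its headline contribution) that no singular-value gap is assumed; the rank of $P$ may even be strictly less than $k$ when centers are collinear.

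The paper's way around this is what you would have to replicate: it defines a data-dependent cutoff $r$ in \eqref{def:r} so that $\sigma_r-\sigma_{r+1}$ is large by construction, splits the misclustering bound into an ``$A$'' part tied to indices $1,\dots,r$ (where there \emph{is} a gap) and a ``$B$'' part tied to $r+1,\dots,k$ (where $\sigma_j$ may be $\approx 0$), and handles them with different tools. The $A$-part is linearized in $E$ via the Koltchinskii--Lounici/Koltchinskii--Xia operator-perturbation expansion (Lemma \ref{lem:VV_pert}), reducing to a $\chi^2_k$ tail for $\Norm{U_{1:r}^TE(I-VV^T)e_i}$ and producing the sharp $\exp(-\Delta^2/8)$ exponent. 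The $B$-part is shown negligible using Lemma \ref{lem:hat_V_normal}: after normalization, $(I-VV^T)\hat v_l$ is Haar-distributed on the sphere in $\text{span}(I-VV^T)$, so $e_i^T(I-VV^T)\hat v_l$ has a Gaussian tail of variance $\approx 1/n$ even though $\hat v_l$ itself is pure noise. This is a genuinely different route from the leave-one-out device you propose (which the paper contrasts with Abbe et al.\ \cite{abbe2017entrywise}): the perturbation-expansion plus Haar-invariance route is precisely what lets the paper dispense with any spectral-gap hypothesis, which your sketch cannot do. Even restricted to the case where your eigengap premise does hold, you leave the quantitative, uniform-in-$i$ control of the leave-one-out remainder entirely unargued, so the technical core is missing as well.
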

This provides the first theoretical guarantee on the optimality of spectral clustering in a general setting. The separation parameter $\Delta$ covers a wide scale of values, ranging  from consistent cluster estimation to exact recovery. We refer readers to Theorem \ref{thm:main} 
for a rigorous statement and a slightly stronger result, where we allow the number of clusters to grow with $n$, the cluster sizes to be not necessarily of the same order, and the dimension $p$ to possibly grow faster than $n$. 

In particular, in Theorem \ref{thm:informal}, no spectral gap (i.e., singular value gap) condition is needed. This is contrary to the existing literature \cite{abbe2017entrywise, lei2015consistency, Jin15, rohe2011spectral}, where various forms of eigenvalue gap or singular value gap conditions are required to apply matrix perturbation theory. This does not match the intuition that the difficulty of clustering should be determined by the distances between the cluster centers,  regardless of the spectral structure. In this paper, we completely drop any condition on the spectral gap. We achieve this by showing that the contribution of singular vectors from smaller singular values is negligible. 

A recent related paper by Abbe et al. \cite{abbe2017entrywise} studies community detection in an  idealized scenario, where the network has two equal-size communities and the connectivity probabilities are  equal to $an^{-1}\log n$ or $bn^{-1}\log n$, where $a$ and $b$ are fixed constants. They show that the performance of  clustering on the second leading eigenvector matches with the minimax rate, by using a leave-one-out technique.  The technical tools we use in this paper are different. We extend spectral operator perturbation theory of \citep{KoltchinskiiLouniciAAHP, KoltchinskiiXia15} and introduce new techniques to establish optimality of spectral clustering and to remove the spectral gap condition.


\paragraph{Organization} The paper is organized as follows. In Section \ref{sec:main}, we first introduce the Gaussian Mixture Model,  followed by the spectral clustering algorithm, and then state the main results. We discuss extensions and potential caveats of our analysis in Section \ref{sec:disc}. The proof of the main theorem is given in Section \ref{sec:proof}, which is started with a proof sketch. We include the proofs of all the lemmas in the supplement.

\paragraph{Notation} For any matrix $M$, we denote by $\opnorm{M}$ and $\fnorm{M}$ its operator norm and Frobenius norm, respectively.  $M_{i,\cdot}$ denotes the $i$-th row of $M$ and $M_\cd{i}$  its $i$-th column.  For matrices $M,N$ of the same dimension, their inner product is defined as $\iprod{M}{N} = \sum_{i,j}M_{ij}N_{ij}$. For any $d$, we denote by  $\{e_a\}_{a=1}^d$ the standard Euclidean basis with $e_1=(1,0,0,\ldots), e_2=(0,1,0,\ldots,0),\ldots, e_d=(0,0,0,\ldots, 1)$. We let $1_d$ be a vector of length $d$ whose entries are all $1$. We use $[d]$ to denote the set $\{1,2,\ldots,d\}$ and $\indic{\cdot}$ to denote the indicator function. For $y_1,y_2,\ldots, y_d\in\mathr$,  $\text{diag}(y_1,y_2,\ldots, y_d)$ denotes the $d\times d$ diagonal matrix with diagonal entries  $y_1,y_2,\ldots, y_d$. 

\section{Main Results} \label{sec:main}

\subsection{Gaussian Mixture Model}
We consider an isotropic Gaussian Mixture Model with $k$ centers 
$\theta_1^*,\ldots,\theta_k^*\in\mathr^p$ and a cluster assignment vector $z^* \in [k]^n$. In this model, 
independent observations $\{X_i\}_{i\in[n]}$ are generated as follows:
\begin{align} \label{eq:GMM}
X_i = \theta^*_{z^*_i} + \epsilon_i, ~~~~{\epsilon_i} \thicksim  \mathn(0,I_p).
\end{align}

The goal of clustering is to recover the cluster assignment $z^*$. We measure the quality of a clustering algorithm by the average number of misclustered labels. Since the cluster structure is invariant to permutation of the label symbols, we define the misclustering error as 
\begin{align*}
\ell(z,z^*) := \min_{\phi\in \Phi} \frac{1}{n}\sum_{i\in[n]}  \indic{\phi\br{z_i} \neq z^*_i},
\end{align*}
where $\Phi=\cbr{\phi: ~ \phi~ \text{is a bijection from }[k]\text{ to }[k]}$. 

The difficulty of clustering is mainly determined by the distances between the centers $\cbr{\theta^*_1,\ldots,\theta^*_k}$. If two centers are exactly equal to each other, it is impossible to distinguish the corresponding two clusters. We define $\Delta$ to be the minimum distance among centers:
\begin{align} 
\Delta = \min_{j,l\in[k]:j\neq l}\norm{\theta^*_j - \theta^*_l}.\label{eqn:delta}
\end{align}

Another quantity that determines the possibility of consistent clustering is 
the size of the clusters. 
When the size of a cluster is small, recovery might be more difficult. We quantify the size of the smallest cluster by $\beta$, defined as  
\begin{align}\label{eqn:beta_def}
 \beta = \frac{\min_{j\in[k]} \abs{\cbr{i\in[n]:z^*_i = j}}}{n/k}.
\end{align} 
Note that $\beta$ cannot be greater than 1. 
We allow the case $\beta =o(1)$, such that clusters sizes may differ in magnitude. 



\subsection{Spectral Clustering}
Various forms of spectral clustering have been proposed and studied in the  literature. Spectral clustering is an umbrella term for clustering  after a  dimension reduction through a spectral decomposition. The variants differ mostly for the matrix on which the spectral decomposition is applied, and wich spectral components are used for the subsequent clustering. The clustering method used most commonly is the $k$-means algorithm.

 In the context of community detection, spectral clustering \citep{lei2015consistency, Jin15, qin2013regularized, rohe2011spectral, zhou2019analysis} is usually performed on the eigenvectors of the adjacency matrix. For general clustering settings, \citep{hein2006uniform, hein2005graphs, gine2006empirical, von2008consistency, belkin2003laplacian, von2007tutorial} first obtain a similarity matrix from the original data points by applying a kernel function. Then the graph Laplacian is constructed, whose eigenvectors are used for clustering. In \citep{kannan2009spectral, kumar2010clustering}, spectral clustering is performed directly on the original data matrix.

The spectral clustering algorithm considered in this paper is presented in Algorithm \ref{alg:main}. It is simple, involves only one singular value decomposition (SVD) and one $k$-means clustering step. Despite the simplicity of this approach, it is powerful, as it achieves the optimal misclustering rate. 
The key step in the algorithm that leads to the optimal rate is to weight the empirical singular vectors by the corresponding empirical singular values. 

%

\begin{algorithm}[h]
\SetAlgoLined
\KwIn{Data matrix $X\in\mathr^{p\times n}$, number of clusters $k$}
\KwOut{Clustering assignment vector $\hat z\in [k]^n$}
 \nl Perform SVD on $X$  to decompose $$X = \sum_{i=1}^{p \wedge n} \hat \sigma_i \hat u_i \hat v_i^T,$$ where $\hat \sigma_1 \geq \hat \sigma_2 \geq \ldots \geq \hat \sigma_{p\wedge n} \geq 0$ and $\cbr{\hat u_i}_{i=1}^{p \wedge n}\in\mathr^p, \cbr{\hat v_i}_{i=1}^{p \wedge n}\in\mathr^n$.
 

\nl Consider the first $k$ singular values and corresponding singular vectors. Define $\hat \Sigma:=\text{diag}(\hat \sigma_1, \dots, \hat \sigma_k), ~\hat V:= \br{\hat v_1,\ldots, \hat v_k}, ~\hat U:= \br{\hat u_1,\ldots, \hat u_k}$ and

  $$\hat Y := \hat U^T X=\hat \Sigma \hat V^T\in\mathr^{k\times n}.$$ 
 \nl  Perform $k$-means on the columns of $\hat Y $ 
 and return an estimator $\hat z$ for the clustering assignment vector, i.e., 
 \begin{align}
 \br{\hat z,\cbr{\hat c_j}_{j=1}^k} = \argmin_{ z\in [k]^n , \cbr{ c_j}_{j=1}^{k} \in \mathr^{k}} \sum_{i\in[n]} \norm{\hat Y_\cd{i} - c_{z_i}}^2.\label{eqn:kmeans_Y}
 \end{align}
\caption{Spectral Clustering}\label{alg:main}
\end{algorithm}

As common in the clustering literature, we assume that $k$, the number of clusters, is known. The purpose of the SVD is to reduce the dimensionality of the data while preserving underlying structure. After SVD, the dimensionality of the data vectors is reduced from $p$ to $k$ \footnote{Here we assume $p\geq k$. If $p<k$ then the dimensionality reduction is not needed and Algorithm \ref{alg:main} reduces to the $k$-means algorithm. To accommodate both $p\geq k$ and $p<k$, Step 2 of Algorithm \ref{alg:main} can be slightly changed by using the leading $\min\cbr{k,p}$ singular vectors instead.}. This makes the follow-up $k$-means algorithm computationally feasible compared to applying it directly onto the columns of $X$. Finding an exact solution for the  $k$-means objective of the projected data (i.e.,  (\ref{eqn:kmeans_Y})) has computational complexity $O(n^{k^2+1})$ \cite{InabaKatohImai94}, which is polynomial in $n$ if $k$ is constant. In Section \ref{subsec:one_plus_epsilon}, we show how to modify Algorithm \ref{alg:main}, using a $(1+\varepsilon)$-solution for the $k$-means algorithm to achieve linear (in $n$) complexity. 

The idea of weighting singular vectors by the corresponding singular values is natural. The importance of singular vectors is different:  singular vectors with smaller singular values should carry relatively less useful information, and consequently deserve less attention. Clustering on $\hat Y$ instead of $\hat V$ is also the main reason why we are able  to remove the spectral gap condition. In particular, we will show in Lemma \ref{lem:equivalence} that
Algorithm \ref{alg:main} is equivalent to Algorithm \ref{alg:rankk}, which 
performs clustering on the columns of the rank-$k$ matrix approximation of $X$. Similar ideas of using low rank matrix approximations for clustering have also been proposed in  \citep{kumar2010clustering, gao2018community}.
\subsection{Consistency}
We first present a preliminary result that proves consistency of the estimator $\hat z$ obtained in  Algorithm \ref{alg:main}. 
\begin{proposition}
\label{prop:cons}
Assume that $\Delta/(\beta^{-0.5} k\br{1+p/n}^{0.5}) \geq C$ for some large enough constant $C>0$. Then the output of Algorithm \ref{alg:main}, $\hat z$, satisfies for another constant $C'>0$
\begin{align} \label{prop:cons error}
&\ell(\hat z,z^*)\leq  \frac{C'k\br{1+\frac{p}{n}}}{\Delta^2}
\end{align} 
with probability at least $1-\exp(-0.08n)$,
\end{proposition}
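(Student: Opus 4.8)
The plan is to reduce the $k$-means step of Algorithm~\ref{alg:main} to one carried out on the rank-$k$ SVD truncation $\hat X_k := \sum_{i\le k}\hat\sigma_i\hat u_i\hat v_i^T = \hat U\hat Y$ of $X$, to control deterministically how far $\hat X_k$ sits from the matrix of true centers, and finally to convert that $\ell_2$-type bound into a bound on the normalized Hamming loss using the separation $\Delta$. Since $\hat U$ has orthonormal columns, $k$-means on the columns of $\hat Y$ and on the columns of $\hat X_k$ return the same label vector $\hat z$ --- this is exactly the content of Lemma~\ref{lem:equivalence} --- so I would work with the latter and let $\hat\theta_i\in\mathr^p$ denote the $k$-means center assigned to the $i$-th column of $\hat X_k$. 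Write $P\in\mathr^{p\times n}$ for the matrix whose $i$-th column is $\theta^*_{z^*_i}$ (so $\text{rank}(P)\le k$) and $E=(\epsilon_1,\dots,\epsilon_n)$, so that $X=P+E$.

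For the spectral step I would argue deterministically. By Weyl's inequality and $\text{rank}(P)\le k$ one has $\opnorm{\hat X_k - X}=\hat\sigma_{k+1}\le\opnorm{E}$, and trivially $\opnorm{X-P}=\opnorm{E}$, so $\opnorm{\hat X_k-P}\le 2\opnorm{E}$; since $\hat X_k-P$ has rank at most $2k$ this upgrades to $\fnorm{\hat X_k-P}\le\sqrt{2k}\,\opnorm{\hat X_k-P}\le 2\sqrt{2k}\,\opnorm{E}$. Feeding $(z^*,\{\theta^*_j\})$ into the $k$-means objective for $\hat X_k$ shows $\sum_i\norm{(\hat X_k)_\cd i-\hat\theta_i}^2\le\fnorm{\hat X_k-P}^2$, and a triangle inequality then gives
\begin{align*}
\sum_{i\in[n]}\norm{\hat\theta_i-\theta^*_{z^*_i}}^2 \le 4\fnorm{\hat X_k-P}^2 \le 32k\,\opnorm{E}^2 .
\end{align*}
The only randomness enters through the operator norm of a $p\times n$ Gaussian matrix: with $t=\sqrt{0.16\,n}$, the bound $\p(\opnorm{E}>\sqrt n+\sqrt p+t)\le e^{-t^2/2}=e^{-0.08n}$ yields $\opnorm{E}^2\le C_1 n(1+p/n)$, and hence $\sum_i\norm{\hat\theta_i-\theta^*_{z^*_i}}^2\le C_2 kn(1+p/n)$, on an event of probability at least $1-e^{-0.08n}$.

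On that event I would finish with a counting argument. Call an index $i$ \emph{bad} if $\norm{\hat\theta_i-\theta^*_{z^*_i}}\ge\Delta/2$; the previous display bounds the number of bad indices by $4C_2 kn(1+p/n)/\Delta^2$. For a good index $i$, the center $\hat\theta_i$ is one of the $k$ fitted centers and lies in the Euclidean ball of radius $\Delta/2$ about $\theta^*_{z^*_i}$, and by (\ref{eqn:delta}) these $k$ balls are pairwise disjoint. Under the hypothesis $\Delta/(\beta^{-1/2}k(1+p/n)^{1/2})\ge C$ with $C$ large, the number of bad indices is strictly smaller than $\min_j\abs{\{i:z^*_i=j\}}=\beta n/k$, so every true cluster $G_j:=\{i:z^*_i=j\}$ contains a good index; letting $a(j)$ be its $\hat z$-label, disjointness of the balls makes $j\mapsto a(j)$ injective, hence a bijection of $[k]$, and --- again by disjointness --- forces every good index in $G_j$ to carry the label $a(j)$. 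Thus, with $\phi$ the bijection defined by $\phi(a(j))=j$, every misclustered index is bad, giving $\ell(\hat z,z^*)\le 4C_2 k(1+p/n)/\Delta^2$, which is the asserted bound.

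I expect the last paragraph to be the real obstacle: the separation condition has to be used twice --- once to guarantee that no true cluster is emptied of correctly-placed points, and once, through the disjoint balls, to pin down the label bijection and to rule out a true cluster being split across two fitted centers --- and the constants coming out of the spectral bound and the Gaussian tail bound must be tracked so that the threshold $C$ in the statement is genuinely large enough. A minor point is that the $k$-means optimum could a priori leave some clusters empty; the disjoint-ball argument shows this cannot happen on the good event, so it is harmless. Steps one and two are, by contrast, routine once the matrices $P$, $\hat X_k$ and the reduction of Lemma~\ref{lem:equivalence} are in place.
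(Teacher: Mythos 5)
Your proposal is correct and follows essentially the same route as the paper: it reduces to $k$-means on the rank-$k$ truncation $\hat P = \hat U\hat Y$ via Lemma \ref{lem:equivalence}, uses $\opnorm{\hat P-X}\le\opnorm{E}$ together with the rank bound to get $\fnorm{\hat P-P}\le 2\sqrt{2k}\opnorm{E}$, bounds the fitted centers by the $k$-means optimality and the triangle inequality, counts the indices with $\norm{\hat\theta_i-\theta^*_{z^*_i}}\ge\Delta/2$, and uses the separation condition to show the remaining indices must all be correctly labelled (your disjoint-balls argument is the same as the paper's triangle-inequality contradiction inside Lemma \ref{lem:theta_dist}). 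The only cosmetic difference is that you invoke Weyl's inequality to bound $\hat\sigma_{k+1}$ where the paper cites the best rank-$k$ approximation property; both give the same estimate.
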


Proposition \ref{prop:cons} is an immediate consequence of Lemma \ref{lem:equivalence} and Lemma \ref{lem:theta_dist}, which are stated in Section \ref{sec:proof}. It is worth mentioning that there is no spectral gap condition assumed.
In addition, Proposition \ref{prop:cons}  can be extended to mixture models  where the errors $\cbr{\epsilon_i}$ are not  necessarily $\mathcal{N}(0,I_p)$ distributed. We include this extension in Appendix \ref{apx:prop_cons_extension} as Proposition \ref{prop:cons_extension}. 


\subsection{Optimality}
In the next theorem we establish that Algorithm \ref{alg:main} achieves in fact an exponential convergence rate in the Gaussian Mixture Model when the covariance matrix of the Gaussian noise variables is isotropic. 

\begin{theorem}\label{thm:main}
Suppose that
\begin{align}\label{eqn:Delta_assumption}
\frac{\Delta}{k^{10.5}\beta^{-0.5}(1+\frac{p}{n})\left ({\frac{n-k}{n}} \right )^{-0.5}} \rightarrow \infty.
\end{align}
Then the output of Algorithm \ref{alg:main}, $\hat z$, satisfies
\begin{align}\label{eqn:main}
 \ell(\hat z,z^*)  \leq \ebr{ - \br{1- \br{\frac{\Delta}{k^{10.5}\beta^{-0.5}(1+\frac{p}{n})^{}\left ({\frac{n-k}{n}} \right )^{-0.5}}}^{-0.1}} \frac{\Delta^2}{8}}
\end{align}
with probability at least $1-\ebr{-\Delta}-3nk\ebr{-0.08(n-k)}$.
\end{theorem}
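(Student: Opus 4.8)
The plan is to bootstrap the polynomial misclustering bound of Proposition~\ref{prop:cons} into the exponential bound \eqref{eqn:main} through a sharpened, entrywise analysis of the projected data. By Lemma~\ref{lem:equivalence} it suffices to study $k$-means applied to the columns of the best rank-$k$ approximation $\hat X:=\hat U\hat Y=\hat U\hat\Sigma\hat V^T$ of $X$. Write $\Theta^*:=(\theta^*_{z^*_1},\ldots,\theta^*_{z^*_n})$ and $E:=(\epsilon_1,\ldots,\epsilon_n)$, so that $X=\Theta^*+E$ with $\rank{\Theta^*}\le k$; let $\Theta^*=U^*\Sigma^*V^{*T}$ be a thin SVD, $P^*:=U^*U^{*T}$ the projection onto the column space of $\Theta^*$, and $\hat P:=\hat U\hat U^T$. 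Since $\hat U^TX=\hat\Sigma\hat V^T$ one has $\hat X=\hat PX$, and $P^*\theta^*_j=\theta^*_j$ yields the exact decomposition $\hat X_\cd{i}=\theta^*_{z^*_i}+\hat P\epsilon_i+(\hat P-P^*)\theta^*_{z^*_i}$. Proposition~\ref{prop:cons} supplies a warm start $\ell(\hat z,z^*)\le\eta_0:=C'k(1+p/n)/\Delta^2=o(1)$; after relabelling by the optimal bijection, the centroid of cluster $j$ lifted to $\mathr^p$, $\hat\Theta_j:=\hat U\hat c_j$, decomposes as $\hat\Theta_j=\theta^*_j+\hat P\bar\epsilon_j+(\hat P-P^*)\theta^*_j+r_j$, where $\bar\epsilon_j$ is an average over the correctly labelled points of cluster $j$ and $r_j$ absorbs the at most $n\eta_0$ mislabelled ones.

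Second, I would localise the mistakes. After relabelling, if $\hat z_i=l\ne z^*_i$ then, by optimality of the $k$-means assignment, $\norm{\hat X_\cd{i}-\hat\Theta_l}^2\le\norm{\hat X_\cd{i}-\hat\Theta_{z^*_i}}^2$; substituting the two decompositions, expanding, and using that $u:=\theta^*_{z^*_i}-\theta^*_l$ lies in the range of $P^*$ (so $\hat Pu=u+(\hat P-P^*)u$) reduces this to
\begin{align*}
\iprod{u}{\epsilon_i}\le-\tfrac12\br{1-\delta_i}\norm{u}^2,
\end{align*}
where $\delta_i$ equals $\norm{u}^{-2}$ times a sum of remainder terms: the bilinear form $\iprod{(\hat P-P^*)u}{\epsilon_i}$, the centroid terms $\iprod{u}{\hat P\bar\epsilon_l}$ and $\iprod{u}{(\hat P-P^*)\theta^*_l}$, the residual $\iprod{u}{r_l}$, and second-order remainders of the type $\norm{\hat P\epsilon_i}^2$ differences. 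The crucial point is that $\iprod{u}{\epsilon_i}\sim\mathn(0,\norm{u}^2)$ is a genuine Gaussian — the data-dependence enters only through $\delta_i$ — so on the event $\cbr{\delta_i\le\delta}$ a one-sided tail bound gives $\pbr{\iprod{u}{\epsilon_i}\le-\tfrac12(1-\delta)\norm{u}^2}\le\ebr{-\tfrac{(1-\delta)^2}{8}\norm{u}^2}\le\ebr{-\tfrac{(1-\delta)^2}{8}\Delta^2}$. A union bound over the at most $k$ competing labels, followed by a Markov bound on $\tfrac1n\sum_i\indic{\hat z_i\ne z^*_i}$ in the partial-recovery regime (or a plain union bound over $i$ once $\ebr{-\Delta^2/8}\ll1/n$, giving exact recovery), then yields \eqref{eqn:main} with $\delta$ supplying the $o(1)$ correction in the exponent.

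The crux — and the step I expect to be the main obstacle — is the uniform bound $\delta_i\lesssim\br{\tfrac{\Delta}{k^{10.5}\beta^{-0.5}(1+p/n)((n-k)/n)^{-0.5}}}^{-0.1}\to0$ on an event of probability at least $1-\ebr{-\Delta}-3nk\,\ebr{-0.08(n-k)}$. The pieces involving only noise averages — $\norm{\bar\epsilon_j}\lesssim\sqrt{pk/(\beta n)}$, $\norm{\hat P\epsilon_i}\lesssim\sqrt k+\sqrt{\log n}$, with $\chi^2_{n-k}$ tails producing the $\ebr{-0.08(n-k)}$ factors — are routine, and the residual $r_j$ is controlled by noting that a point of cluster $l$ is mislabelled into $j$ only on a Gaussian event of probability $\ebr{-c\norm{\theta^*_l-\theta^*_j}^2}$, so distant clusters are exponentially suppressed and $\norm{r_j}/\Delta\to0$ after a short bootstrap off the crude rate $\eta_0$. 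The genuinely hard term is the bilinear form $\iprod{(\hat P-P^*)u}{\epsilon_i}$, where $\hat P$ depends on $\epsilon_i$ and no singular-value gap of $\Theta^*$ is available. Here I would (i) write $u=\Theta^*(w_{z^*_i}-w_l)$ with $w_j$ the normalised indicator vector of cluster $j$ (of norm $\asymp\sqrt{k/(\beta n)}$ — the source of the $\beta^{-1/2}$ factor), reducing the form to $\norm{\Theta^{*T}(\hat P-P^*)\epsilon_i}$; (ii) expand $\hat P-P^*=\sum_{s\ge1}S_s(E)$ with $S_s$ homogeneous of degree $s$ in $E$, built from $E$ and $(\Theta^*)^{\dagger}$, extending the operator perturbation theory of \citep{KoltchinskiiLouniciAAHP,KoltchinskiiXia15}; and (iii) use $\rank{\Theta^*}\le k$ — which gives the gap-free bound $\opnorm{(\hat P-P^*)\Theta^*}\le2\opnorm{E}$ and makes $\Theta^{*T}(\hat P-P^*)$ effectively supported on the at most $k$-dimensional row space of $\Theta^*$ (this is precisely what ``the contribution of singular vectors from smaller singular values is negligible'' means) — so as to split off the diagonal self-interaction of $\epsilon_i$ (an $O(1)$ contribution absorbed into $\delta_i$) from the decoupled off-diagonal bilinear form in independent Gaussians, estimated by a Hanson--Wright-type inequality, making the relevant noise dimension $k$ rather than $p$. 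Summing the higher-order terms of the expansion geometrically costs the large power of $k$ in \eqref{eqn:Delta_assumption}, and inserting $\opnorm{E}\lesssim\sqrt n(1+\sqrt{p/n})$ together with the $(n-k)/n$ correction incurred by re-estimating the $k$ centroids then converts the estimates into the parameter appearing in \eqref{eqn:Delta_assumption}, closing the argument.
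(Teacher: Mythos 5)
Your reduction to the rank-$k$ approximation (Lemma \ref{lem:equivalence}), the use of local optimality of the $k$-means labels, the bootstrap from Proposition \ref{prop:cons}, the one-sided Gaussian bound on $\iprod{\theta^*_{z^*_i}-\theta^*_l}{\epsilon_i}$ producing the $\Delta^2/8$ exponent, and the closing Markov step all track the paper's argument. The gap is exactly where you flag the obstacle: the bilinear form $\iprod{(\hat U\hat U^T-U^*U^{*T})u}{\epsilon_i}$. You propose to expand $\hat U\hat U^T-U^*U^{*T}=\sum_{s\ge 1}S_s(E)$ in powers of $E$, decouple the $\epsilon_i$ self-interaction via a Hanson--Wright-type estimate, and sum the tail geometrically. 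That expansion does not exist under the theorem's hypotheses. The Koltchinskii--Lounici/Koltchinskii--Xia resolvent series, and the paper's Lemma \ref{lem:VV_pert} which sharpens it, require $\opnorm{E}$ to be a small fraction of the spectral gap isolating the projected block; for the projector onto all $k$ top left singular vectors of $\Theta^*$ the relevant gap is $\sigma_k(\Theta^*)-\sigma_{k+1}(\Theta^*)=\sigma_k(\Theta^*)$, which Theorem \ref{thm:main} explicitly allows to vanish (collinear centres). The coefficients $1/\sigma_j$ you would inherit blow up and the Riesz contour passes through spectrum. Your point (iii), that $\rank{\Theta^*}\le k$ gives the gap-free operator bound $\opnorm{(\hat U\hat U^T-I)\Theta^*}\le 2\opnorm{E}$, is correct, but it only supports a Cauchy--Schwarz estimate against $\norm{\epsilon_i}\asymp\sqrt p$, leaving a remainder of order $\sqrt{kp/\beta}\br{1+\sqrt{p/n}}/\Delta^2$; for $p\asymp n$ this is $o(1)$ only when $\Delta^4\gtrsim kn/\beta$, far stronger than \eqref{eqn:Delta_assumption}. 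The rank bound alone does not give the decoupled $k$-dimensional Gaussian you need, and it is not what the paper means by the smaller singular values being negligible.

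The missing idea is the cutoff $r$ of \eqref{eqn:r_def} together with the rotation-invariance Lemma \ref{lem:hat_V_normal}. The paper never expands the full projector: it picks $r\in[k]$ so that $\sigma_r-\sigma_{r+1}\ge\rho\sqrt{n+p}$ with $\rho\to\infty$, which forces $\max_{j>r}\sigma_j\le k\rho\sqrt{n+p}$, and splits the misclustering indicator into an $A_{i,a}$ part living in $\mathrm{span}(\hat u_1,\ldots,\hat u_r)$ and a $B_{i,a}$ part living in $\mathrm{span}(\hat u_{r+1},\ldots,\hat u_k)$. For $A_{i,a}$, the block $1{:}r$ has a gap $\ge\rho\sqrt{n+p}$ by construction (and is further partitioned into sub-blocks $J_m$ with condition number $1+o(1)$), so Lemma \ref{lem:VV_pert} applies and one lands on the genuine Gaussian $U_{1:r}^TE(I-VV^T)e_i$, with the deterministic $U_{1:r}$ doing the $p\to k$ dimension reduction and Borell's inequality for $\chi^2_k$ delivering the sharp exponent. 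For $B_{i,a}$, no perturbation series is used at all: $\hat\sigma_l\lesssim k\rho\sqrt{n+p}$ by Weyl, and Lemma \ref{lem:hat_V_normal} shows $(I-VV^T)\hat v_l/\norm{(I-VV^T)\hat v_l}$ is exactly Haar on the orthogonal complement of $V$, so $e_i^T\hat v_l$ has sub-Gaussian tails of scale $\lesssim 1/\sqrt{n-k}$ and the $B_{i,a}$ contribution is super-exponentially small (this is also where the $(n-k)/n$ factor in \eqref{eqn:Delta_assumption} and the $\ebr{-0.08(n-k)}$ probability term arise). Without the cutoff $r$ and the Haar-invariance argument, the degenerate-singular-value case --- precisely the case that makes the gap-free conclusion nontrivial --- cannot be handled by the expansion you describe.
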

In Theorem \ref{thm:main}, we allow  the number of clusters $k$ to grow with $n$, the cluster sizes not to be of the same order (quantified by $\beta$), 
and the dimension $p$ to be of larger order than $n$. This is slightly stronger than the informal statement we make in Theorem \ref{thm:informal}. 
%
%

The following minimax lower bound for recovering $z^*$ in the Gaussian Mixture Model is established in \citep{lu2016statistical}: 
\begin{align}\label{eqn:minimax}
\inf_{\hat z}\sup_{ (\theta_1^*,\ldots,\theta^*_k), z^* } \E \ell(\hat z, z^*) \geq  \ebr{-\br{1-o(1)} \frac{\Delta^2}{8}}, ~\text{if}~\frac{\Delta}{\log \br{k\beta^{-1}}}\rightarrow\infty.
\end{align}
Here the infimum is taken over all feasible estimators $\hat z$, and the supremum is taken over all possible parameters, where the true centers $\br{\theta_1^*,\ldots,\theta^*_k} \in\mathr^{p\times k}$ are separated by at least $\Delta$, and  the true cluster assignment $z^*$ has minimum cluster size $\beta n/k$. 

When $\Delta \rightarrow \infty$, $p=o(n\Delta)$ and $k$ and $\beta$ are constants, the convergence rate in \eqref{eqn:main} matches the minimax lower bound (\ref{eqn:minimax}) up to a $(1+o(1))$ factor in the exponent. Moreover, when additionally $\liminf_{n\rightarrow\infty} \Delta^2/(8\log n)>1$, $\hat z$ equals
$z^*$ with high probability and we achieve exact recovery. This sharply matches the exact recovery threshold  \cite{lu2016statistical, ChenYang20}.


Whereas $\Delta \rightarrow \infty$ is a necessary condition for consistent recovery \cite{lu2016statistical, Ndaoud19}, the condition in \eqref{eqn:Delta_assumption}  is not optimal. The assumption that $p=o(n\Delta)$ is an artifact of our proof technique. It can be improved to $p=o(n\Delta^2)$ under additional assumptions on the singular values of the population matrix $\mathbb{E}X$. 
When $n\Delta^2=o(p)$,  Algorithm \ref{alg:main} may only achieve  suboptimal convergence rates and we discuss the intuition behind this in Section \ref{sec:discfail}. The dependence on $k$ is suboptimal as well,  mainly due to higher order perturbation terms in our proof. In contrast, \cite{GiraudVerzelen19,fei2018hidden} only need to assume $kp=o(n\Delta^2)$ for their SDP relaxation of $k$-means to achieve exponential rates (but with a suboptimal constant in the exponent).

We emphasize that, as in Proposition \ref{prop:cons}, there is no spectral gap (i.e., singular value gap) condition assumed in Theorem \ref{thm:main}. 
It is possible that  the population matrix $\E X$ has a rank that is smaller than $k$, such that the smallest singular values of the population matrix $\E X$ are $0$ or near $0$. For instance, this occurs when some of the centers are (nearly) collinear. This is contrary to the existing literature   \cite{abbe2017entrywise, lei2015consistency,Jin15, rohe2011spectral}, where the spectral gap is assumed to be sufficiently large to apply spectral perturbation theory.
 The spectral gap condition is not natural, as the minimax rate in  (\ref{eqn:minimax}) only depends on $\Delta$ and is invariant to any spectral structure. In Theorem \ref{thm:main} 
 we completely drop any spectral gap condition, and our results match with the intuition that the difficulty of cluster recovery is determined only by $\Delta$, the minimum distance among the centers.

 \subsection{ 
 $(1+\varepsilon)$-solutions to k-means}\label{subsec:one_plus_epsilon}
 
 Computing the $k$-means objective in Algorithm \ref{alg:main} has complexity $O(n^{k^2+1})$ \cite{InabaKatohImai94} and quickly becomes impractical, even for moderate values of $k$. A potential alternative is to use an $(1+\varepsilon)$-solution. An  $(1+\varepsilon)$-solution is a pair $(\tilde z, \{ \tilde c_j \}_{j=1}^k)$, such that its $k$-means objective value is within a factor of $(1+\varepsilon)$ of the global minimum of the $k$-means objective.
 For instance, \cite{KumarSabharwalSen04} proposed an $(1+\varepsilon)$-approximation algorithm with complexity $O(2
^{(k/\varepsilon)^{O(1)}}n)$, which is linear in $n$ when $k$ is constant and polynomial in $n$ as long as $k$ grows sublogarithmically in $n$. Proposition \ref{prop:cons} is still valid when an $(1+\varepsilon)$-solution is used.  However, $(1+\varepsilon)$-solutions do not necessarily enjoy a {\em local} optimality guarantee for the estimated labels, i.e., $\|\hat Y_i-\tilde c_{\tilde z_i} \| \leq \| \hat Y_i-\tilde c_j\|, \forall i\in[n], j \neq \tilde z_i$, which is required in the proof of Theorem \ref{thm:main}.
To overcome this problem, we propose to run an extra one step Lloyd's algorithm \cite{Lloyd82} as described in Algorithm \ref{alg:1+eps}.
Consequently, the statement of Theorem \ref{thm:main} still holds for Algorithm \ref{alg:1+eps}, which we present below in Theorem \ref{thm:1_epsilon_approx}.


\begin{algorithm}[h]
\SetAlgoLined
\KwIn{Data matrix $X \in\mathr^{p\times n}$, number of clusters $k$, approximation level $\varepsilon$}
\KwOut{Clustering assignment vector $\tilde z\in [k]^n$}
 \nl Implement Steps 1-2 of Algorithm \ref{alg:main} to obtain $\hat Y \in\mathr^{k\times n}$\;
 \nl  Compute a $(1+\varepsilon)$-solution (e.g., \cite{KumarSabharwalSen04}) for the $k$-means algorithm on the columns  of $\hat Y $ 
 and return $(\check z, \{ \check c_j\}_{j=1}^k)$, the cluster assignment vector and centers, such that
 \begin{align*}
\sum_{i\in[n]}  \norm{\hat Y_\cd{i} - \check c_{\check z_i}}^2  \leq (1+\varepsilon) \inf_{ \{ c_j\}_{j=1}^k \in \mathbb{R}^k} \sum_{i\in[n]} \min_{j \in [k]} \norm{\hat Y_\cd{i} - c_{j}}^2 
 \end{align*}
  \nl Update the centers
 \begin{align*}
    \tilde c_j = \frac{\sum_{i\in[n]} \hat Y_\cd{i} \indic{\check z_i = j}}{\sum_{i\in[n]} \indic{\check z_i = j}},
     ~~~~~~j=1, \dots, k.
 \end{align*}
 \nl Update the labels 
 \begin{align*}
     \tilde z_i=\argmin_{j \in [k]} \| \hat Y_\cd{i} - \tilde c_j \|, ~~~~~~i=1, \dots, n.
 \end{align*}
\caption{Spectral Clustering with $(1+\varepsilon)$-solution}\label{alg:1+eps}
\end{algorithm}

\begin{theorem}\label{thm:1_epsilon_approx}
Assume that \begin{align*}
\frac{\Delta}{k^{10.5}\beta^{-0.5}(1+\frac{p}{n})\left ({\frac{n-k}{n}} \right )^{-0.5} (1+\varepsilon)^{0.5}} \rightarrow \infty
\end{align*} holds. Then the output of Algorithm \ref{alg:1+eps}, $\tilde z$, satisfies
\begin{align} \label{thm:1_eps_exp}
 \ell(\tilde z,z^*)  \leq \ebr{ - \br{1- \br{\frac{\Delta}{k^{10.5}\beta^{-0.5}(1+\frac{p}{n})^{}\left ({\frac{n-k}{n}} \right )^{-0.5}(1+\varepsilon)^{0.5}}}^{-0.1}} \frac{\Delta^2}{8}}
\end{align}
with probability at least $1-\ebr{-\Delta}-3nk\ebr{-0.08(n-k)}$.
\end{theorem}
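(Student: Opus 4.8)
The plan is to reduce Theorem~\ref{thm:1_epsilon_approx} to Theorem~\ref{thm:main}. As the text already notes, the proof of Theorem~\ref{thm:main} uses the exact $k$-means optimizer $(\hat z,\{\hat c_j\})$ only through two features: (i) the nearest-center property $\norm{\hat Y_\cd{i}-\hat c_{\hat z_i}}\le\norm{\hat Y_\cd{i}-\hat c_j}$ for all $i\in[n]$, $j\in[k]$; and (ii) the oracle comparison $\sum_{i\in[n]}\norm{\hat Y_\cd{i}-\hat c_{\hat z_i}}^2\le\sum_{i\in[n]}\norm{\hat Y_\cd{i}-\bar Y_{z^*_i}}^2$, where $\bar Y_j$ denotes the average of the columns $\hat Y_\cd{i}$ over $\{i:z^*_i=j\}$. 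I would show that the output $(\tilde z,\{\tilde c_j\})$ of Algorithm~\ref{alg:1+eps} satisfies (i) exactly and (ii) up to a factor $(1+\varepsilon)$, and that this suffices to rerun the proof of Theorem~\ref{thm:main} with the stated change in the constants.

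Property (i) is immediate, since Step~4 of Algorithm~\ref{alg:1+eps} sets $\tilde z_i=\argmin_{j\in[k]}\norm{\hat Y_\cd{i}-\tilde c_j}$. For the relaxed version of (ii), note that Steps~3 and~4 together form one Lloyd iteration, and neither half-step increases the $k$-means objective: with labels held at $\check z$, the within-cluster means $\tilde c_j$ minimize $\sum_i\norm{\hat Y_\cd{i}-c_{\check z_i}}^2$ over centers, and then reassigning each column to its nearest center among $\{\tilde c_j\}$ minimizes over labels. Combining this with the defining inequality of the $(1+\varepsilon)$-solution and oracle feasibility,
\begin{align*}
\sum_{i\in[n]}\norm{\hat Y_\cd{i}-\tilde c_{\tilde z_i}}^2
&\le\sum_{i\in[n]}\norm{\hat Y_\cd{i}-\check c_{\check z_i}}^2
\le(1+\varepsilon)\inf_{\{c_j\}_{j=1}^k\in\mathr^k}\sum_{i\in[n]}\min_{j\in[k]}\norm{\hat Y_\cd{i}-c_j}^2 \\
&\le(1+\varepsilon)\sum_{i\in[n]}\norm{\hat Y_\cd{i}-\bar Y_{z^*_i}}^2 .
\end{align*}
In addition, Proposition~\ref{prop:cons}, which as stated in the text remains valid for $(1+\varepsilon)$-solutions, gives $\ell(\check z,z^*)\le C'k(1+p/n)/\Delta^2=o(1)$ under the hypothesis of Theorem~\ref{thm:1_epsilon_approx}; since one extra Lloyd step starting from a consistent labeling stays consistent, this provides the warm start that pins down the label permutation $\phi\in\Phi$ used throughout the argument.

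It then remains to retrace the proof of Theorem~\ref{thm:main}, carrying the factor $(1+\varepsilon)$ through each invocation of the oracle comparison (ii). Because the quantities actually controlled are Euclidean distances between columns of $\hat Y$ and the estimated centers -- not their squares -- a factor $(1+\varepsilon)$ on squared objectives turns into a factor $(1+\varepsilon)^{1/2}$ on the governing deviation bounds; this is precisely why $(1+\varepsilon)^{0.5}$ appears in the separation condition and in the $o(1)$ correction to the exponent in \eqref{thm:1_eps_exp}, while the leading term $\Delta^2/8$ -- which originates from the Gaussian tail probability of a single misassigned point and is independent of the optimization accuracy -- is unchanged. I expect the main obstacle to be bookkeeping rather than any new idea: one must verify that $(1+\varepsilon)$ enters only multiplicatively inside the effective signal-to-noise ratio $\Delta/(k^{10.5}\beta^{-0.5}(1+p/n)((n-k)/n)^{-0.5})$ and never multiplies the $(1-o(1))$ prefactor of $\Delta^2/8$, i.e. that the perturbation control of $\tilde c_{\phi(j)}-\hat U^T\theta^*_j$ and the concentration of the per-point noise projections $\hat U^T\epsilon_i$ degrade by at most the claimed amount, leaving the probability bound $1-\ebr{-\Delta}-3nk\ebr{-0.08(n-k)}$ intact.
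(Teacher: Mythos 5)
Your proposal follows essentially the same route as the paper's proof of Theorem~\ref{thm:1_epsilon_approx}: Lloyd's step supplies both the nearest-center property and the within-cluster-mean formula for $\tilde c_j$ (with respect to $\check z$, not $\tilde z$), the $(1+\varepsilon)$-approximation inequality propagates into the consistency bounds of Lemma~\ref{lem:theta_dist} with extra factors $(1+\varepsilon)$ and $(1+\varepsilon)^{1/2}$, and the rest of Theorem~\ref{thm:main}'s proof is rerun carrying $(1+\varepsilon)^{0.5}$ through the signal-to-noise ratio. The one caveat, which the paper spells out, is that your properties (i)--(ii) are not a complete inventory of what exact $k$-means optimality supplies: the argument also needs the centers to be cluster means of a consistent labeling (so that $\langle\hat u_l,\tilde\theta_j\rangle$ can be bounded for $l>r$) together with a lower bound on that labeling's cluster sizes, and since the averaging in Step~3 is taken over $\check z$, you must establish $|\{i:\check z_i=j\}|\geq \beta n/(2k)$ via the $(1+\varepsilon)$-version of Lemma~\ref{lem:theta_dist} applied to $\check z$.
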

The proof of Theorem \ref{thm:1_epsilon_approx} is almost identical to that of  Theorem \ref{thm:main} and we sketch the necessary modifications in Appendix \ref{apx:1_eps}. 




 \section{Discussion}  \label{sec:disc}
\subsection{Unknown Covariance Matrix \& Sub-Gaussian errors}\label{subsec:epsilon_distribution}

The consistency guarantee established in Proposition  \ref{prop:cons} can be  extended to more general settings where the noise variables $\{\epsilon_i \}_{i=1}^n$ have covariance matrix $\Sigma$ or are sub-Gaussian.  We include this extension in Appendix \ref{apx:prop_cons_extension} as Proposition \ref{prop:cons_extension}. 

In contrast, it is not possible to extend Theorem \ref{thm:main} and Theorem \ref{thm:1_epsilon_approx} to either sub-Gaussian distributed errors or unknown covariance matrices with our current proof techniques. This is due to the fact that the proof is highly reliant on both the isoperimetric inequality (c.f., (\ref{eqn:isoperimetric})) and rotation invariance of the singular vectors of the noise matrix $(\epsilon_1, \dots, \epsilon_n)$ (as in Lemma \ref{lem:hat_V_normal}). An isoperimetric inequality would also be fulfilled by strongly log-concave distributed errors \cite{OttoVillani00}. On the other hand, rotation invariance of the singular vectors of $(\epsilon_1, \dots, \epsilon_n)$ is equivalent to  $\epsilon_i$ being spherically Gaussian distributed.

\subsection{Unknown $k$}  Algorithm \ref{alg:main} and Theorem \ref{thm:main}  require that the number of clusters, $k$, is known. In practice, $k$ might be unknown and might need to be estimated. For this purpose, several approaches have been developed, including cross-validation \cite{Wang10}, the gap-statistic \cite{TibshiraniWaltherHastie01}, eigenvalue based heuristics \cite{von2007tutorial} and resampling strategies \cite{MontiTamayoMesirovGolub03}. However, while these methods often work well empirically, their theoretical performances are not fully understood, especially in high-dimensional regimes with growing $p$ and $n$. One may estimate $k$ by the aforementioned methods and use the resulting estimate in Algorithm \ref{alg:main}, but further investigation is beyond the scope of this paper.


\subsection{Parameter Regime $n\Delta^2=O(p)$}\label{sec:discfail} Proposition \ref{prop:cons} and Theorems \ref{thm:main} and \ref{thm:1_epsilon_approx} are limited to the parameter regimes $p=o(n\Delta^2)$ and $p=o(n\Delta)$, respectively, beyond which the performance of Algorithm \ref{alg:main} remains unclear. In the high-dimensional setting, where $p$ satisfies $n\Delta^2=O(p)$ and $p=o(n\Delta^4)$,  \cite{AbbeFanWang20} considers a simplified model where $X_i=z^*_i \theta^*+\epsilon_i$ with $z^*\in \{-1,1\}^n$ and shows that spectral clustering performed with one singular vector achieves the optimal misclustering rate. Nevertheless, spectral clustering should be used with caution in the high-dimensional setting. In particular, when $n\Delta^2=o(p)$, Theorem 2.2 in \cite{Ding20} indicates that, in general, the leading empirical singular values are all equal to $(1+o(1))(\sqrt{n}+\sqrt{p})$. As a result, running $k$-means on $\hat Y$ in Algorithm \ref{alg:main} is the same as on $\hat V$, and its performance may also depend on the structure of the population singular values \cite{HanTongFan20}. Thus, when the signal is weak compared to the dimensionality of the data, i.e.,  $n\Delta^2 = o(p)$, one may consider using alternative clustering methods such as SDP relaxations of $k$-means \cite{PengWei07,Royer17,GiraudVerzelen19,fei2018hidden}. However, rate optimal estimation is not guaranteed.

\subsection{Adaptive Dimension Reduction} The population matrix $(\theta^*_{z^*_1}, \dots, \theta^*_{z^*_n})$ might have  smaller rank than $k$. For instance, when the centers are collinear, the rank of the population matrix equals 1. Hence, in such cases it is conceivable to use a smaller number of singular vectors in Algorithm \ref{alg:main}, as this further reduces the computational burden of computing the $k$-means objective. One way to achieve this, while still retaining the theoretical guarantees of Theorem \ref{thm:main}, is to use the leading $\hat r$ singular vectors for the projection Step 2 in Algorithm \ref{alg:main}, where  $\hat r$ is an empirical version of $r$ defined in \eqref{def:r}. This pre-selection step keeps all the informative singular vectors without involving the noisy part of the projected data corresponding to small population singular values and allows to shorten the proof of Theorem \ref{thm:main}. On the other hand, estimating $r$ requires the noise level to be known or to be estimated, which adds additional computational complexity and introduces an additional tuning parameter.

\section{Proof of Main Results} \label{sec:proof}
In Section \ref{subsec:population}, we first introduce the population counterparts of the quantities appearing in Algorithm \ref{alg:main}. After that, several key lemmas for the proof are presented in Section \ref{subsec:lemma}. Since the proof of  Theorem \ref{thm:main}  is long and involved, we provide a proof sketch  in Section \ref{subsec:sketch}, followed by its complete  and detailed proof in Section \ref{subsec:proof}. Auxiliary lemmas are included in the supplement.

\subsection{Population Quantities} \label{subsec:population}
We define $P = \E X$ and  $E=\br{\epsilon_1,\ldots, \epsilon_n}\in\mathr^{p\times n}$, 
such that we have the  matrix representation  $X=P+E$.
We define several quantities related to $P$, the population version of $X$. We denote the SVD of $P$  (note that $P$ is at most rank of $k \wedge p$)
\begin{align*}
P =\sum_{i=1}^{k} \sigma_i u_i v_i^T=U\Sigma V^T
\end{align*}
where $\sigma_1 \geq \sigma_2 \geq \ldots \geq \sigma_{k} \geq 0$, $\Sigma=\text{diag}(\sigma_1, \dots, \sigma_k),$ 
$U = \br{u_1,\ldots, u_k}\in\mathr^{p\times k}, ~V = \br{ v_1,\ldots,  v_k} \in\mathr^{n\times k}.$ 
 Moreover, we define 
\begin{align*}
 Y =  U^TP=\Sigma  V^T\in\mathr^{k\times n}.
\end{align*}
In Appendix \ref{subsec:support_population}, we provide several propositions (Propositions \ref{prop:V_form}, \ref{prop:V_pop} and \ref{prop:iprod_theta_u}) to characterize the structure of these population quantities. 

\subsection{Key Lemmas}\label{subsec:lemma}
In this section, we present several key lemmas used in the proof of Theorem \ref{thm:main}.

\begin{algorithm}[h]
\SetAlgoLined
\KwIn{Data matrix $X\in\mathr^{p\times n}$, number of clusters $k$}
\KwOut{Clustering assignment vector $\hat z'\in [k]^n$}
 \nl Implement Steps 1-2 of Algorithm \ref{alg:main} to obtain $\hat \Sigma\in\mathr^{k\times k} ,~\hat V\in\mathr^{n\times k}$ and $\hat U\in\mathr^{p\times k}$. In addition, define 
 \begin{align*}
 \hat P =\hat U \hat \Sigma \hat V^T\in\mathr^{p\times n}.
 \end{align*}
  \nl
Perform $k$-means on the columns of $\hat P$ 
and return the estimated clustering assignment vector $\hat z'$ and estimated centers $\{\hat \theta_j\}_{j=1}^k$, i.e., 
 \begin{align}
 \br{\hat z',\cbr{\hat \theta_j}_{j=1}^k} = \argmin_{ z\in [k]^n , \cbr{ \theta_j}_{j=1}^{k} \in \mathr^{k}} \sum_{i\in[n]}\norm{\hat P_\cd{i} - \theta_{z_i}}^2.\label{eqn:kmeans_P}
 \end{align}
\caption{Clustering with rank-$k$ approximation}\label{alg:rankk}
\end{algorithm}

In Lemma \ref{lem:equivalence}, we show that Algorithm \ref{alg:main}  has the same output as Algorithm \ref{alg:rankk}, where clustering is performed on the columns of $\hat U \hat Y$ instead of $\hat Y$. 
We defer its proof to the supplement.
\begin{lemma}\label{lem:equivalence}
Denote by $(\hat z,\{\hat c_j\}_{j=1}^k)$ and $ (\hat z',\{\hat \theta_j\}_{j=1}^k)$ the outputs of Algorithm \ref{alg:main} and Algorithm \ref{alg:rankk}, respectively. 
Then, after a label permutation,  $\hat z$ equals $\hat z'$, i.e., there exists a $\phi \in\Phi$ such that
\begin{align*}
\hat z'_i = \phi(\hat z_i),\forall i\in[n].
\end{align*}
In addition, we have that 
\begin{align*}
\hat \theta_j = \hat U \hat c_{\phi(j)},\forall j\in[k].
\end{align*}
\end{lemma}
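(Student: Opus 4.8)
The key observation is that $\hat Y = \hat U^T X$ and $\hat P = \hat U \hat Y$, so that the columns of $\hat P$ are obtained from the columns of $\hat Y$ by applying the \emph{linear isometry} $\hat U : \mathbb{R}^k \to \mathbb{R}^p$. Since $\hat U$ has orthonormal columns, $\|\hat U a - \hat U b\| = \|a - b\|$ for all $a,b \in \mathbb{R}^k$, and more importantly, for the two $k$-means objectives in \eqref{eqn:kmeans_Y} and \eqref{eqn:kmeans_P} I want to show they are essentially the same optimization problem up to this isometry.

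\textbf{Plan.} First I would reduce the $k$-means objective in \eqref{eqn:kmeans_P} to one over the column space of $\hat U$: given any candidate centers $\{\theta_j\}_{j=1}^k \subset \mathbb{R}^p$, replacing each $\theta_j$ by its orthogonal projection $\hat U \hat U^T \theta_j$ onto $\text{col}(\hat U)$ can only decrease $\sum_i \|\hat P_{\cdot,i} - \theta_{z_i}\|^2$, because each column $\hat P_{\cdot,i} = \hat U \hat Y_{\cdot,i}$ already lies in $\text{col}(\hat U)$ and projection is the nearest-point map onto that subspace; by Pythagoras, $\|\hat P_{\cdot,i} - \theta_j\|^2 = \|\hat P_{\cdot,i} - \hat U\hat U^T\theta_j\|^2 + \|(I - \hat U\hat U^T)\theta_j\|^2$. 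Hence the minimization over $\theta_j \in \mathbb{R}^p$ in \eqref{eqn:kmeans_P} has the same value, and the same set of minimizing label vectors, as the minimization restricted to $\theta_j = \hat U c_j$ with $c_j \in \mathbb{R}^k$. Writing $\theta_j = \hat U c_j$ and using $\|\hat U \hat Y_{\cdot,i} - \hat U c_{z_i}\|^2 = \|\hat Y_{\cdot,i} - c_{z_i}\|^2$, the objective in \eqref{eqn:kmeans_P} becomes exactly the objective in \eqref{eqn:kmeans_Y}.

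\textbf{Concluding the identification.} From the previous step, $(\hat z', \{\hat\theta_j\})$ solves \eqref{eqn:kmeans_P} if and only if, writing $\hat\theta_j = \hat U \hat c_j'$ (which is forced up to the irrelevant orthogonal complement, and we may take the representative in $\text{col}(\hat U)$), the pair $(\hat z', \{\hat c_j'\})$ solves \eqref{eqn:kmeans_Y}. Since $(\hat z, \{\hat c_j\})$ is \emph{a} minimizer of \eqref{eqn:kmeans_Y}, the label vectors $\hat z$ and $\hat z'$ both minimize the same functional; a $k$-means solution is unique only up to relabeling of the $k$ clusters (the objective is invariant under $z \mapsto \phi(z)$, $c_j \mapsto c_{\phi^{-1}(j)}$ for $\phi \in \Phi$), so there exists $\phi \in \Phi$ with $\hat z_i' = \phi(\hat z_i)$ for all $i$, and correspondingly $\hat\theta_j = \hat U \hat c_{\phi(j)}$. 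If one wants to avoid assuming uniqueness-up-to-relabeling, one can instead argue directly: the algorithm's output is defined by the argmin, and the bijection between minimizers of the two problems via $c \leftrightarrow \hat U c$ together with the cluster-relabeling symmetry gives the claim.

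\textbf{Main obstacle.} The only subtlety is bookkeeping around non-uniqueness of the $k$-means minimizer and the role of the permutation $\phi$: strictly speaking \eqref{eqn:kmeans_Y} and \eqref{eqn:kmeans_P} need not have unique solutions, so the statement "$\hat z$ equals $\hat z'$ after a label permutation" must be read as "for the chosen outputs, there exists $\phi$"; one should phrase the correspondence so that any minimizer of one problem maps to a minimizer of the other, and then note that both chosen minimizers, having the same partition up to relabeling of clusters, differ by some $\phi \in \Phi$. Everything else — the projection/Pythagoras argument and the isometry identity — is routine. I expect no analytic difficulty; the care is purely in stating the correspondence cleanly.
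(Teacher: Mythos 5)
Your argument is correct and is essentially the paper's own proof: both reduce \eqref{eqn:kmeans_P} to \eqref{eqn:kmeans_P} over $\mathrm{col}(\hat U)$ (the projection/Pythagoras step) and then use that $\hat U$ has orthonormal columns to identify the resulting objective with \eqref{eqn:kmeans_Y}. Your extra remark on non-uniqueness of the $k$-means minimizer is a real but benign subtlety the paper leaves implicit; it does not change the argument.
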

In Lemma \ref{lem:theta_dist}, we show consistency of Algorithm  \ref{alg:rankk} on the following event
\begin{align}\label{eqn:F_def}
\mathcal{F} = \cbr{\opnorm{E}\leq \sqrt{2}(\sqrt{n}+\sqrt{p})}.
\end{align} which occurs with high probability (as proven in Lemma \ref{lem:E_opnorm}).  


\begin{lemma}\label{lem:theta_dist}
Assume that the  event $\mathcal{F}$  holds~
and that $\Delta/(\beta^{-0.5} k\br{1+p/n}^{0.5}) \geq C$
for some constant $C>0$. Then there exists another constant $C'$ such that the output of Algorithm \ref{alg:rankk} $ (\hat z',\{\hat \theta_j\}_{j=1}^k)$ satisfies
\begin{align}
&\ell(\hat z',z^*)\leq  \frac{C'k\br{1+\frac{p}{n}}}{\Delta^2},\label{eqn:Lemma_4.2_1}\\
\text{and }\quad &\min_{\phi\in \Phi}\max_{j\in[k]} \norm{\hat \theta_j  - \theta^*_{\phi(j)}} \leq C'\beta^{-\frac{1}{2}}k\sqrt{1+\frac{p}{n}}.\label{eqn:theta_dist} 
\end{align}
Consequently, if the ratio $\Delta/(\beta^{-0.5} k\br{1+p/n}^{0.5})$
is sufficiently large, we have that  $\min_{j\in[k]} \abs{\cbr{i\in[n]:\hat z_i = j}} \geq \frac{\beta n}{2k}$.
\end{lemma}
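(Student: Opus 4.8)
The plan is to reduce the lemma to a spectral‑perturbation bound on $\hat P$ followed by a deterministic counting argument for the $k$‑means solution of Algorithm~\ref{alg:rankk}. First I bound $\fnorm{\hat P-P}$: since $\hat P$ is by construction the best rank‑$k$ approximation of $X=P+E$ and $P$ has rank at most $k$, we have $\opnorm{\hat P-X}\le\opnorm{X-P}=\opnorm{E}$, hence $\opnorm{\hat P-P}\le 2\opnorm{E}$; as $\hat P-P$ has rank at most $2k$, on the event $\mathcal F$ this gives $\fnorm{\hat P-P}\le\sqrt{2k}\,\opnorm{\hat P-P}\le C_1\sqrt{k}\br{\sqrt{n}+\sqrt{p}}$, i.e.\ $\fnorm{\hat P-P}^2\lesssim k(n+p)$. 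Then I invoke the optimality in \eqref{eqn:kmeans_P}: plugging in the feasible point $(z^*,\cbr{\theta^*_j}_{j=1}^k)$ yields $\fnorm{\hat P-\Theta}^2\le\fnorm{\hat P-P}^2$ for $\Theta:=\br{\hat\theta_{\hat z'_1},\dots,\hat\theta_{\hat z'_n}}$, so by the triangle inequality $\delta^2:=\fnorm{\Theta-P}^2\le 4\fnorm{\hat P-P}^2\lesssim k(n+p)$. The workhorse for both \eqref{eqn:Lemma_4.2_1} and \eqref{eqn:theta_dist} will be that, writing $G_l=\cbr{i:z^*_i=l}$, one has $\sum_{i\in G_l}\norm{\hat\theta_{\hat z'_i}-\theta^*_l}^2\le\delta^2$ for every $l$.

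The crux is to build a bijection $\phi\in\Phi$ matching true to estimated labels. By Markov's inequality, at most $T:=9\delta^2/\Delta^2$ indices of $G_l$ satisfy $\norm{\hat\theta_{\hat z'_i}-\theta^*_l}>\Delta/3$; the separation assumption makes $T\asymp\beta n/(C^2k)$, which for $C$ large is strictly below $\abs{G_l}\ge\beta n/k$, so $G_l$ contains an index with $\norm{\hat\theta_{\hat z'_i}-\theta^*_l}\le\Delta/3$. Let $\phi(l)$ be the most frequent value of $\hat z'_i$ among such indices, so $\norm{\hat\theta_{\phi(l)}-\theta^*_l}\le\Delta/3$. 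Since $\cbr{\theta^*_l}$ is $\Delta$‑separated, $\hat\theta_{\phi(l)}$ is then at distance $\ge 2\Delta/3$ from every $\theta^*_{l'}$ with $l'\ne l$; in particular $\phi(l)=\phi(l')$ would force $\norm{\theta^*_l-\theta^*_{l'}}\le 2\Delta/3<\Delta$, so $\phi$ is injective, hence a bijection on $[k]$.

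With $\phi$ fixed, the two conclusions follow. If $\hat z'_i\ne\phi(z^*_i)$, write $\hat z'_i=\phi(l'')$ with $l''\ne z^*_i$ (possible since $\phi$ is onto); then $\hat\theta_{\hat z'_i}$ lies within $\Delta/3$ of $\theta^*_{l''}$, hence at distance $>\Delta/3$ from $\theta^*_{z^*_i}$. Summing, $(\Delta/3)^2\abs{\cbr{i:\hat z'_i\ne\phi(z^*_i)}}\le\delta^2$, giving $\ell(\hat z',z^*)\le 9\delta^2/(n\Delta^2)\le C'k(1+p/n)/\Delta^2$, which is \eqref{eqn:Lemma_4.2_1}. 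For \eqref{eqn:theta_dist}, the number of $i\in G_l$ with $\hat z'_i=\phi(l)$ is at least $\abs{G_l}-T\ge\beta n/(2k)$ (again for $C$ large), and combined with $\abs{\cbr{i\in G_l:\hat z'_i=\phi(l)}}\norm{\hat\theta_{\phi(l)}-\theta^*_l}^2\le\delta^2$ this gives $\norm{\hat\theta_{\phi(l)}-\theta^*_l}^2\le 2k\delta^2/(\beta n)\lesssim k^2(1+p/n)/\beta$, so $\min_{\psi\in\Phi}\max_j\norm{\hat\theta_j-\theta^*_{\psi(j)}}\le\max_l\norm{\hat\theta_{\phi(l)}-\theta^*_l}\lesssim\beta^{-1/2}k\sqrt{1+p/n}$. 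Finally, by \eqref{eqn:Lemma_4.2_1} the number of misclustered indices is at most $C'nk(1+p/n)/\Delta^2$, which is below $\beta n/(2k)$ once $\Delta/(\beta^{-0.5}k(1+p/n)^{0.5})>\sqrt{2C'}$; since $\cbr{i:\hat z'_i=\phi(l)}$ contains $G_l$ minus the misclustered indices it has size $\ge\beta n/k-\beta n/(2k)=\beta n/(2k)$, and $\phi$ being onto, every estimated cluster is at least this large.

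The step I expect to be the main obstacle is the construction of $\phi$ in the second paragraph: there is an apparent circularity, since "$\hat\theta_j$ is near the right $\theta^*_l$" and "label $j$ is assigned to the right points" seem to presuppose each other. The resolution is the quantitative gap between the minimal cluster size $\beta n/k$ and the Markov threshold $T\asymp\beta n/(C^2k)$ coming from the separation assumption, which guarantees each true cluster hosts a "clean" index from which $\phi$ can be read off; $\Delta$‑separation then promotes this pointwise information to a genuine bijection, after which the bound on $\fnorm{\Theta-P}^2$ closes everything.
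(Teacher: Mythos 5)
Your proof is correct and follows essentially the same route as the paper's: you bound $\fnorm{\hat P - P}$ via the low-rank structure of $\hat P - P$, transfer this to $\fnorm{\hat\Theta - P}$ using the $k$-means optimality, and then use Markov plus $\Delta$-separation to construct the matching permutation and deduce both the misclustering count and the center-estimation error. The only cosmetic difference is the construction of $\phi$: the paper defines $C_j = \{i : z^*_i = j,\ i\notin S\}$ and observes that the sets $\{\hat z'_i : i\in C_j\}$ are pairwise disjoint and nonempty, hence singletons; you instead pick $\phi(l)$ as the most frequent clean label, which is a minor detour but closes the same way (all clean indices in $G_l$ are forced to share the label $\phi(l)$ by the $\Delta/3$ argument, which is exactly what you need for the count $|\{i\in G_l:\hat z'_i=\phi(l)\}|\geq|G_l|-T$).
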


The proof of Lemma \ref{lem:theta_dist} is included in  the supplement.
The results of Lemma \ref{lem:E_opnorm}, Lemma  \ref{lem:theta_dist} and Lemma \ref{lem:equivalence} immediately imply Proposition \ref{prop:cons}.
%

Lemma \ref{lem:VV_pert} studies the difference between the empirical spectral projection matrix and its sample counterpart. It decomposes  $\hat V_{a:b} \hat V_{a:b}^T - V_{a:b}V_{a:b}^T$
into a linear part of the random noise matrix $E$ and a remaining part, which can be shown to be negligible.  The linear part has a simple form, and is the main component that leads to the exponent $\Delta^2/8$ in  (\ref{eqn:main}). The remaining  non-linear part, though without an explicit expression, is well-behaved and concentrates strongly around $0$. 
 Lemma \ref{lem:VV_pert} is a slight generalization of results due to  \citep{KoltchinskiiLouniciAAHP, KoltchinskiiXia15}, where $\sigma_a,\ldots, \sigma_b$ are assumed to be the same. 
 Here we relax this assumption, by allowing the corresponding singular values to vary. The proof of Lemma \ref{lem:VV_pert} is involved but mainly follows the line of arguments in \citep{KoltchinskiiLouniciAAHP, KoltchinskiiXia15}. We include the proof in the supplement for completeness.
\begin{lemma}\label{lem:VV_pert}
Consider any  rank-$k$ matrix $M\in\mathr^{p\times n}$ with SVD $M=\sum_{j=1}^k \sigma_j u_j v_j^T$ where $\sigma_1 \geq \sigma_2 \ldots \geq \sigma_k >0$. Define $\sigma_0 = +\infty $ and $ \sigma_{k+1} =0$.

Suppose that $E$ is a matrix with i.i.d. Gaussian entries,  $E_{i,j}
$. Define $\hat  M = M + E$ and suppose that $\hat  M$ has SVD $\sum_{j=1}^{p\wedge n} \hat  \sigma_j \hat u_j \hat v_j^T$ where $\hat \sigma_1 \geq \hat \sigma_2\geq \ldots \geq \hat \sigma_{p\wedge n}$. 
For any two indices $a,b$ such that $1\leq a\leq b\leq k$, define $V_{a:b}=\br{v_a,\ldots,v_b}$,  $\hat V_{a:b}=\br{\hat v_a,\ldots,\hat v_b}$ and  $V=\br{v_1,\ldots, v_k}$. Moreover, define the singular value gap $g_{a:b}= \min\cbr{\sigma_{a-1} -\sigma_a , \sigma_b -\sigma_{b+1}}$ and denote 
\begin{align*}
S_{a:b} = \br{I-VV^T}\br{\hat V_{a:b} \hat V_{a:b}^T - V_{a:b}V_{a:b}^T} V_{a:b} - \sum_{a\leq j\leq b}\frac{1}{\sigma_j}\br{I-VV^T}E^Tu_jv_j^TV_{a:b}.
\end{align*}
Suppose that $\E \opnorm{E} \leq \frac{g_{a:b}}{8}$. Then there exists some constant $C>0$ such that with probability at least $1-2e^{-t}$
\begin{align*}
{\abs{\iprod{S_{a:b} - \E S_{a:b} }{W}} \leq C  \left (1+\frac{\sigma_a-\sigma_b}{g_{a:b}} \right ) \frac{\sqrt{t}}{g_{a:b}} \left ( \frac{\sqrt{n+p}+\sqrt{t}}{g_{a:b}}\right ) \|W\|_* } 
\end{align*}
for any $W\in\mathr^{n\times \br{b-a}}$, any $t\geq \log 4$ and where $\| \cdot\|_*$ denotes the nuclear (Schatten-1) norm. 
\end{lemma}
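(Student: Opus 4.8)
The plan is to follow the perturbation-theoretic strategy of \citep{KoltchinskiiLouniciAAHP, KoltchinskiiXia15}, adapted to the case of unequal singular values on the block $[a,b]$. Recall that the spectral projector $\hat V_{a:b}\hat V_{a:b}^T$ onto the span of the empirical right singular vectors with indices in $[a,b]$ admits a Riesz/holomorphic functional-calculus representation as a contour integral of the resolvent $(\zeta I - \hat M^T\hat M)^{-1}$ around the relevant portion of the spectrum. Expanding this resolvent around $(\zeta I - M^TM)^{-1}$ yields a Neumann-type series in powers of the ``perturbation'' $M^TE + E^TM + E^TE$; the zeroth order term is $V_{a:b}V_{a:b}^T$, the first order term (after applying $I - VV^T$ on the left and $V_{a:b}$ on the right, which kills several contributions) reduces to $\sum_{a\le j\le b}\sigma_j^{-1}(I-VV^T)E^Tu_jv_j^TV_{a:b}$, and $S_{a:b}$ is by definition the sum of all remaining ($\ge 2$nd order) terms. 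The key structural point is that, because $(I-VV^T)$ annihilates the row space of $M$ on the right and the contour only encircles eigenvalues in $[a,b]$, each term in the series carries a factor $1/g_{a:b}$ from the contour length over the spectral-gap denominator, together with an extra factor $(\sigma_a - \sigma_b)/g_{a:b} + 1$ accounting for the spread of the singular values within the block (this is precisely where I depart from the equal-$\sigma$ case, which would have $\sigma_a = \sigma_b$ and hence no such factor).

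Concretely, I would first decompose $S_{a:b} = \sum_{r\ge 2} S_{a:b}^{(r)}$ where $S_{a:b}^{(r)}$ collects the order-$r$ terms, write each $S_{a:b}^{(r)}$ as a contour integral of a product of resolvents and copies of the perturbation, and bound $\|S_{a:b}^{(r)}\|_{\mathrm{op}}$ deterministically by something like $C^r (\|E\|_{\mathrm{op}}/g_{a:b})^r (1 + (\sigma_a-\sigma_b)/g_{a:b})$ on the event $\{\|E\|_{\mathrm{op}} \le g_{a:b}/8\}$, so that the tail of the series is geometric and dominated by the $r=2$ term. This reduces matters to controlling $\langle S_{a:b}^{(2)} - \E S_{a:b}^{(2)}, W\rangle$ plus a negligible higher-order remainder. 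For the fluctuation of the leading term, I would invoke Gaussian concentration: viewing $E\mapsto \langle S_{a:b}^{(2)}(E) - \E S_{a:b}^{(2)}(E), W\rangle$ as a function on $\mathbb{R}^{p\times n}$ equipped with the Frobenius/Euclidean metric, one shows it is Lipschitz with constant of order $(1 + (\sigma_a-\sigma_b)/g_{a:b})\, g_{a:b}^{-1}\big((\sqrt{n+p})/g_{a:b}\big)\|W\|_*$ — roughly, differentiating the quadratic-in-$E$ expression and using $\|E\|_{\mathrm{op}} \lesssim \sqrt{n+p}$ on a high-probability event plus $\E\|E\|_{\mathrm{op}} \le g_{a:b}/8$ — and then the Gaussian isoperimetric/concentration inequality gives a sub-Gaussian tail with the stated $\sqrt{t}$ dependence, while the $\sqrt{t}/g_{a:b}$ factor emerges from restricting to the event $\{\|E\|_{\mathrm{op}}\lesssim \sqrt{n+p}+\sqrt{t}\}$, which itself holds with probability $\ge 1 - e^{-t}$.

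The main obstacle I anticipate is making the Lipschitz-constant computation for $S_{a:b}^{(2)}$ both clean and sharp in the presence of the within-block spread of singular values: the contour-integral representation of the second-order term involves products like $(\zeta I - M^TM)^{-1} (\text{perturbation}) (\zeta I - M^TM)^{-1} (\text{perturbation}) (\zeta I - M^TM)^{-1}$, and after projecting with $I - VV^T$ and $V_{a:b}$ one must carefully track which resolvent factors land on the gap-controlled part of the spectrum (contributing $1/g_{a:b}$) versus which land on the block $[a,b]$ itself (contributing $1/\sigma_b \le$ something, but where the ratio $\sigma_a/\sigma_b$ or difference $\sigma_a - \sigma_b$ enters). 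Bounding the contour integral then requires estimating $\oint |\zeta - \sigma_j^2|^{-1}|d\zeta|$ over a contour of length $O(\sigma_a^2 - \sigma_b^2 + g_{a:b}\sigma_b)$, which is where the factor $1 + (\sigma_a - \sigma_b)/g_{a:b}$ is produced. I would handle this by choosing the contour to be a rectangle separating $\{\sigma_a^2,\dots,\sigma_b^2\}$ from the rest of the spectrum at distance $\sim g_{a:b}\sigma_b$, and carefully splitting the integral into the ``long sides'' (length $\sim \sigma_a^2 - \sigma_b^2$) and ``short sides'' (length $\sim g_{a:b}\sigma_b$); everything else is bookkeeping along the lines of \citep{KoltchinskiiXia15}, and since the excerpt explicitly grants that ``the proof of Lemma \ref{lem:VV_pert} is involved but mainly follows the line of arguments in \citep{KoltchinskiiLouniciAAHP, KoltchinskiiXia15},'' I would present the decomposition and the two concentration ingredients in detail and refer to those papers for the routine resolvent estimates.
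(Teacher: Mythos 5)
Your proposal deviates from the paper's proof at the very first technical choice, and the deviation introduces a genuine gap. You work with the resolvent of the Gram matrix $\hat M^T\hat M$, whose perturbation is $T:=M^TE+E^TM+E^TE$; the paper instead passes to the self-adjoint dilation $D(\hat M)=\begin{pmatrix}0&\hat M\\ \hat M^T&0\end{pmatrix}$, whose perturbation $D(E)$ is \emph{linear} in $E$ with $\|D(E)\|=\|E\|$. This distinction matters for convergence of the Neumann series: $\|T\|\lesssim\sigma_1\|E\|+\|E\|^2$, while the relevant gap in the squared spectrum between $\sigma_b^2$ and $\sigma_{b+1}^2$ is only of order $g_{a:b}\sigma_b$. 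So your expansion requires $\sigma_1\|E\|\lesssim g_{a:b}\sigma_b$, i.e.\ $\|E\|\lesssim g_{a:b}\sigma_b/\sigma_1$, which is strictly stronger than the hypothesis $\E\|E\|\le g_{a:b}/8$ whenever $\sigma_1\gg\sigma_b$ — exactly the regime the lemma is designed for (the whole point is to tolerate ill-conditioned blocks). The left projection $(I-VV^T)$ kills $M^TE$ only once, not in every resolvent factor of the higher-order terms, so it does not rescue the series. By contrast, with the dilation, the contour at distance $g_{a:b}/2$ from $[\sigma_a,\sigma_b]$ (and its mirror image near $[-\sigma_b,-\sigma_a]$) gives $\|R_M(\eta)\|\le 2/g_{a:b}$ uniformly, and the series converges whenever $\|E\|<g_{a:b}/4$; no dependence on $\sigma_1$ enters. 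You would need to rebuild your argument on the dilation to make this go through.

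A second, more localized issue is the concentration step. You propose to ``show the map $E\mapsto\langle S^{(2)}_{a:b}(E)-\E S^{(2)}_{a:b}(E),W\rangle$ is Lipschitz \ldots on a high-probability event'' and then apply Gaussian isoperimetry. But the Gaussian isoperimetric inequality requires a \emph{globally} Lipschitz function of the Gaussian vector; Lipschitzness only on an event does not directly yield a sub-Gaussian tail. The paper resolves this by first proving a local Lipschitz bound for all of $S_{a:b}(\cdot)$ (not just the quadratic part) in its Lemma on Lipschitz continuity, and then introducing the smooth cutoff $\phi$ and the globally Lipschitz surrogate $h_\delta(E)=\langle S_{a:b}(E),W\rangle\,\phi(6\|E\|/\delta)$ with $\delta=\delta(t)=\E\|E\|+\sqrt{2t}$. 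Isoperimetry is applied to $h_\delta$, concentration around the median is transferred to concentration around the mean, and $h_\delta$ is shown to coincide with $\langle S_{a:b},W\rangle$ on the event $\{\|E\|\le\delta(t)\}$. Your sketch needs this truncation mechanism made explicit, and it is cleaner to apply it to the full remainder $S_{a:b}$ rather than to single out $S^{(2)}_{a:b}$ and argue the higher orders are negligible, since those higher orders also fluctuate and contribute to the bound.
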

The next lemma, Lemma \ref{lem:hat_V_normal}, characterizes the distribution of empirical singular vectors. Similar to Lemma \ref{lem:VV_pert}, Lemma \ref{lem:hat_V_normal} holds for matrices with any underlying structure, not necessarily in the clustering setting, as long as the noise is Gaussian distributed. The most important implication of Lemma \ref{lem:hat_V_normal} is that, for any empirical singular vector $\hat v_j$, its component that is orthogonal to the true signal $V$ (i.e., $(I-VV^T)\hat v_j$) is after normalization haar distributed on the sphere spanned by $(I-VV
^T)$.  This observation appears and is utilized in \citep{johnstone2018pca, paul2007asymptotics}. Lemma  \ref{lem:hat_V_normal}  is essentially the same as Theorem 6 of \citep{paul2007asymptotics}. For completeness, we give the proof in  the supplement.

\begin{lemma}\label{lem:hat_V_normal}
Consider a rank-$k$ matrix $M\in\mathr^{p\times n}$ with SVD $M=\sum_{j=1}^k \sigma_j u_j v_j^T$ where $\sigma_1 \geq \sigma_2 \ldots \geq \sigma_k >0$. 
Suppose that $E$ is a matrix with i.i.d. Gaussian entries, $E_{i,j}\iid\mathn(0,1)$. Define $\hat  M = M + E$ and suppose that $\hat  M$ has SVD $\sum_{j=1}^{p\wedge n} \hat  \sigma_j \hat u_j \hat v_j^T$ where $\hat \sigma_1 \geq \hat \sigma_2\geq \ldots \geq \hat \sigma_{p\wedge n}$. 
Define $V = \br{v_1,\ldots, v_k}$. Then for any $j\in[k]$, the following holds:
\begin{enumerate}
\item $\br{I - VV^T} \hat v_j / \norm{ \br{I - VV^T} \hat v_j}$ is uniformly distributed on the unit sphere spanned by $\br{I-VV^T}$, i.e., 
\begin{align*}
\frac{\br{I - VV^T} \hat v_j }{\norm{ \br{I - VV^T} \hat v_j}}  \overset{d}{=} \frac{\br{I - VV^T} w }{\norm{ \br{I - VV^T} w}},\text{ where }w\sim\mathn(0,I_n)
\end{align*}
and where $\overset{d}{=}$ denotes equality in distribution. 
In particular, we have that  $$\E \frac{\br{I - VV^T} \hat v_j }{ \norm{ \br{I - VV^T} \hat v_j}} =0.$$
\item $\br{I-VV^T}\hat v_j/\norm{\br{I-VV^T}\hat v_j}$ is independent of $VV^T\hat v_j$.
\item $\br{I-VV^T}\hat v_j/\norm{\br{I-VV^T}\hat v_j}$ is independent of $\norm{ \br{I - VV^T} \hat v_j}$.
\end{enumerate}
\end{lemma}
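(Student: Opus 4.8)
\textbf{Proof proposal for Lemma \ref{lem:hat_V_normal}.}

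The plan is to exploit the rotational invariance of the standard Gaussian matrix $E$ under orthogonal transformations acting on the column space, combined with the fact that $M$ has rank $k$ with column space spanned by $U$. First I would fix an orthogonal matrix $O\in\mathr^{p\times p}$ of the block form $O = \begin{pmatrix} I_k & 0 \\ 0 & R\end{pmatrix}$ in a basis adapted to $\tspan{U}$, where $R$ is an arbitrary orthogonal transformation of the orthogonal complement of $\tspan{U}$. Since $OU = U$ (the singular vectors $u_j$ lie in the fixed block), we have $OM = M$, while $OE \dist E$ by rotation invariance of i.i.d. Gaussians. Hence $O\hat M = OM + OE \dist M + E = \hat M$. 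Now the left singular vectors transform as $\hat u_j \mapsto O\hat u_j$ while the right singular vectors $\hat v_j$ are unchanged (they are determined by $\hat M^T \hat M$, and $(O\hat M)^T(O\hat M) = \hat M^T\hat M$). Wait --- that shows $\hat v_j$ is literally invariant, which is too strong; the subtlety is that the joint law of $(\hat v_j, \text{projection data})$ is what transforms. Let me restate: the correct device is to rotate on the \emph{right}.

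Let me instead use right multiplication. Actually the cleanest route: write $E = E_\parallel + E_\perp$ where $E_\parallel = UU^T E$ and $E_\perp = (I-UU^T)E$; these are independent Gaussian blocks. The right singular vectors of $\hat M = M + E_\parallel + E_\perp$ depend on $E_\perp$ only through the Gram-type interaction. The key step is: for any orthogonal $Q$ acting on $\mathr^p$ fixing $\tspan{U}$ pointwise, $Q E_\perp \dist E_\perp$ and $Q$ fixes $M$ and $E_\parallel$, so $Q\hat M \dist \hat M$; applying SVD, $(\hat\sigma_j, Q\hat u_j, \hat v_j)_j \dist (\hat\sigma_j, \hat u_j, \hat v_j)_j$. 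Since $\hat v_j = \hat\sigma_j^{-1}\hat M^T \hat u_j$, and $\hat M^T Q^T = (Q\hat M)^T$, we get that the pair $(VV^T\hat v_j,\ (I-VV^T)\hat v_j)$ --- no. The honest statement is that $\hat v_j$ itself is a deterministic function of $\hat M$, hence of $Q\hat M$ after relabeling via $Q\hat u_j$; but the dependence of $\hat v_j$ on $E_\perp$ enters only through $\hat u_j$. So: conditionally on $E_\parallel$ and on $\{\hat\sigma_j, UU^T\hat u_j\}$, the family $\{(I-UU^T)\hat u_j\}$ is rotationally invariant under $Q$, and correspondingly $(I-VV^T)\hat v_j / \norm{(I-VV^T)\hat v_j}$ inherits a Haar law on the sphere of $\tspan{I-VV^T}$, independent of the remaining data. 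This is exactly the argument structure in Paul \citep{paul2007asymptotics}, Theorem 6, which I would cite for the detailed bookkeeping.

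The three conclusions then follow: conclusion (1) is the Haar/uniform law, and the zero-mean statement is immediate from symmetry $w \dist -w$. Conclusions (2) and (3) follow because a Haar-distributed unit vector on a sphere is independent of anything measurable with respect to the sigma-algebra that was conditioned on --- in particular $VV^T\hat v_j$ (which is a function of the ``parallel'' data) and $\norm{(I-VV^T)\hat v_j}$ (a function of $\hat\sigma_j$ and the parallel data). To make the conditioning rigorous I would set up the decomposition carefully: parametrize $\hat v_j$ by its norm-$\tspan{V}$ component, its norm in the orthogonal complement, and a direction vector in that complement, and show the direction is conditionally Haar and hence jointly independent of the other two coordinates.

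\textbf{Main obstacle.} The delicate point is making precise the conditioning argument --- identifying exactly which sigma-algebra the rotation $Q$ leaves invariant and checking that $(I-VV^T)\hat v_j$'s direction, after this conditioning, is genuinely Haar rather than merely exchangeable. One must be careful that the rotational symmetry group acts transitively on the relevant sphere and that the normalization $\hat v_j \neq 0$ in the orthogonal complement holds almost surely (true since $E_\perp$ has a density and $p > k$). I would handle this by reducing, via the orthogonal invariance of $E_\perp$, to an explicit Gaussian representation and then invoking the standard fact that $w/\norm{w}$ for $w\sim\mathn(0,I)$ is uniform on the sphere and independent of $\norm{w}$ --- essentially deferring the bookkeeping to the cited results of \citep{paul2007asymptotics, johnstone2018pca} while supplying the invariance reduction in the supplement.
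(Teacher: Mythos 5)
You correctly identify rotational invariance of the Gaussian noise as the engine of the proof and cite the right reference, but the argument you actually sketch goes down the wrong branch and has a genuine gap. You flirt briefly with right multiplication (``Let me instead use right multiplication'') and then abandon it in favor of a left rotation $Q\in\mathbb{R}^{p\times p}$ fixing $\tspan{U}$. That choice cannot work, for exactly the reason you yourself flagged one paragraph earlier: $\hat v_j$ is determined by $\hat M^T\hat M$, and $(Q\hat M)^T(Q\hat M)=\hat M^T\hat M$, so the law of $\hat v_j$ is literally unchanged by left rotations --- you learn nothing new about it. The distributional equality you write, $(\hat\sigma_j,Q\hat u_j,\hat v_j)_j\dist(\hat\sigma_j,\hat u_j,\hat v_j)_j$, only expresses invariance in the $\hat u$-coordinates; it does \emph{not} propagate to $\hat v_j$. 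Your attempt to route around this via $\hat v_j=\hat\sigma_j^{-1}\hat M^T\hat u_j$ is an algebraic identity holding within a single realization, not a coupling that transfers the Haar law of $(I-UU^T)\hat u_j$ onto $(I-VV^T)\hat v_j$. So the conditional-Haar claim for $(I-VV^T)\hat v_j/\norm{(I-VV^T)\hat v_j}$ is asserted but not derived; it is the step that the whole lemma rests on.

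The paper's proof instead uses the right rotation you abandoned. It takes $O\in\mathbb{R}^{n\times n}$ orthogonal with $OV=V$, notes $\hat MO^T=U\Sigma V^T+EO^T\dist\hat M$, reads off the SVD $\hat MO^T=\sum_j\hat\sigma_j\hat u_j(O\hat v_j)^T$, and concludes $\hat v_j\dist O\hat v_j$ for every such $O$. Because $O$ fixes $VV^T\hat v_j$ and rotates $(I-VV^T)\hat v_j$ arbitrarily within $\tspan{I-VV^T}$, this simultaneously gives the Haar law of the normalized orthogonal component (assertion 1), the independence from $VV^T\hat v_j$ (assertion 2), and, since $\norm{(I-VV^T)\hat v_j}=\sqrt{1-\norm{VV^T\hat v_j}^2}$, assertion 3. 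If you redo your writeup, the fix is simply to carry through the right-multiplication idea you started: the group acting here lives on $\mathbb{R}^n$, not $\mathbb{R}^p$, because it is the right singular vectors you need to rotate.
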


\subsection{Proof Sketch for  Theorem \ref{thm:main}} \label{subsec:sketch} 
In this section, we provide a  sketch for the proof of Theorem \ref{thm:main}. The complete and detailed proof is given in section \ref{subsec:proof}. Throughout the proof, we assume that the random event $\mathcal{F}$  (defined in  (\ref{eqn:F_def})) holds.

We use the equivalence between  Algorithm  \ref{alg:main} and Algorithm \ref{alg:rankk} (by Lemma \ref{lem:equivalence}), where clustering is performed on the columns of $\hat  P = \hat U \hat Y$. Hence, it is sufficient to study the behavior of $(\hat z, \{\hat \theta_j\}_{j\in[n]})$.
Particularly,  (\ref{eqn:kmeans_P}) implies a {\em local} optimality result of the estimated labels, i.e., 
\begin{align*}
\hat z_i =\argmin_{j\in[k]} \norm{\hat P_\cd{i} - \hat \theta_j}^2,\forall i\in[n].
\end{align*}
Then after label permutation, which without loss of generality we assume to be $\phi=\text{Id}$, $n\ell(\hat z,z^*)$ can be bounded by
\begin{align*}
n\ell(\hat z,z^*) &
= \sum_{i=1}^n \indic{\argmin_{a\in[k]} \norm{\hat P_\cd{i} - \hat \theta_a}^2 \neq z^*_i} \\
& \leq \sum_{i=1}^n \sum_{a\neq z^*_i}\indic{\norm{\hat P_\cd{i} - \hat \theta_a}^2 \leq \norm{\hat P_\cd{i} - \hat \theta_{z^*_i}}^2}.
\end{align*}
We divide the remaining proof into four steps, corresponding to Sections \ref{subsubsec:decomposition} to \ref{subsubsec:final} in the complete proof. 
\paragraph{Step 1 (Sketch of Section \ref{subsubsec:decomposition})} We decompose $\ell(\hat z,z^*)$ into two parts: the first part corresponds to the leading large singular values, and the other one is related to the remaining ones. To achieve this, we split  $\{\hat P_\cd{i} \}_{i\in[n]}$ and $\{ \hat \theta_j\}_{j\in[k]}$ into two parts.
We define $r\in[k]$ as follows (with $\sigma_{k+1}:=0$)
\begin{align} \label{def:r}
r := \max\cbr{ j \in [k]: \sigma_j  - \sigma_{j+1} \geq \rho (\sqrt{n}+\sqrt{p})},
\end{align} 
where $\rho\rightarrow\infty$ is some quantity whose value will be given in the complete proof. There are two benefits in choosing $r$ this way:  singular values with index larger than $r$ are relatively small; and the singular value gap $\sigma_r -\sigma_{r+1}$ is large enough to apply matrix spectral perturbation theory.
We split $\hat U$ into $(\hat U_{1:r}, \hat U_{\br{r+1}:k})$ and hence we obtain that $\hat P_\cd{i} = \hat P_\cd{i}^\tpo + \hat P_\cd{i}^\tpt$, where
\begin{align*}
\hat P_\cd{i}^\tpo = {\hat U_{1:r}\hat U_{1:r}^T}\hat P_\cd{i} 
,\quad \text{and }\hat P_\cd{i}^\tpt  = {\hat U_{\br{r+1}:k}\hat U_{\br{r+1}:k}^T}\hat P_\cd{i}.
\end{align*}
Likewise, we decompose $\hat \theta_j = \hat \theta_j^\tpo +  \hat \theta_j^\tpt$. Then we estimate 
\begin{align}
n\ell(\hat z,z^*) \notag  
& \leq  \sum_{i=1}^n \sum_{a\neq z^*_i} \indic{\norm{\hat P_\cd{i}^\tpo - \hat \theta_a^\tpo}^2  - \norm{\hat P_\cd{i}^\tpo - \hat \theta^\tpo_{z^*_i}}^2 \leq \gamma\Delta^2   }\\
&\quad + \sum_{i=1}^n \sum_{a\neq z^*_i} \indic{ \gamma\Delta^2 \leq  -  \norm{\hat P_\cd{i}^\tpt - \hat \theta_a^\tpt}^2 + \norm{\hat P_\cd{i}^\tpt - \hat \theta^\tpt_{z^*_i}}^2   }  \notag \\
& =: \sum_{i=1}  \sum_{a\neq z^*_i} A_{i,a} + \sum_{i=1}  \sum_{a\neq z^*_i} B_{i,a}. \label{eq:A+B}
\end{align}
for some  $\gamma =o(1)$ such that $\gamma\Delta /k\rightarrow\infty$. The value of $\gamma$ will be given in the complete proof. We now investigate the two double-sums above separately. 
\paragraph{Step 2 (Sketch of Section \ref{subsubsec:upper_A})}
Here we consider the terms $A_{i,a}$ in the first double-sum above. 
Lemma \ref{lem:theta_dist}  shows that $\{\hat \theta_j\}_{j\in[k]}$ are close to their true values $\{ \theta^*_j\}_{j\in[k]}$:
\begin{align*}
\max_{j\in[k]} \norm{\hat \theta_j  - \theta^*_{j}} =o(\Delta).
\end{align*}
Together with the fact that the centers $\{ \theta_j^*\}_{j=1}^k$ are separated by $\Delta$ and that $\max_{j\geq r+1}\hat \sigma_j$ is relatively small, we bound
\begin{align*} A_{i,a} = & \indic{\norm{\hat P_\cd{i}^\tpo - \hat \theta_a^\tpo}^2  - \norm{\hat P_\cd{i}^\tpo - \hat \theta^\tpo_{z^*_i}}^2 \leq \gamma\Delta^2   }\\  \leq &  
\indic{\br{1-o(1)}\Delta \leq 2\norm{\hat P_\cd{i}^\tpo - \hat U_{1:r}\hat U_{1:r}^T \theta^*_{z^*_i}}}. 
\end{align*}
Next, we observe that $\hat P_\cd{i}^\tpo - \hat U_{1:r}\hat U_{1:r}^T \theta^*_{z^*_i} = \hat U_{1:r}\hat U_{1:r}^T( \hat P -P)e_i$  
and show that $\|\hat U_{1:r}\hat U_{1:r}^T( \hat P -P)VV^Te_i\| = o(\Delta)$ by using that $|V_{i,j}|\leq \sqrt{k/(n\beta)}$. Hence, we obtain that  \begin{align*} A_{i,a} & \leq \indic{\br{1-o(1)}\Delta \leq 2\norm{ \hat U_{1:r}\hat U_{1:r}^T \hat P(I-VV^T)e_i}} \\ 
& = \indic{\br{1-o(1)}\Delta \leq 2\norm{ \hat \Sigma_{r\times r}\hat V_{1:r}^T (I-VV^T)e_i}}.
 \end{align*}

Since the singular values may vary in magnitude, a direct application of spectral perturbation theory on $\hat V_{1:r}$ is not sufficient. Instead, we split $[r]$ into disjoint sets $\cup_{1\leq m \leq s}J_m$, such that the condition number in each set equals approximately $1$, i.e., $\max_{j\in J_m}\sigma_j / \min_{j\in J_m}\sigma_j = 1+ o(1)$, and such that the the singular value gaps among $\{J_m\}_{m\in[s]}$ are sufficiently large. We carefully explain how to construct these sets in the complete proof.
We define $\hat \Sigma_{J_m\times J_m},\hat V_{J_m}, V_{J_m}, w_{J_m}$ as the corresponding parts of the related quantities.
We first replace $\hat \Sigma_{r\times r}$ above with $\Sigma_{\times r}$. Indeed, using the variational characterization of the Euclidean norm we have for some $w=(w_{J_1}, \dots, w_{J_s})$, $\|w\|=1$, that 
\begin{align*}
\norm{ \hat \Sigma_{r\times r}\hat V_{1:r}^T (I-VV^T)e_i} & = \sum_{m \in [s]} e_i^T(I-VV^T)\hat V_{J_m} \Sigma_{J_m \times J_m } w_{J_m} \\
& = \sum_{m \in [s]}e_i^T(I-VV^T)\hat V_{J_m} \hat V_{J_m}^T V_{J_m} \Sigma_{J_m \times J_m} w_{J_m}',
\end{align*}
for some $w'\in \mathbb{R}^r$. Since in each set $J_m$ the condition number is bounded by $1+o(1)$ and since $\|(\hat V_{1:r}^TV_{1:r})^{-1}\|=1+o(1)$, 
we can estimate $\|w'\| \leq 1+o(1)$. Thus, we obtain that 

\begin{align*}
&\norm{\hat \Sigma_{r\times r}\hat V_{1:r}^T \br{I-VV^T}e_i}  \\ 
& \leq (1+o(1))\sup_{w\in\mathr^r:\norm{w}=1}  \sum_{m\in[s]}  e_i^T \br{I-VV^T}\br{\hat V_{J_m} \hat V_{J_m}^T -  V_{J_m}  V_{J_m}^T}  V_{J_m} \Sigma_{J_m \times J_m}w_{J_m}.
\end{align*}
The rest of the proof in this section consists of using  spectral perturbation theory to show that $(I-VV^T)\hat V_{1:r}^T$ equals (up to a small order error term) a linear function of the noise matrix $E$. 
Applying Lemma \ref{lem:VV_pert} we show that the above sum is linear in $E$ (up to a $o(\Delta)$ error term ) and obtain 
\begin{align*}
&\norm{\Sigma_{r\times r}\hat V_{1:r}^T \br{I-VV^T}e_i} \\
 = & \sup_{w\in\mathr^r:\norm{w}=1}   \sum_{m\in[s]}  e_i^T  \br{ \sum_{l\in J_m} \frac{1}{\sigma_l} \br{I- VV^T} E^T u_lv_l^TV_{J_m} + S_m }\Sigma_{J_m \times J_m}w_{J_m} \\
= &   \norm{U_{1:r}^T E\br{I - VV^T}e_i } + o(\Delta).
\end{align*}
Hence, summarizing, on the event $\mathcal{F} \cap \mathcal{H}_G$ we bound
\begin{align*}
 \sum_{i=1}^n \sum_{a \neq z_i^*} A_{i,a} 
&\leq k \sum_{i=1}^n \indic{\br{1-o(1)}\Delta \leq 2   \norm{U_{1:r}^T E\br{I - VV^T}e_i }}.
\end{align*}
The tail probability and expectation of $\norm{U_{1:r}^T E\br{I - VV^T}e_i }^2$ are bounded by the tail probability and expectation of a  chi-square distributed random variable with $k$ degrees of freedom, $\chi^2_k$. Thus, there exist $\{\xi_i\}_{i\in[n]}\overset{i.i.d.}{\sim}\chi^2_k$, such that on the event $\mathcal{F} \cap \mathcal{F}'$ 
\begin{align*}
\sum_{i=1}^n \sum_{a \neq z_i^*} A_{i,a} & \leq k \sum_{i=1}^n \indic{\br{1-o(1)}\Delta \leq 2 \sqrt{\xi_i}} .
\end{align*}
The tail probability of the square root of a $\chi^2$ distribution can be bounded by using Borell's inequality and hence we obtain that 
\begin{align*}
\E \sum_{i=1}^n \sum_{a \neq z_i^* } A_{i,a}  \indic{\mathcal{F} \cap \mathcal{F}'} 
\leq nk\ebr{-\br{1-o(1)}{\Delta^2}{/8}}. 
\end{align*}

\paragraph{Step 3 (Sketch of Section \ref{subsubsec:upper_B})} 
We next provide an upper bound on the $B_{i,a}$-terms in \eqref{eq:A+B}, corresponding to small singular values. We have that  $$\langle\hat P_\cd{i}^\tpt , \hat \theta_a^\tpt - \hat \theta^\tpt_{z^*_i}\rangle = \sum_{l=r+1}^k\hat\sigma_l \hat V_{i,l} (\hat u_l^T \hat \theta_a - \hat u_l^T \hat \theta_{z^*_i}),$$ which, up to some constant scalar, can be upper bounded by $\sum_{l=r+1}^k \sqrt{n}|\hat V_{i,l}|$ by construction of $r$ and Weyl's inequality.  Hence, on the event $\mathcal{F}$ we obtain that
\begin{align*}  B_{i,a} : & = \indic{ \gamma\Delta^2 \leq  -  \norm{\hat P_\cd{i}^\tpt - \hat \theta_a^\tpt}^2 + \norm{\hat P_\cd{i}^\tpt - \hat \theta^\tpt_{z^*_i}}^2   } \\ & \leq   \sum_{l=r+1}^k  \indic{c\gamma\Delta^2/k \leq \sqrt{n}\abs{e_i^T \hat v_l}}. \end{align*}
We decompose $e_i^T\hat v_l=e_i^TVV^T\hat v_l+e_i^T(I-VV^T)\hat v_l$. Since, by Lemma \ref{prop:V_pop} $|V_{ij}| \leq \sqrt{k/(n\beta)}$ the first term in this decomposition is negligible, 
leaving $\br{I - VV^T}\hat v_l^T$ as the main term to be analyzed.

We apply Lemma \ref{lem:hat_V_normal} to show that, after normalization, $\br{I - VV^T}\hat v_l^T$ is Haar distributed on the unit sphere spanned by $I-VV^T$. Hence, on an event $\mathcal{T}$, $e_i^T\br{I - VV^T}\hat v_l^T$ has a Gaussian tail and variance at most $3/(n-k)$. This yields
\begin{align*}
\E B_{i,a} \indic{\mathcal{F} \cap \mathcal{T}} &  \leq \sum_{l=r+1}^k  \E \indic{c'\gamma\Delta^2/k \leq \sqrt{n}\abs{e_i^T \br{I-VV^T} \hat v_l}}\indic{\mathcal{T}} \\ & \leq k \ebr{-c''\br{{\gamma \Delta^2}{k^{-1}}}^2}. 
\end{align*}

\paragraph{Step 4 (Sketch of Section \ref{subsubsec:final})}
Summarizing the previous two sections, we obtain that
\begin{align*}
 \mathbb{E}n \ell(\hat z, z) \mathbb{I}(\mathcal{F} \cap \mathcal{H}_G \cap \mathcal{T}) \leq & \sum_{i=1}^n \sum_{a \neq z_i^*} \mathbb{E} ( A_{i,a}+B_{i,a}) \mathbb{I}(\mathcal{F} \cap \mathcal{H}_G \cap \mathcal{T}) \\ \leq &  nk\ebr{-\br{1-o(1)}\frac{\Delta^2}{8}}  + k^2 n\ebr{-c\br{{\gamma \Delta}{k^{-1}}}^2\Delta^2}\\ = &  n\ebr{-\br{1-o(1)}\frac{\Delta^2}{8}}. 
\end{align*}
By Markov's inequality,  with high probability, we achieve
\begin{align*}
\ell(\hat z,z^*) \mathbb{I}(\mathcal{F} \cap \mathcal{H}_G \cap \mathcal{T})  \leq   n\ebr{-\br{1-o(1)}{\Delta^2}{/8}}.
\end{align*}
Finally, a union bound with $\pbr{\mathcal{F} \cap \mathcal{H}_G \cap \mathcal{T}}$ leads to the desired rate for $\ell(\hat z,z^*)$.

\subsection{Proof of Theorem \ref{thm:main}}\label{subsec:proof}
In this section, we are going to give a complete and detailed proof of Theorem \ref{thm:main}. We divide this section into four parts, following the same structure as in the proof sketch (i.e,  Section \ref{subsec:sketch}). In Section \ref{subsubsec:decomposition}, we establish the decomposition $\ell(\hat z, z^*) \leq A + B$. Then in Section \ref{subsubsec:upper_A} and Section \ref{subsubsec:upper_B}, we provide upper bounds on $\E A$ and $\E B$, respectively. Finally in Section \ref{subsubsec:final}, we wrap everything up to achieve the desired rate. Again, throughout the whole proof, we assume the random event $\mathcal{F}$ (defined in  (\ref{eqn:F_def})) holds.

Applying Lemma \ref{lem:equivalence} we obtain that it suffices to bound $\ell(\hat z',z^*)$ where $\hat z'$ is the output of Algorithm \ref{alg:rankk}. 
Indeed, Lemma \ref{lem:equivalence} proves that there exists a label permutation $\phi_0 \in\Phi$ such that $\hat z_i = \phi_0(\hat z_i')$ for all $i\in[n]$. Without loss of generality, we assume that $\phi_0$ is the identity mapping. 
By definition of the $k$-means objective in  (\ref{eqn:kmeans_P}), we have that
\begin{align*}
 \br{\hat z,\cbr{\hat \theta_j}_{j=1}^k} = \argmin_{ z\in [k]^n , \cbr{ \theta_j}_{j=1}^{k} \in \mathr^{k}} \sum_{i\in[n]}\norm{\hat P_\cd{i} - \theta_{z_i}}^2.
 \end{align*}
In particular, $\hat z$ fulfills the {\em local} optimality condition
\begin{align*}
&\hat z_i =\argmin_{j\in[k]} \norm{\hat P_\cd{i} - \hat \theta_j}^2,\forall i\in[n], 
\end{align*}
Hence, assuming without loss of generality that $\phi=\text{Id}$, we obtain that 
\begin{align} \label{eq:loc_opt}
n\ell(\hat z, z^*) 
& = \sum_{i=1}^n \indic{\argmin_{a\in[k]} \norm{\hat P_\cd{i} - \hat \theta_a}^2 \neq z^*_i} \\
& \leq \sum_{i=1}^n \sum_{a\neq z^*_i}\indic{\norm{\hat P_\cd{i} - \hat \theta_a}^2 \leq \norm{\hat P_\cd{i} - \hat \theta_{z^*_i}}^2} \triangleq \sum_{i=1}^n \sum_{a\neq z^*_i} T_{i,a}.
\end{align}
\subsubsection{Decomposing $\ell(\hat z, z^*)$} \label{subsubsec:decomposition}
We decompose $\{\hat P_\cd{i}\}_{i\in[n]},\{\hat \theta_j\}_{j\in[k]}$ into two parts: the first part corresponds to singular values that are above the detection threshold and where $\hat  P_{\cd{i}}$ contains signal and the second part consists of the remainder noise term. 
We define $r\in[k]$ as (with $\sigma_{k+1}:=0$)
\begin{align}
r := \max\cbr{ j \in [k]: \sigma_j  - \sigma_{j+1} \geq \rho \sqrt{n+p}},\label{eqn:r_def}
\end{align}
for a sequence  $\rho \rightarrow \infty $ to be determined later. We note that if  $\Delta / (k^\frac{3}{2}\rho \beta^\frac{1}{2} \br{1+p/n}^\frac{1}{2})\rightarrow\infty$, the set $\cbr{ j \in [k]: \sigma_j  - \sigma_{j+1} \geq \rho \sqrt{n+p}}$ is not empty. Otherwise, this would imply $\sigma_1 \leq k\rho \sqrt{n+p}$ which would contradict Proposition \ref{prop:V_form}.

Thus, $r$ is the largest index in $[k]$ such that the corresponding singular value gap is greater than or equal to  $\rho\sqrt{n+p}$. An immediate implication is 
\begin{align}
\max_{r + 1 \leq j \leq k} \sigma_j \leq k\rho \sqrt{n+p}.\label{eqn:singular_value_small_upper}
\end{align}
We split $\hat U$ into $(\hat U_{1:r} ,\hat U_{(r+1):k})$ where $\hat U_{1:r}=\br{\hat u_1,\ldots, \hat u_{r}}$. 
Recall that $\hat P_\cd{i} = \hat U \hat Y_\cd{i}$  and $\hat \theta_j = \hat U \hat c_j.$ 
We decompose $\hat P_\cd{i} = \hat P_\cd{i}^\tpo + \hat P_\cd{i}^\tpt$, where
\begin{align*}
\hat P_\cd{i}^\tpo = {\hat U_{1:r}\hat U_{1:r}^T}\hat P_\cd{i} 
,\quad \text{and }\hat P_\cd{i}^\tpt  = {\hat U_{\br{r+1}:k}\hat U_{\br{r+1}:k}^T}\hat P_\cd{i}. 
\end{align*}
Similarly, for each $j\in[k]$, we decompose $\hat \theta_j = \hat \theta_j^\tpo + \hat \theta_j^\tpt$, where
\begin{align*}
\hat \theta_j^\tpo =   {\hat U_{1:r}\hat U_{1:r}^T}\hat \theta_j   
,\quad \text{and }\hat \theta_j^\tpt = {\hat U_{\br{r+1}:k}\hat U_{\br{r+1}:k}^T}\hat \theta_j.  
\end{align*}
With this notation and due to the orthogonality of $\cbr{\hat u_l}_{l\in[k]}$, we obtain that 
\begin{align*}
&  T_{i,a}  \leq  \indic{\norm{\hat P_\cd{i}^\tpo + \hat P_\cd{i}^\tpt - \hat \theta_a^\tpo -  \hat \theta_a^\tpt}^2 \leq \norm{\hat P_\cd{i}^\tpo + \hat P_\cd{i}^\tpt - \hat \theta_{z^*_i}^\tpo - \hat \theta_{z^*_i}^\tpt }^2}  \\
& = \indic{ 2\iprod{\hat P_\cd{i}^\tpo - \hat \theta^\tpo_{z^*_i}}{\hat \theta^\tpo_{z^*_i} - \hat \theta^\tpo_{a}} + \norm{\hat \theta^\tpo_{z^*_i} - \hat \theta^\tpo_{a}}^2 \leq  2\iprod{\hat P_\cd{i}^\tpt }{ \hat \theta_a^\tpt - \hat \theta^\tpt_{z^*_i}}  - \norm{\hat \theta_a^\tpt}^2 + \norm{\hat \theta^\tpt_{z^*_i}}^2}.
\end{align*}
We denote by $\rho'' = o(1)$ another sequence which we will specify later. 
We split the indicator function above according to our decomposition and obtain that 
\begin{align*}
  T_{i,a}  \leq  &   \indic{  \norm{\hat \theta^\tpo_{z^*_i} - \hat \theta^\tpo_{a}} -\frac{\rho''\Delta^2  + \norm{\hat \theta^\tpt_{z^*_i}}^2}{\norm{\hat \theta^\tpo_{z^*_i} - \hat \theta^\tpo_{a}}} \leq  2\norm{\hat P_\cd{i}^\tpo - \hat \theta^\tpo_{z^*_i}} } \\ &  + \indic{\rho''\Delta^2 \leq 2\iprod{\hat P_\cd{i}^\tpt }{ \hat \theta_a^\tpt - \hat \theta^\tpt_{z^*_i}}  }  =:A_{i,a}+B_{i,a}
\end{align*}
where we also used the Cauchy-Schwarz inequality. 
We now consider $A_{i,a}$ and $B_{i,a}$ separately. 
\subsubsection{Upper Bounds on $\E A_{i,a}$}\label{subsubsec:upper_A}
By Lemma \ref{lem:theta_dist}, we have on the event $\mathcal{F}$ that 
$\max_{j\in[k]}\|\hat \theta_j - \theta^*_{\phi'(j)}\| \leq  8\sqrt{2}\sqrt{\beta^{-1}k^2\br{1+p/n}}$
for some label permutation mapping $\phi'\in\Phi$. Without loss of generality, we assume again that $\phi'=\text{Id}$. 
Define $\hat Z\in\cbr{0,1}^{n\times k}$ to be the estimated label matrix, i.e.,  
$
\hat Z_{i,j}=\indic{\hat z_i = j}. 
$
With this notation and by definition of the $k$-means objective we obtain that 
\begin{align} \label{eq:thetaex}
\hat \theta_j = \frac{\sum_{\hat z_i = j} \hat P_\cd{i}}{\sum_{\hat z_i = j} 1} = \frac{ \hat P \hat Z_\cd{j} }{\abs{\cbr{i\in[n]:\hat z_i =j}}}=  \frac{\sum_{l\in[k]} \hat \sigma_l \hat u_l \hat v_l^T\hat Z_\cd{j}}{\abs{\cbr{i\in[n]:\hat z_i =j}}}. 
\end{align}
Hence, using the above, we obtain that 
\begin{align*}
\abs{\iprod{\hat u_l}{\hat \theta_j}} = \frac{\abs{\hat \sigma_l \hat v_l^T \hat Z_\cd{j}}}{\abs{\cbr{i\in[n]:\hat z_i =j}}}  \leq\frac{\hat \sigma_l \norm{\hat v_l} \norm{\hat Z_\cd{j}} }{\abs{\cbr{i\in[n]:\hat z_i =j}}}  = \frac{\hat \sigma_l}{\sqrt{\abs{\cbr{i\in[n]:\hat z_i =j}}}} .
\end{align*}
By  (\ref{eqn:singular_value_small_upper}) and Lemma \ref{lem:singular_diff}, we have on the event $\mathcal{F}$ that 
\begin{align}
\max_{r+1\leq j\leq k} \hat \sigma_j \leq \sqrt{2}(\sqrt{n}+\sqrt{p})+ \max_{r+1\leq j\leq k}  \sigma_j  \leq \br{k\rho + 4}\sqrt{n+p}.\label{eqn:hat_sigma_max}
\end{align}
By Lemma \ref{lem:theta_dist} we have that  $\abs{\cbr{i\in[n]:\hat z_i = j}} \geq \frac{\beta n}{2k}$ and  thus we obtain 
\begin{align}
\max_{j\in[k]}\max_{r+1\leq l\leq k} \abs{\iprod{\hat u_l}{\hat \theta_j} } \mathbb{I}(\mathcal{F})\leq   \br{k\rho + 4} \sqrt{\frac{2k}{\beta}\br{1+\frac{p}{n}}}.\label{eqn:iprod_hat_u_hat_theta}
\end{align}
Consequently, we bound, working on the event $\mathcal{F} $
\begin{align}
\max_{j\in[k]} \norm{\hat \theta_j^\tpt}^2&=  \max_{j\in[k]}   \sum_{r+1\leq l\leq k} \iprod{\hat u_l}{\hat \theta_j}^2\nonumber\\
&\leq  \frac{2k^2}{\beta}\br{1+\frac{p}{n}}\br{k\rho + 4}^2.\label{eqn:hat_theta_2}
\end{align}

Applying Lemma \ref{lem:theta_dist} we have on the event $\mathcal{F}$ for any $a \neq b $ that  $$\|{\hat \theta_{b} - \hat \theta_{a}}\| \geq {\norm{ \theta_{b}^* -  \theta_{a}^*} - \|{\hat \theta_{b} -  \theta^*_{b}}\|  - \|{ \theta^*_{a} - \hat \theta_{a}}}\| 
\geq  \Delta - 16\sqrt{2}\sqrt{\beta^{-1}k^2\br{1+p/n}}.$$
Hence, using also  (\ref{eqn:hat_theta_2}), we have on the event $\mathcal{F}$ that
\begin{align}
\min_{a,b\in[k]:a\neq b} \norm{\hat \theta_{b}^\tpo - \hat \theta_{a}^\tpo} &\geq  \min_{a,b\in[k]:a\neq b} \br{\norm{\hat \theta_{b} - \hat \theta_{a}}  - \norm{\hat \theta_a^\tpt}  - \norm{\hat \theta_b^\tpt}} \nonumber\\
&\geq  \Delta - \br{16\sqrt{2} + 2\sqrt{2}\br{k\rho + 4}}\sqrt{\beta^{-1}k^2\br{1+\frac{p}{n}}}.\label{eqn:hat_theta_dist_1}
\end{align}
Therefore, by the above, we obtain that 
\begin{align*}  A_{i,a}\mathbb{I}(\mathcal{F}) \leq &  \mathbb{I} \Bigg\{  \br{  \Delta - \br{16\sqrt{2} + 2\sqrt{2}\br{k\rho + 6}}\sqrt{\beta^{-1}k^2\br{1+\frac{p}{n}}}} \\ & -\frac{  \rho''\Delta^2   +  \frac{2k^2}{\beta}\br{1+\frac{p}{n}}\br{k\rho + 4}^2}{  \Delta - \br{16\sqrt{2} + 2\sqrt{2}\br{k\rho + 4}}\sqrt{\beta^{-1}k^2\br{1+\frac{p}{n}}}}
 \leq  2\norm{\hat P_\cd{i}^\tpo - \hat \theta^\tpo_{z^*_i}} \Bigg\}\mathbb{I}(\mathcal{F}). \end{align*} 
For simplicity, define
\begin{align*}
\eta := \sqrt{1 + p/n}.
\end{align*}
Since by construction $\rho\rightarrow\infty$ and by assumption $\Delta /(k^2\rho\beta^{-1/2}\eta) \rightarrow\infty$, there exists some constant $c_1>0$, such that the above  can be simplified into
\begin{align*}
 &  A_{i,a}\mathbb{I}(\mathcal{F})  \leq \indic{\br{1-c_1 \rho'' - \frac{c_1k^2\rho\beta^{-\frac{1}{2}} \eta }{\Delta}} \Delta \leq 2\norm{\hat P_\cd{i}^\tpo - \hat \theta^\tpo_{z^*_i}} }\mathbb{I}(\mathcal{F})
 \end{align*} 
Still working on the event $\mathcal{F}$, we further bound
\begin{align*}
& \norm{\hat P_\cd{i}^\tpo - \hat \theta^\tpo_{z^*_i}}   \leq  \norm{\hat P_\cd{i}^\tpo - \hat U_{1:r}\hat U_{1:r}^T \theta^*_{z^*_i}} +  \norm{ \hat \theta^\tpo_{z^*_i} -  \hat U_{1:r}\hat U_{1:r}^T \theta^*_{z^*_i}} \\
&\leq \norm{\hat P_\cd{i}^\tpo - \hat U_{1:r}\hat U_{1:r}^T \theta^*_{z^*_i}}  +  \norm{ \hat \theta_{z^*_i} - \theta^*_{z^*_i}}\leq  \norm{\hat P_\cd{i}^\tpo - \hat U_{1:r}\hat U_{1:r}^T \theta^*_{z^*_i}}  + 8\sqrt{2} \sqrt{\beta^{-1} k^2 \br{1+\frac{p}{n}}},
\end{align*}
where the last inequality is due to Lemma \ref{lem:theta_dist}. Since $\theta^*_{z^*_i} = P_\cd{i}$, we have that  $\hat P_\cd{i}^\tpo - \hat U_{1:r}\hat U_{1:r}^T \theta^*_{z^*_i} = ({\hat U_{1:r}\hat U_{1:r}^T \hat P - \hat U_{1:r}\hat U_{1:r}^T P})e_i$. Thus, we obtain that 
\begin{align*} 
\hat P_\cd{i}^\tpo - \hat U_{1:r}\hat U_{1:r}^T \theta^*_{z^*_i} 
& = \hat U_{1:r}\hat U_{1:r}^T \br{\hat P - P}VV^Te_i +  \hat U_{1:r}\hat U_{1:r}^T \hat P\br{I-VV^T}e_i.
\end{align*}
We first bound $\hat U_{1:r}\hat U_{1:r}^T (\hat P - P)VV^Te_i $.
Indeed, by Proposition \ref{prop:V_form} and Lemma \ref{lem:theta_dist} we have on the event $\mathcal{F}$ that
\begin{align*}
\| \hat U_{1:r}\hat U_{1:r}^T (\hat P - P)VV^Te_i \| \leq \|\hat P-P\|_F  \|V^Te_i\|  \leq 4 \sqrt{\beta^{-1} k^2}\br{1+\sqrt{\frac{p}{n}}}
\end{align*}
Thus, there exists some constant $c_2 > 0$ such that 
\begin{align*}
    A_{i,a}\mathbb{I}(\mathcal{F}) & \leq  \indic{\br{1-c_1 \rho'' - \frac{c_2k^2\rho\beta^{-\frac{1}{2}} \eta }{\Delta}} \Delta \leq 2 \| \hat U_{1:r}\hat U_{1:r}^T \hat P(I-VV^T)e_i \|  }\mathbb{I}(\mathcal{F})
    \\ 
    & =  \indic{\br{1-c_1 \rho'' - \frac{c_2k^2\rho\beta^{-\frac{1}{2}} \eta }{\Delta}} \Delta \leq 2\norm{ \hat \Sigma_{r\times r}\hat V_{1:r}^T \br{I-VV^T}e_i} }\mathbb{I}(\mathcal{F}),
\end{align*}
where we define $\hat \Sigma_{r\times r}=\text{diag}\{\hat \sigma_1,\ldots, \hat \sigma_{r}\}$ and $\hat V_{1:r} =\br{\hat v_1,\ldots, \hat v_r}$. We define the corresponding population counterparts analogue.


For any unit vector $w\in\mathr^r$, define $w' = \Sigma_{r\times r}^{-1} \hat \Sigma_{r\times r} w$. This yields the identity $\Sigma_{r\times r}w' =  \hat \Sigma_{r\times r} w$. By definition of $r$ and Lemma \ref{lem:singular_diff} we obtain that 
%
\begin{align*}
\max_{j\in[r]} {\frac{\hat \sigma_j}{\sigma_j}} \leq \max_{j\in[r]}  {\frac{\sigma_j + 4\sqrt{n+p}}{\sigma_j}} \leq  1+6\rho^{-1}
\end{align*}
and therefore $\norm{w'}\leq 1+6\rho^{-1}$.
Thus, using the variational characterization of the Euclidean norm, we bound 
\begin{align*}
A_{i}\indic{\mathcal{F}} & \leq  \indic{\br{1-  c_1\rho'' - \frac{c_2k^2\rho\beta^{-\frac{1}{2}} \eta }{\Delta}} \Delta\leq 2\sup_{w:\norm{w}\leq 1}  e_i^T \br{I-VV^T}  \hat V_{1:r} \hat \Sigma_{r\times r}w}\indic{\mathcal{F}} \\
& \leq  \sum_{i\in[n]} \indic{\frac{1-  c_1\rho'' - \frac{c_2k^2\rho\beta^{-\frac{1}{2}} \eta }{\Delta}}{1+6\rho^{-1}} \Delta\leq 2 \|  \Sigma_{r\times r} \hat V_{1:r}^T (I-VV^T)e_i \| }\indic{\mathcal{F}}
\end{align*}
We further investigate $ e_i^T \br{I-VV^T}\hat V_{1:r}  \Sigma_{r\times r}w$. First we partition the leading $[r]$ singular values. Define $s$ as 
\begin{align}\label{eqn:J_m_def}
s:=\left |\cbr{l\in[r]: \frac{\sigma_l - \sigma_{l+1}}{\sigma_{l+1}}\geq \frac{1}{\rho' k}} \right | ,
\end{align}
for some $\rho'\rightarrow\infty$ whose value will be specified later. We denote its entries by $j'_1 < j'_2 < \ldots  < j'_s$.  Due to  (\ref{eqn:singular_value_small_upper}) we have that $j'_s = r$. We define $j'_0 = 0$,
\begin{align*}
j_m = j'_{m-1} + 1 ,~ m\in[s]
\end{align*}
and split $[r]$ into disjoint sets $\cbr{J_m}_{m=1}^s$, where $J_m = \cbr{j_m,j_m + 1,\ldots, j'_m}$.
This partition has the following properties: 
\begin{itemize}
\item   Defining the singular value gaps 
$g_m  :=\min \cbr{ \sigma_{j_{m-1}'} -\sigma_{j_{m}}  ,  \sigma_{j_m'} -\sigma_{j_{m+1}}},~ m\in[s],$
with $j_{s+1} = r+1$ and $\sigma_0 = +\infty$, we obtain by  (\ref{eqn:J_m_def}) for any $m\in[s-1]$ that 
$$ \sigma_{j_m' } - \sigma_{j_{m+1}}  = \sigma_{j_m' } - \sigma_{j_m'+1}  \geq  \frac{\sigma_{j_m'+1}}{\rho'k}\nonumber \geq  \frac{\sigma_r}{\rho'k} \geq \frac{\rho \sqrt{n+p}}{\rho'k}.
$$
Hence, we obtain that 
\begin{align}\label{eqn:sigma_j_m_gap}
\min_{m\in[s]} g_m \geq  \frac{\rho \sqrt{n+p}}{\rho'k}.
\end{align}
\item The set defined in  (\ref{eqn:J_m_def}) has an alternative representation, i.e., $$ \cbr{l\in[r]: \frac{\sigma_l - \sigma_{l+1}}{\sigma_{l+1}}\geq \frac{1}{\rho' k}}  = \left  \{l\in[r]: \frac{\sigma_l}{ \sigma_{l+1}} > 1 + \frac{1}{\rho’k} \right \}.$$ 
Therefore, and since $\rho'\rightarrow\infty$, we obtain that 
\begin{align}\label{eqn:sigma_j_m_ratio}
\max_{m\in[s]} \frac{\sigma_{j_m}}{\sigma_{j'_m}} & \leq \br{1 + \frac{1}{\rho'k}}^{\abs{J_m}}\leq \br{1 + \frac{1}{\rho'k}}^k \leq 1+ \frac{2}{\rho'}.
\end{align}
\item Due to  (\ref{eqn:J_m_def}) we have that 
$$\max_{m\in[s]}  \frac{\sigma_{j'_m}}{\sigma_{j'_m} - \sigma_{j'_m+1}}   \leq   \max_{m\in[s]} \frac{1 +  \sigma_{j'_m+1}}{\sigma_{j'_m} - \sigma_{j'_m+1}} \leq 1+ \rho'k.$$
Hence, using also  (\ref{eqn:sigma_j_m_ratio}) and since $\rho'\rightarrow\infty$, we obtain that
\begin{align}\label{eqn:sigma_j_diff_g_ratio}
\max_{m\in[s]} \frac{\sigma_{j_m} - \sigma_{j'_m}}{g_m } &\leq \frac{2}{\rho'}  \max_{m\in[s]}  \frac{\sigma_{j'_m}}{\sigma_{j'_m} - \sigma_{j'_m+1}} \leq  \frac{2}{\rho'}  \br{1+ \rho'k} \leq 3k,
\end{align}
and 
\begin{align}\label{eqn:sigma_j_max_g_ratio}
\max_{m\in[s]} \frac{\sigma_{j_m} }{g_m } &\leq  \br{1 + \frac{2}{\rho'}  }\max_{m\in[s]}  \frac{\sigma_{j'_m}}{\sigma_{j'_m} - \sigma_{j'_m+1}}  \leq 1+ 2\rho'k.
\end{align}
\end{itemize}

Now, consider any fixed $w\in\mathr^r$.
For $m\in[s]$, we denote $\hat V_{J_m} = (\hat v_{j_m},\ldots, \hat v_{j'_m})$,  $ V_{J_m} = ( v_{j_m},\ldots,  v_{j'_m})$, $\Sigma_{J_m \times J_m} = \text{diag}\{\sigma_{j_m},\ldots, \sigma_{j'_m}\} $, and $w_{J_m} = (w_{j_m},\ldots,  w_{j'_m}) $.
Applying this notation, we have that 
\begin{align}\label{eqn:w_into_w_m}
 e_i^T \br{I-VV^T}\hat V_{1:r}  \Sigma_{r\times r}w & =  \sum_{m\in[s]}  e_i^T \br{I-VV^T}\hat V_{J_m}  \Sigma_{J_m\times J_m}w_{J_m}.
\end{align}
For any $m\in [s]$, by the Davis-Kahan-Wedin $sin(\Theta)$ Theorem (see Lemma \ref{lem:wedin}), there exists an orthonormal matrix $O_m \in\mathr^{\abs{J_m}\times \abs{J_m}}$ such that
\begin{align}
\opnorm{\hat V_{J_m}  - V_{J_m}O_m }&\leq \sqrt{2}\opnorm{\hat V_{J_m} \hat V_{J_m}^T - V_{J_m} V_{J_m}^T}\nonumber \\
&\leq  \frac{4\sqrt{2} \opnorm{E}}{g_m} 
\leq \frac{16\sqrt{2}\rho'k}{\rho},\label{eqn:V_J_m_Davis}
\end{align}
where we use  (\ref{eqn:sigma_j_m_gap}) in the last inequality. 
Moreover, we have that 
\begin{align}\label{eqn:hat_V_V_pert}
\opnorm{\hat V_{J_m}^T V_{J_m}O_m - I} \leq \opnorm{\hat V_{J_m} -V_{J_m}O_m}\leq\frac{16\sqrt{2}\rho'k}{\rho}.
\end{align}
and hence, choosing $\rho$ and $\rho'$ such that  $\rho/(\rho' k) > 16\sqrt{2}$, we obtain that  $V_{J_m}^T \hat V_{J_m}$ is invertible and \begin{align} \label{eq:invlower} \|( V_{J_m}^T \hat V_{J_m})^{-1}\|\leq \left ({ 1-\frac{16\sqrt{2}\rho'k}{\rho}} \right )^{-1}.\end{align} 
Now, for fixed $w_{J_m}$ we define 
\begin{align}
w'_{J_m} = \Sigma_{J_m \times J_m}^{-1} \br{\hat V_{J_m}^T V_{J_m}}^{-1} \Sigma_{J_m \times J_m} w_{J_m},\forall m\in[s]\label{eqn:w_prime_def}.
\end{align}
Plugging the above into  (\ref{eqn:w_into_w_m}), we obtain that 
\begin{align*}
e_i^T \br{I-VV^T}\hat V_{1:r}  \Sigma_{r\times r}w & =  \sum_{m\in[s]}  e_i^T \br{I-VV^T}\hat V_{J_m} \hat V_{J_m}^T V_{J_m} \Sigma_{J_m \times J_m}w'_{J_m}.
\end{align*}
By definition of $w_{J_m}'$ we have that 
\begin{align*}
\max_{m\in[s]}\frac{\norm{w_{J_m}'}}{\norm{w_{J_m}}}& \leq  \max_{m\in[s]} \opnorm{ \Sigma_{J_m \times J_m}^{-1}\br{\hat V_{J_m}^T V_{J_m}}^{-1} \Sigma_{J_m \times J_m} } \\
& \leq   \br{1+2\rho'^{-1}}\left ({ 1-\frac{16\sqrt{2}\rho'k}{\rho}} \right )^{-1},
\end{align*}
where we used in the last inequality that  $ \max_{m\in[s]} \opnorm{ \Sigma_{J_m \times J_m}^{-1}} \opnorm{\Sigma_{J_m \times J_m}}  \leq 1+2\rho'^{-1}$ by  (\ref{eqn:sigma_j_m_ratio}) and the upper bound \eqref{eq:invlower}. 
Hence, using also that $\br{I- VV^T}V_{J_m}=0$, we obtain that 
\begin{align*}
&\sup_{w:\norm{w}\leq 1} w^T \Sigma_{r\times r} {\hat V_{1:r}}^T\br{I - VV^T}e_i \\
& \leq    \frac{1+2\rho'^{-1}}{1- 16\sqrt{2}\rho'k\rho^{-1}}  \sup_{w:\norm{w}\leq 1} \sum_{m\in[s]}  e_i^T \br{I-VV^T}\br{\hat V_{J_m} \hat V_{J_m}^T -V_{J_m}V_{J_m}^T} V_{J_m} \Sigma_{J_m \times J_m}w_{J_m}.
\end{align*}
We further evaluate the term on the right hand side above. Applying Lemma \ref{lem:VV_pert}, 
we obtain that 
\begin{align*}
&e_i^T \br{I-VV^T}\br{\hat V_{J_m} \hat V_{J_m}^T -V_{J_m}V_{J_m}^T} V_{J_m} \Sigma_{J_m \times J_m}w_{J_m} \\
  = &  \sum_{l\in J_m} w_l e_i^T \br{I - VV^T} E^Tu_l +  e_i^T  \E S_m \Sigma_{J_m \times J_m}w_{J_m}  + e_i^T  \br{S_m - \E S_m} \Sigma_{J_m \times J_m}w_{J_m}.
\end{align*}
We next show that $\E S_m = 0$. 
Indeed, we have that 
\begin{align*}
\E S_m  
 & =  \br{I- VV^T}\E  \br{\sum_{j\in J_m} \hat v_j \hat v_j^T} V_{J_m} = \sum_{j\in J_m} \E \br{\br{I- VV^T}\hat v_j } \br{ V_{J_m}^T \hat v_j}^T\\
  & = \sum_{j\in J_m} \E \br{ \frac{\br{I- VV^T}\hat v_j}{\norm{\br{I- VV^T}\hat v_j}} } \br{ \norm{\br{I- VV^T}\hat v_j} V^T_{J_m} VV^T  \hat v_j}^T.
\end{align*}
Applying Lemma \ref{lem:hat_V_normal} we obtain that $ \br{I- VV^T}\hat v_j/{\norm{\br{I- VV^T}\hat v_j}}$ and $\norm{\br{I- VV^T}\hat v_j} V^T_{J_m} VV^T  \hat v_j$ are independent. 
Hence, using Lemma \ref{lem:hat_V_normal} again, 
we obtain that 
\begin{align*}
\E S_m& = \sum_{j\in J_m}  \E \br{\frac{\br{I- VV^T}\hat v_j}{\norm{\br{I- VV^T}\hat v_j}} }  \E \br{\norm{\br{I- VV^T}\hat v_j} V^T_{J_m} VV^T  \hat v_j}^T =0.
\end{align*}
Hence, we obtain that 
\begin{align*}
&\sup_{w\in\mathr^r:\norm{w}\leq 1} \sum_{m\in[s]} e_i^T \br{1-VV^T}\br{\hat V_{J_m} \hat V_{J_m}^T -V_{J_m}V_{J_m}^T} V_{J_m} \Sigma_{J_m \times J_m}w_{J_m}  \\
& \leq  \sup_{w\in\mathr^r:\norm{w}\leq 1}  e_i^T\br{I - VV^T} E^TU_{1:r} w 
 +\sup_{w\in\mathr^r:\norm{w}\leq 1}\sum_{m\in[s]}  e_i^T  \br{S_m - \E S_m}\Sigma_{J_m \times J_m}w_{J_m}.
\end{align*}
Summarizing, we obtain that
\begin{align*}
 A_{i,a} & \leq  \mathbb{I}\Bigg\{\frac{1- 16\sqrt{2}\rho'k\rho^{-1}}{\br{1+6\rho^{-1}}\br{1+2\rho'^{-1}}}\br{1-  c_1\rho'' - \frac{c_2k^2\rho\beta^{-\frac{1}{2}} \eta }{\Delta}} \Delta  \\
&\quad \leq 2\norm{U_{1:r}^T E\br{I - VV^T}e_i } 
 +\sup_{w\in\mathr^r:\norm{w}\leq 1}\sum_{m\in[s]}  e_i^T \br{S_m - \E S_m}\Sigma_{J_m \times J_m}w_{J_m} \Bigg\}.
\end{align*}
We next bound the higher order perturbation term on the right hand side. 
Applying Lemma \ref{lem:E_opnorm} and by construction of the partition we obtain that $g_m \geq 8\E\norm{E}$,  and hence we can apply Lemma \ref{lem:VV_pert}. Note that $\norm{\Sigma_{J_m \times J_m}w_{J_m}e_i^T}_* =\norm{\Sigma_{J_m \times J_m}w_{J_m}} \norm{e_i^T }  \leq \sigma_{j_m} \norm{w_{J_m}}.
$ Together with  (\ref{eqn:sigma_j_m_gap}), (\ref{eqn:sigma_j_diff_g_ratio}) and (\ref{eqn:sigma_j_max_g_ratio}), for some constant $c_0>0$,
we have with probability at least $1-2e^{-(\Delta^2 \wedge n)}$ that
\begin{align*}
&\abs{e_i^T  \br{S_m - \E S_m}\Sigma_{J_m \times J_m}w_{J_m}} \\ \leq &  c_0 \br{1 + \frac{\sigma_{j_m} - \sigma_{j_m'}}{g_m}} \frac{\Delta }{g} \br{\frac{\sqrt{n+p} }{g}}\sigma_{j_m} \norm{w_{J_m}}\\
  \leq &  16c_0\rho^{-1}k^3\rho'^2 \Delta \norm{w_{J_m}}.
\end{align*}
Taking a union bound over $J_m$ and since $\sum_m \|w_{J_m}\| \leq \sqrt{k} \|w\|=\sqrt{k}$ we obtain with probability at least $1-2k\ebr{-(\Delta^2 \wedge n)}$  that 
\begin{align*}
\sum_{m\in[s]}  e_i^T  \br{S_m - \E S_m} \Sigma_{J_m \times J_m}w_{J_m} \leq 16c_0\rho^{-1}k^\frac{7}{2}\rho'^2 \Delta. 
\end{align*}
By applying a standard $\varepsilon$-net argument with  a union bound, we  obtain  with probability at least $1-2ke^k\ebr{-(\Delta^2 \wedge n)}$ that
\begin{align} \label{eq:unism}
\sup_{w\in\mathr^r:\norm{w}\leq 1}\sum_{m\in[s]}  e_i^T  \br{S_m - \E S_m} \Sigma_{J_m \times J_m}w_{J_m} \leq 32c_0\rho^{-1}k^\frac{7}{2}\rho'^2 \Delta.
\end{align}
We denote by $\mathcal{H}_{i}$ the event where \eqref{eq:unism} above holds and note that $\mathbb{P}(\mathcal{H}_i)\geq 1-2ke^k\ebr{-(\Delta^2 \wedge n)}$. 
To avoid that $\Delta \wedge n$ (instead of $\Delta$) appears in the convergence rate we further introduce the global event $$\mathcal{H}_G:= \left \{ \{ \Delta > \sqrt{n} \} \bigcap_{i=1}^n \mathcal{H}_i  \right \} \cup \{ \Delta \leq  \sqrt{n} \}  $$
and note that 
$\mathbb{P}(\mathcal{H}_G) \geq 1-2nke^{-n+k}$. 
%
We are finally ready to bound $A_{i,a}$. Indeed, by the above we obtain that 
\begin{align*}
 \mathbb{E}A_{i,a}\mathbb{I}(\mathcal{F} \cap \mathcal{H}_G)  \leq &  \mathbb{E} \mathbb{I}\Bigg\{\frac{\br{1- 16\sqrt{2}\rho'k\rho^{-1}}\br{1-  c_1\rho'' - \frac{c_2k^2\rho\beta^{-\frac{1}{2}} \eta }{\Delta}}}{\br{1+6\rho^{-1}}\br{1+2\rho'^{-1}}} \Delta \\ & - 32c_0\rho^{-1}k^\frac{7}{2}\rho'^2 \Delta     \leq 2\norm{U_{1:r}^T E\br{I - VV^T}e_i}\Bigg\} \\& +\mathbb{E}\mathbb{I}(\mathcal{H}_i \cap  \{ \Delta < \sqrt{n} \} ). 
\end{align*}
 We observe that $U_{1:r}^T E\br{I - VV^T}e_i \sim \mathn\br{0,\norm{\br{I - VV^T}e_i}^2I_{r\times r}}$. Moreover, since $\norm{\br{I - VV^T}e_i}\leq 1$, we have that  $$\mathbb{P}\left (  \norm{U_{1:r}^T E\br{I - VV^T}e_i}^2 > t \right ) \leq \mathbb{P} (\xi_i > t), $$ 
where by $\xi_i$ we denote a chi-square distributed random variable with $k$ degrees of freedom. 
%
Hence, assuming additionally that $\rho'\rightarrow\infty$, $\rho/(k^{7/2}\rho'^2)\rightarrow\infty$ and $\Delta /(k^2\rho\beta^{-1/2} \eta )\rightarrow\infty$, there exists a constant $c_3>0$, such that 
\begin{align*}
& \mathbb{E} A_{i,a}\mathbb{I}(\mathcal{F} \cap \mathcal{H}_G) \\  \leq &  \indic{\br{1 - c_3 \rho'' - \frac{c_3k^2\rho\beta^{-\frac{1}{2}} \eta }{\Delta} - \frac{c_3k^\frac{7}{2}\rho'^2}{\rho}}\Delta \leq  2 \sqrt{\xi_i}}+2ke^{-\Delta^2+k} 
 \\ \leq & \ebr{-\frac{1}{8}\br{1 - c_3 \rho'' - \frac{c_3k^2\rho\beta^{-\frac{1}{2}} \eta }{\Delta} - \frac{c_3k^\frac{7}{2}\rho'^2}{\rho}-\frac{2\sqrt{k}}{\Delta}}^2\Delta^2}+2ke^{-\Delta^2+k},
\end{align*}
where we used Jensen's inequality and Borell's inequality (e.g. Theorem 2.2.7 in \cite{GineNickl16book})  to bound $\mathbb{P}(\sqrt{\xi_i} > t)\leq \exp(-(t-\sqrt{k})^2)$. 

\subsubsection{Upper Bounds on $\E B_{i,a}$}\label{subsubsec:upper_B}
We now bound
$$B_{i,a}:=\indic{\rho''\Delta^2 \leq 2\iprod{\hat P_\cd{i}^\tpt }{ \hat \theta_a^\tpt - \hat \theta^\tpt_{z^*_i}}  }.$$
We recall that $\hat P_\cd{i}^\tpt =({\hat U_{\br{r+1}:k}\hat U_{\br{r+1}:k}^T})\hat P_\cd{i} = \sum_{l=r+1}^k \hat u_l \hat Y_{l,i} =   \sum_{l=r+1}^k \hat u_l \hat\sigma_l\hat V_{i,l}$  and $ \hat \theta_a^\tpt - \hat \theta^\tpt_{z^*_i} = ({\hat U_{\br{r+1}:k}\hat U_{\br{r+1}:k}^T})({\hat \theta_a - \hat \theta_{z^*_i}})$. Hence, we obtain that 
\begin{align*}
\iprod{\hat P_\cd{i}^\tpt }{ \hat \theta_a^\tpt - \hat \theta^\tpt_{z^*_i}} 
=  \sum_{l=r+1}^k\hat\sigma_l \hat V_{i,l} \br{\hat u_l^T \hat \theta_a - \hat u_l^T \hat \theta_{z^*_i}}.
\end{align*}
Note that $|\hat u_l^T \hat \theta_a - \hat u_l^T \hat \theta_{z^*_i}| \leq 2\max_{j\in[k]}\max_{r+1\leq l\leq k} |{\langle\hat u_l},{\hat \theta_j} \rangle|$.
Using   (\ref{eqn:hat_sigma_max}) and (\ref{eqn:iprod_hat_u_hat_theta}) , we have that
\begin{align}\label{eqn:iprod_hat_P_hat_theta}
&\abs{\iprod{\hat P_\cd{i}^\tpt }{ \hat \theta_a^\tpt - \hat \theta^\tpt_{z^*_i}} } 
\leq 2\br{k\rho + 4}^2 \sqrt{\frac{2nk}{\beta}\br{1+\frac{p}{n}}^2}\sum_{l=r+1}^k\abs{\hat V_{i,l}}
\end{align}
and hence we bound
%
\begin{align*}
\sum_{a \neq  z_i^*} B_{i,a}\mathbb{I}(\mathcal{F} \leq & k \indic{  \rho''\Delta^2  \leq 4\br{k\rho + 4}^2 \sqrt{\frac{2nk}{\beta}\br{1+\frac{p}{n}}^2}\sum_{l=r+1}^k\abs{\hat V_{i,l}} } \\
\leq & k \sum_{l=r+1}^k \indic{  \rho''\Delta^2  \leq 4k\br{k\rho + 4}^2 \sqrt{\frac{2nk}{\beta}\br{1+\frac{p}{n}}^2}\abs{\hat V_{i,l}} }=:k \sum_{l=r+1}^k C_{i,l}.
\end{align*}

We bound each $C_{i,l}$ separately, by showing that $\hat V_{i,l}$ is, approximately, univariate Gaussian with variance $1/n$. 
We first apply Proposition \ref{prop:V_pop} to obtain that 
\begin{align*}
&\abs{\hat V_{i,l}} 
\leq \|V^Te_i\| + \abs{e_i^T {\br{I - VV^T}\hat v_l} }  \leq  \sqrt{\beta^{-1}k/n}+ \abs{e_i^T {\br{I - VV^T}\hat v_l} }. 
\end{align*}
Hence, assuming that $\Delta^2\rho''/(k^4 \rho^2 \beta^{-1} (1+p/n) )$ is large enough and afterwards applying Lemma \ref{lem:hat_V_normal}, we obtain that for some constant $c_4>0$
\begin{align*}
C_{i,l} & \leq  \indic{c_4  \frac{\rho''\Delta^2}{k^\frac{7}{2}\rho^2 \beta^{-\frac{1}{2}} (1+\frac{p}{n})}  \leq  \sqrt{n}\frac{\abs{e_i^T {\br{I - VV^T}\hat v_l}}}{\norm{ {\br{I - VV^T}\hat v_l}}}   } \\
& \overset{d}{=} \indic{c_4  \frac{\rho''\Delta^2}{k^\frac{7}{2}\rho^2 \beta^{-\frac{1}{2}} (1+\frac{p}{n})}  \leq  \sqrt{n}\frac{\abs{e_i^T {\br{I - VV^T}\zeta_{i,l} }}}{\norm{ {\br{I - VV^T}\zeta_{i,l} }}}    },
\end{align*}
where $\overset{d}{=}$ denotes equality in distribution and where $\zeta_{i,l} {\thicksim} \mathn(0,I_n)$. 
We next provide a lower bound for the denominator above. Indeed, since $ (I - VV^T)\zeta_{i,l} \thicksim \mathcal{N}(0, (I-VV^T))$, we see that $\|(I-VV^T)\zeta_{i,l}\|^2$ is chi-square distributed with at least $n-k$ degrees of freedom. 
Hence, using tail-bounds for the lower tail of chi-square distributed random variables (e.g. Lemma 1 in \cite{LaurentMassart00}) we obtain that the event $\mathcal{T}$ defined below occurs with high probability, i.e.,
\begin{align}
    \label{eq:lowertail}
    \mathbb{P} \left ( \mathcal{T} \right ) := \mathbb{P} \left ( \bigcap_{i,l} \left \{ \|(I-VV^T)\zeta_{i,l}\|^2 \geq \frac{(n-k)}{3} \right \}  \right ) \geq 1-nk\exp \left ( - \frac{(n-k)}{9}\right ).
\end{align}
Hence, working on the event $\mathcal{T}\cap \mathcal{F}$, we bound
\begin{align*}
\mathbb{E} C_{i,l} \mathbb{I}(\mathcal{T} \cap \mathcal{F}) & \leq \mathbb{E} \indic{c_4  \frac{\rho''\Delta^2}{k^\frac{7}{2}\rho^2 \beta^{-\frac{1}{2}} (1+\frac{p}{n})}  \sqrt{\frac{n-k}{3n}} \leq \abs{e_i^T {\br{I - VV^T}\zeta_{i,l} }}   } \\ 
 &\leq 2 \ebr{-\frac{1}{2} \left ( c_4 \frac{\rho''\Delta}{k^\frac{7}{2}\rho^2 \beta^{-\frac{1}{2}} (1+\frac{p}{n})}  \sqrt{\frac{n-k}{3n}} \right )^2 \Delta^2} ,
\end{align*}
where we used  that $e_i^T(I-VV^T \zeta_{i,l})$ is univariate Gaussian with variance bounded by $1$. 
\subsubsection{Obtaining the final Result}\label{subsubsec:final}
 Combining the above upper bounds together, we have that 
\begin{align*}
& \E \ell(\hat z,z^*) \indic{\mathcal{F} \cap \mathcal{H}_G \cap\mathcal{T}} \\  \leq & \frac{1}{n}\sum_{i=1}^n \sum_{a \neq z_i^*} \E A_{i,a} \indic{\mathcal{F} \cap \mathcal{H}_G}+ \frac{k}{n}\sum_{i=1}^n \sum_{l=r+1}^k \E C_{i,l} \indic{\mathcal{F} \cap \mathcal{T}}\\
& \leq k \ebr{-\frac{1}{8}\br{1 - c_3 \rho'' - \frac{c_3k^2\rho\beta^{-\frac{1}{2}} \eta }{\Delta} - \frac{c_3k^\frac{7}{2}\rho'^2}{\rho}-\frac{2\sqrt{k}}{\Delta}}^2\Delta^2}+2k^2e^{-\Delta^2+k}, \\
&\quad + 2 k^2  \ebr{-\frac{1}{2} \left ( c_4 \frac{\rho''\Delta}{k^\frac{7}{2}\rho^2 \beta^{-\frac{1}{2}}\eta^2 }  \sqrt{\frac{n-k}{3n}} \right )^2 \Delta^2} .
\end{align*} 
Since, by assumption, 
$\frac{\Delta}{k^{10.5}\beta^{-0.5} \eta^2 \left ({\frac{n-k}{n}} \right )^{0.5}} \rightarrow \infty $, recalling that $\eta=\sqrt{1+p/n}$ and denoting $\lambda=\left ({\frac{n-k}{n}} \right )^{-0.5}$
we can choose
\begin{align*}
\rho = \frac{k^\frac{7}{2}}{8c_3} \br{\frac{\Delta}{k^{10.5}\beta^{-0.5} \eta^2 \lambda  }}^{0.3}, \quad \rho' = \frac{1}{8c_3}\br{\frac{\Delta}{k^{10.5}\beta^{-0.5} \eta^2 \lambda }}^{0.1},\text{ and }\rho'' = \frac{1}{8c_3}\br{\frac{\Delta}{k^{10.5}\beta^{-0.5} \eta^2\lambda  }}^{-0.1},
\end{align*}
to obtain that 
\begin{align*}
\E \ell(\hat z,z^*) \indic{\mathcal{F} \cap \mathcal{H}_G\cap \mathcal{T}} \leq n\ebr{ - \br{1- \frac{1}{2} \br{\frac{\Delta}{k^{10.5}\beta^{-0.5} \eta^2 \lambda  }}^{-0.1}} \frac{\Delta^2}{8}} .
\end{align*}
Applying Markov's inequality, we obtain that 
\begin{align*}
 \ell(\hat z,z^*) \indic{\mathcal{F} \cap \mathcal{H}_G\cap \mathcal{T}}   \leq \ebr{ - \br{1-  \br{\frac{\Delta}{k^{10.5}\beta^{-0.5} \eta^2 \lambda  }}^{-0.1}} \frac{\Delta^2}{8}} ,
\end{align*}
with probability at least $1-\ebr{-\Delta}$.  Finally, the proof is completed by using a union bound accounting for the events $\mathcal{F},\mathcal{H}_G$ and $\mathcal{T}$. \\


\textbf{Acknowledgments.} We would like to thank  Zhou Fan from Yale University for pointing out the references \cite{paul2007asymptotics,johnstone2018pca}. We are further grateful to three anonymous referees and an associate editor for careful reading of the manuscript and their valuable remarks and suggestions.

\begin{supplement}
\sname{Supplement A}\label{suppA}
\stitle{Supplement to ``Optimality of Spectral Clustering in the Gaussian Mixture Model''}
\slink[url]{url to be specified}
\sdescription{In the supplement \cite{supplement}, we first present some propositions that characterize the population quantities in Appendix \ref{subsec:support_population}. Then  in Appendix \ref{subsec:auxiliary_lemma}, we give several auxiliary lemmas related to the noise matrix $E$.
In Appendix \ref{subsec:proof_important_lemma}, we include proofs of Lemma \ref{lem:equivalence}, Lemma \ref{lem:theta_dist} and Lemma \ref{lem:hat_V_normal}. We give an extension of Proposition \ref{prop:cons} in Appendix \ref{apx:prop_cons_extension} and prove Theorem \ref{thm:1_epsilon_approx} in Appendix \ref{apx:1_eps}.
The proof of Lemma \ref{lem:VV_pert} is given in Appendix \ref{apx:VV_pert}.
}
\end{supplement}

\bibliographystyle{plainnat}
\bibliography{spectral}

\newpage
\thispagestyle{empty}
\setcounter{page}{1}
\begin{center}
\MakeUppercase{\large Supplement to ``Optimality of Spectral Clustering in the Gaussian Mixture Model''}
\medskip

{BY Matthias L\"offler, Anderson Y. Zhang and Harrison H.~Zhou}
\medskip

{ETH Z\"urich, University of Pennsylvania and Yale University}
\end{center}

\appendix

\section{Characteristics of the Population Quantities} \label{subsec:support_population}
In this section, we present several propositions that characterize the population quantities defined in Section \ref{subsec:population}. We first define two matrices related to $z^*$. Let $D\in\mathr^{k\times k}$ be a diagonal matrix with
\begin{align*}
D_{j,j} = \abs{\cbr{i\in[n]:z^*_i = j}},~ j\in[k],
\end{align*}
and let $Z^*\in\cbr{0,1}^{n\times k}$ be a matrix such that
\begin{align}
Z^*_{i,j} = \indic{z^*_i = j}, ~ i\in[n],j\in[k].\label{eqn:Z_star_def}
\end{align}

\begin{proposition} \label{prop:V_form}
There exists an orthogonal matrix $W\in\mathr^{k\times k}$ such that
\begin{align*}
V = Z^* D^{-\frac{1}{2}} W.
\end{align*}
Consequently, $V_\cdi{i} = V_\cdi{j}$ for all $i,j\in[n]$ such that $z^*_i = z^*_j$ 
In addition, we have that 
\begin{align*}
\sigma_1 \geq \sqrt{\frac{\beta n}{k}} \frac{\Delta}{2}.
\end{align*}
\end{proposition}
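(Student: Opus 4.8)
The plan is to write the population matrix $P=\E X$ in factored form and read off an SVD directly. Since $P$ has columns $P_\cd{i}=\theta^*_{z^*_i}$, we have $P=\Theta^*(Z^*)^T$, where $\Theta^*:=(\theta^*_1,\ldots,\theta^*_k)\in\mathr^{p\times k}$ collects the centers and $Z^*$ is the membership matrix from \eqref{eqn:Z_star_def}. A one-line computation gives $(Z^*)^TZ^*=D$, so the $n\times k$ matrix $Q:=Z^*D^{-1/2}$ has orthonormal columns ($D$ is invertible because $\beta>0$ forces every cluster to be nonempty). Writing $P=(\Theta^*D^{1/2})Q^T$ and taking an SVD $\Theta^*D^{1/2}=U\Sigma W^T$ of the $p\times k$ matrix $\Theta^*D^{1/2}$, with $\Sigma=\mathrm{diag}(\sigma_1,\ldots,\sigma_k)$, $\sigma_1\ge\cdots\ge\sigma_k\ge 0$ (padding with zero singular values if $\mathrm{rank}(\Theta^*)<k$) and $W\in\mathr^{k\times k}$ orthogonal, we obtain $P=U\Sigma(QW)^T$. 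As $QW$ again has orthonormal columns, this is a valid SVD of $P$; since the singular values are intrinsic to $P$, we may take it as the SVD fixed in Section~\ref{subsec:population}, which gives $V=QW=Z^*D^{-1/2}W$ with $W$ orthogonal.

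For the statement about equal rows I would just compute: the $i$-th row of $Z^*D^{-1/2}$ is $e_{z^*_i}^TD^{-1/2}=D_{z^*_i,z^*_i}^{-1/2}e_{z^*_i}^T$, hence the $i$-th row of $V=Z^*D^{-1/2}W$ equals $D_{z^*_i,z^*_i}^{-1/2}W_{z^*_i,\cdot}$, which depends on $i$ only through $z^*_i$. In particular $V_\cdi{i}=V_\cdi{j}$ whenever $z^*_i=z^*_j$.

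For the bound on $\sigma_1=\Opnorm{P}$ I would exhibit a good test vector. Recalling from \eqref{eqn:beta_def} that every cluster has size at least $\beta n/k$, pick $a\neq b$ in $[k]$ attaining the minimum in \eqref{eqn:delta}, so $\norm{\theta^*_a-\theta^*_b}=\Delta$, and write $S_a=\{i\in[n]:z^*_i=a\}$, $S_b=\{i\in[n]:z^*_i=b\}$. Define $w\in\mathr^n$ by $w_i=\abs{S_a}^{-1}$ for $i\in S_a$, $w_i=-\abs{S_b}^{-1}$ for $i\in S_b$, and $w_i=0$ otherwise. Then $Pw=\theta^*_a-\theta^*_b$, so $\norm{Pw}=\Delta$, while $\norm{w}^2=\abs{S_a}^{-1}+\abs{S_b}^{-1}\le 2k/(\beta n)$. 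Therefore $\sigma_1\ge\norm{Pw}/\norm{w}\ge\Delta\sqrt{\beta n/(2k)}\ge\sqrt{\beta n/k}\,\Delta/2$.

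I do not anticipate a serious obstacle; the only points that need a little care are that when $\mathrm{rank}(P)<k$ (for instance if some centers coincide or are collinear, or $p<k$) the right singular vectors attached to zero singular values must be supplied by the explicit construction above rather than by any uniqueness of the SVD, and that the weights in $w$ have to be chosen so that the cluster-size prefactors cancel exactly, leaving the difference $\theta^*_a-\theta^*_b$.
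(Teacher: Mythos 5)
Your proposal is correct and follows essentially the same route as the paper: both factor $P=\Theta^*(Z^*)^T=(\Theta^*D^{1/2})(Z^*D^{-1/2})^T$, observe that $Z^*D^{-1/2}$ has orthonormal columns, and plug in the SVD of $\Theta^*D^{1/2}$ to identify $V=Z^*D^{-1/2}W$. The only (minor) difference is in the singular-value bound, where you exhibit a test vector $w\in\mathr^n$ directly on $P$, while the paper first lower-bounds $\|\Theta^*\|$ by $\Delta/2$ and then pushes this through $D^{1/2}$ using $\sigma_{\min}(D^{1/2})\geq\sqrt{\beta n/k}$; both are elementary and give the stated bound.
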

\begin{proof}
First note that
\begin{align*}
P = \br{\theta^*_1,\ldots, \theta^*_k} Z^{*T} = \br{\theta^*_1,\ldots, \theta^*_k} D^\frac{1}{2} D^{-\frac{1}{2}}Z^{*T}= \br{\theta^*_1,\ldots, \theta^*_k} D^\frac{1}{2} \br{Z^*D^{-\frac{1}{2}}}^{T},
\end{align*}
and observe that $Z^*D^{-\frac{1}{2}}$ has orthonormal columns. Now, we decompose $\br{\theta^*_1,\ldots, \theta^*_k} D^\frac{1}{2} = U \Lambda W^T$ into its SVD. Here $W$ is some orthonomal matrix $W\in\mathr^{k\times k}$. Then we have that 
\begin{align*}
P = U \Lambda\br{Z^*D^{-\frac{1}{2}} W}^{T},
\end{align*}
with $Z^*D^{-\frac{1}{2}} W$ having orthonormal columns. Hence, we have that $\Sigma = \Lambda$ and $V = Z^*D^{-\frac{1}{2}} W$. The structure of $Z^*$ leads to the second statement presented in the proposition. Indeed, due to  (\ref{eqn:delta}), the largest singular value of $\br{\theta^*_1,\ldots, \theta^*_k}$ must be greater than $\Delta/2$. Since $\br{\theta^*_1,\ldots, \theta^*_k} D^\frac{1}{2} = U \Sigma W^T$, we obtain that 
\begin{align*}
\sigma_1 \geq \sqrt{\frac{\beta n}{k}} \frac{\Delta}{2}.
\end{align*}
\end{proof}

\begin{proposition}\label{prop:V_pop}
The matrix $V$ satisfies
\begin{align*}
\max_{i\in[n]} \|V^Te_i\| \leq \sqrt{\frac{ k}{\beta n}}.
\end{align*}
\end{proposition}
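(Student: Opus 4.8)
The plan is to leverage the explicit form of $V$ furnished by Proposition \ref{prop:V_form}, namely $V = Z^* D^{-1/2} W$ with $W\in\mathr^{k\times k}$ orthogonal, $Z^*$ the membership indicator matrix from \eqref{eqn:Z_star_def}, and $D$ the diagonal matrix of cluster sizes. Since $W$ is orthogonal, left-multiplication by $W^T$ preserves Euclidean norms, so the first step is to reduce to $\|V^T e_i\| = \|W^T D^{-1/2}(Z^*)^T e_i\| = \|D^{-1/2}(Z^*)^T e_i\|$.

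Next I would compute $(Z^*)^T e_i$ directly: by \eqref{eqn:Z_star_def}, the $i$-th row of $Z^*$ has a single nonzero entry, a $1$ in coordinate $z^*_i$, so $(Z^*)^T e_i = e_{z^*_i}$ (the $z^*_i$-th standard basis vector in $\mathr^k$). Applying $D^{-1/2}$ then gives the vector $D_{z^*_i,z^*_i}^{-1/2}\, e_{z^*_i}$, whose norm is exactly $D_{z^*_i,z^*_i}^{-1/2}$. Hence $\|V^T e_i\| = D_{z^*_i,z^*_i}^{-1/2}$.

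Finally, I would bound $D_{z^*_i,z^*_i}$ below using the definition of $\beta$ in \eqref{eqn:beta_def}: $D_{z^*_i,z^*_i} = \abs{\cbr{j\in[n]:z^*_j = z^*_i}} \geq \min_{l\in[k]}\abs{\cbr{j\in[n]:z^*_j = l}} = \beta n / k$, which yields $\|V^T e_i\| \leq \sqrt{k/(\beta n)}$ for every $i\in[n]$, and taking the maximum over $i$ completes the argument. There is no real obstacle here: the statement is an immediate corollary of the structural description of $V$ in Proposition \ref{prop:V_form} together with the definition of the minimal cluster-size parameter $\beta$; the only thing to be careful about is that the orthogonality of $W$ is used on the \emph{left} (not that $V$ has orthonormal rows, which it does not when $p<n$), so the norm-preservation step must be phrased in terms of $W^T$ acting on the vector $D^{-1/2}(Z^*)^T e_i\in\mathr^k$.
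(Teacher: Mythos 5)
Your proposal is correct and follows essentially the same route as the paper: both reduce to the structural form $V = Z^* D^{-1/2} W$ from Proposition \ref{prop:V_form}, invoke orthogonality of $W$, and then bound the resulting quantity via $\min_j D_{j,j} \geq \beta n/k$. The only (minor) difference is that you compute $\|V^T e_i\| = D_{z^*_i,z^*_i}^{-1/2}$ exactly before bounding, whereas the paper bounds $\|D^{-1/2}(Z^*)^T e_i\|$ in one step; both are equally valid.
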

\begin{proof}
By Proposition \ref{prop:V_form} we have that
\begin{align*}
\|V^Te_i\| = \|W^T D^{-1/2} (Z^*)^Te_i\|=\| D^{-1/2} (Z^*)^Te_i\|,
\end{align*}
where we used that $W$ is orthogonal. 
Hence, we obtain that 
\begin{align*}
\max_{i\in[n]} \|V^Te_i\|  \leq  \frac{1}{\min_{j\in[k]} D^\frac{1}{2}_{j,j}} \| (Z^*)^Te_i\| 
= \sqrt{\frac{k}{\beta n}}.
\end{align*}
\end{proof}

\begin{proposition}\label{prop:iprod_theta_u}
We have that 
\begin{align*}
 \abs{\iprod{u_l}{\theta^*_j}} \leq \sigma_l\sqrt{\frac{ k}{\beta n}},~\forall {j,l\in[k]}.
\end{align*}
\end{proposition}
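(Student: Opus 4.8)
The plan is to read off an explicit formula for $\iprod{u_l}{\theta^*_j}$ from the SVD identity established in Proposition \ref{prop:V_form} and then bound the two factors appearing in it. Recall from the proof of Proposition \ref{prop:V_form} that $\br{\theta^*_1,\ldots,\theta^*_k}D^{\frac12} = U\Sigma W^T$ for an orthogonal matrix $W\in\mathr^{k\times k}$, where $D$ is the diagonal matrix of cluster sizes. First I would multiply this identity on the right by the standard basis vector $e_j$. Since $D$ is diagonal, $D^{\frac12}e_j = \sqrt{D_{j,j}}\,e_j$, and hence $\sqrt{D_{j,j}}\,\theta^*_j = U\Sigma W^T e_j = \sum_{l\in[k]} \sigma_l W_{j,l}\, u_l$, using that the $l$-th column of $U\Sigma$ is $\sigma_l u_l$.

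Next, taking the inner product with $u_l$ and using that $\cbr{u_l}_{l\in[k]}$ are orthonormal, I obtain the exact identity
\begin{align*}
\iprod{u_l}{\theta^*_j} = \frac{\sigma_l W_{j,l}}{\sqrt{D_{j,j}}}, \qquad \forall j,l\in[k].
\end{align*}
It then remains to bound $|W_{j,l}|$ and $D_{j,j}$. Since $W$ is orthogonal, each of its entries has absolute value at most $1$ (being a coordinate of a unit vector), so $|W_{j,l}|\leq 1$. Moreover, by the definition of $\beta$ in \eqref{eqn:beta_def}, $D_{j,j} = |\cbr{i\in[n]:z^*_i=j}| \geq \beta n/k$ for every $j\in[k]$. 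Combining these two bounds gives
\begin{align*}
\abs{\iprod{u_l}{\theta^*_j}} = \frac{\sigma_l |W_{j,l}|}{\sqrt{D_{j,j}}} \leq \frac{\sigma_l}{\sqrt{\beta n/k}} = \sigma_l\sqrt{\frac{k}{\beta n}},
\end{align*}
which is the claimed inequality.

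There is no real obstacle here: the argument is a short consequence of the structural decomposition already recorded in Proposition \ref{prop:V_form}. The only point worth stating carefully is that $|W_{j,l}|\le 1$ for an orthogonal $W$, and that the lower bound $D_{j,j}\ge \beta n/k$ is exactly the content of the definition of $\beta$; once these are noted the bound follows immediately.
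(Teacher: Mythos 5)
Your proof is correct and takes essentially the same route as the paper's. The paper expresses $\iprod{u_l}{\theta^*_j} = \sigma_l V_{i,l}$ (for any $i$ with $z^*_i=j$) and then quotes Proposition \ref{prop:V_pop}, whose proof unpacks $V = Z^*D^{-1/2}W$; you instead read the same bound directly off the factorization $(\theta^*_1,\ldots,\theta^*_k)D^{1/2} = U\Sigma W^T$ from the proof of Proposition \ref{prop:V_form}, which amounts to the identical computation without routing through $V$.
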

\begin{proof}
Since $P= U\Sigma V^T$ and $P_{\cd{i}} = \theta^*_{z^*_i},~ i\in[n]$, we have for any $u,l\in[k]$ that 
\begin{align*}
\iprod{u_l}{\theta^*_j} = \sigma_l V_{i,l},\text{ where }i\in[n]\text{ is any index such that }z^*_i =j.
\end{align*}
The proof is completed by applying Proposition \ref{prop:V_pop}.
\end{proof}

\section{Auxiliary Lemmas Related to the Noise Matrix $E$} \label{subsec:auxiliary_lemma}
In this section, we present three basic lemmas for the control of the noise term $E$ and empirical singular values and vectors, used in the proof of Theorem \ref{thm:main}. 
\begin{lemma}\label{lem:E_opnorm}
	For a random matrix $E \in\mathr^{p\times n}$ with $\cbr{E_{i,j}}\iid \mathn\br{0,1}$, define the event $\mathcal{F}=\{\|E\| \leq  \sqrt{2} (\sqrt{n}+\sqrt{p}) \}$. We have that 
	\begin{align*}
\mathbb{P} \left ( \|E\| \geq \sqrt{n}+\sqrt{p}+t\right ) \leq e^{-t^2/2}
	\end{align*}
	and particularly
	\begin{align*}
	\mathbb{P} \left ( \mathcal{F} \right ) \geq 1- e^{-0.08n}.
	\end{align*}
\end{lemma}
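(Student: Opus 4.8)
The plan is to combine a bound on the expected operator norm of a Gaussian matrix with Gaussian concentration of measure. First I would invoke the classical estimate $\E\opnorm{E} \leq \sqrt{n}+\sqrt{p}$, valid for any $p\times n$ matrix with i.i.d.\ $\mathn(0,1)$ entries. This follows from Gordon's Gaussian comparison inequality (equivalently Chevet's inequality): writing $\opnorm{E} = \sup_{u\in S^{p-1},\, v\in S^{n-1}} u^T E v$, the centered Gaussian process $(u,v)\mapsto u^T E v$ is dominated, in the sense required by Gordon's lemma, by the process $(u,v)\mapsto g^T u + h^T v$ with $g\sim\mathn(0,I_p)$ and $h\sim\mathn(0,I_n)$ independent, whose expected supremum over $S^{p-1}\times S^{n-1}$ equals $\E\norm{g}+\E\norm{h}\leq\sqrt{p}+\sqrt{n}$.

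Second, I would use that the map $E\mapsto\opnorm{E}$ is $1$-Lipschitz with respect to the Frobenius norm, since $\abs{\opnorm{E}-\opnorm{E'}} \leq \opnorm{E-E'} \leq \fnorm{E-E'}$. Identifying the entries of $E$ with a standard Gaussian vector in $\mathr^{pn}$ carrying its Euclidean (equivalently Frobenius) metric, the Gaussian concentration inequality (Borell's inequality) states that for any $1$-Lipschitz $f$ and any $t\geq 0$ one has $\p(f(E)\geq\E f(E)+t)\leq e^{-t^2/2}$. Applying this with $f=\opnorm{\cdot}$ and combining with the first step yields
\[
\p\br{\opnorm{E}\geq\sqrt{n}+\sqrt{p}+t} \leq \p\br{\opnorm{E}\geq\E\opnorm{E}+t} \leq e^{-t^2/2},
\]
which is the first assertion of the lemma.

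For the second assertion I would take $t=(\sqrt{2}-1)(\sqrt{n}+\sqrt{p})$, so that $\sqrt{n}+\sqrt{p}+t=\sqrt{2}(\sqrt{n}+\sqrt{p})$, and obtain from the first assertion
\[
\pbr{\mathcal{F}^c} \leq \ebr{-\tfrac{1}{2}(\sqrt{2}-1)^2(\sqrt{n}+\sqrt{p})^2} = \ebr{-\tfrac{3-2\sqrt{2}}{2}(\sqrt{n}+\sqrt{p})^2}.
\]
Since $(\sqrt{n}+\sqrt{p})^2\geq n$ and $(3-2\sqrt{2})/2 \approx 0.0858 \geq 0.08$, the right-hand side is at most $e^{-0.08n}$, which completes the proof.

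There is no genuine obstacle here; this is a textbook fact about Gaussian matrices. The only points requiring care are quoting the precise forms of the two ingredients---the $\sqrt{n}+\sqrt{p}$ bound on $\E\opnorm{E}$ and the $1$-Lipschitz Gaussian concentration inequality---and verifying the elementary numerical inequality $(3-2\sqrt{2})/2\geq 0.08$.
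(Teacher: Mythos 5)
Your proof is correct and follows essentially the same route as the paper: bound $\E\opnorm{E}\leq\sqrt{n}+\sqrt{p}$ (the paper cites Davidson--Szarek, which is proved exactly by the Gordon/Chevet comparison you sketch) and then apply Gaussian concentration (Borell's inequality) to the $1$-Lipschitz map $E\mapsto\opnorm{E}$. Your explicit numerical verification of $(3-2\sqrt{2})/2\geq 0.08$ for the second assertion is a welcome detail the paper leaves implicit.
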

\begin{proof}
	By Theorem 2.13 in \cite{DavidsonSzarek01} we have that $\mathbb{E}\|E\| \leq \sqrt{n}+\sqrt{p}$. Moreover, as $\|E\|=\sup_{\|u\|=\|v\|=1} \langle u, E v \rangle$, we have by Borell's inequality (e.g. Theorem 2.2.7 in \cite{GineNickl16book}) that $\mathbb{P} \left ( \|E\| \geq \mathbb{E} \|E\| +t\right ) \leq e^{-t^2/2}$. 
	\end{proof}
Weyl’s inequality (e.g. Theorem 4.3.1 of \citep{horn2012matrix}), the fact that $X = P  + E$ and Lemma \ref{lem:E_opnorm} imply the following lemma.
\begin{lemma}\label{lem:singular_diff}
	Assume that the random event $\mathcal{F}$ holds. We have that
	\begin{align*}
	\hat \sigma_j \leq \sigma_j + \sqrt{2}(\sqrt{n}+\sqrt{p}),~~\forall j\in[k].
	\end{align*}
\end{lemma}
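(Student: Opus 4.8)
The plan is to obtain Lemma~\ref{lem:singular_diff} as an immediate consequence of Weyl's perturbation inequality for singular values, combined with the operator-norm bound on the noise matrix already recorded in Lemma~\ref{lem:E_opnorm}. First I would recall from Section~\ref{subsec:population} the decomposition $X = P + E$, so that the data matrix is an additive perturbation of the population matrix $P$ by the Gaussian noise matrix $E$, with perturbation size measured in operator norm by $\opnorm{E}$. Since $P$ has rank at most $k$, it is convenient to extend its list of singular values by $\sigma_j := 0$ for $j > \rank{P}$, consistently with the convention $\sigma_{k+1} = 0$ used elsewhere; this makes $\sigma_1 \geq \dots \geq \sigma_k \geq 0$ well defined for all $j \in [k]$.

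Next I would apply Weyl's inequality for singular values (Theorem~4.3.1 of \cite{horn2012matrix}): for the two matrices $X = P + E$ and $P$ of the same dimensions, their ordered singular values satisfy
$$\abs{\hat \sigma_j - \sigma_j} \leq \opnorm{X - P} = \opnorm{E}, \qquad \forall\, j \in [k].$$
If one prefers, this can be seen by applying the Hermitian Weyl inequality to the symmetric dilations of $X$ and $P$, whose nonzero eigenvalues are $\pm$ the singular values. In particular $\hat\sigma_j \leq \sigma_j + \opnorm{E}$ for every $j \in [k]$.

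Finally, it remains only to substitute the deterministic bound that holds on the event $\mathcal{F}$. By definition $\mathcal{F} = \{\opnorm{E} \leq \sqrt{2}(\sqrt{n}+\sqrt{p})\}$, so on $\mathcal{F}$ we obtain $\hat\sigma_j \leq \sigma_j + \sqrt{2}(\sqrt{n}+\sqrt{p})$ for all $j \in [k]$, which is exactly the claim; no probabilistic estimate is needed here, since the probability of $\mathcal{F}$ was already quantified in Lemma~\ref{lem:E_opnorm}. There is essentially no obstacle: the only points worth a sentence of care are the indexing convention for the singular values of the at-most-rank-$k$ matrix $P$ and the (standard) applicability of Weyl's inequality to rectangular matrices; all the substantive probabilistic content lives in Lemma~\ref{lem:E_opnorm}, which is assumed.
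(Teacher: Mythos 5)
Your proof is correct and matches the paper's own argument exactly: the paper derives Lemma~\ref{lem:singular_diff} directly from Weyl's inequality for singular values applied to $X = P + E$, followed by the operator-norm bound $\opnorm{E} \leq \sqrt{2}(\sqrt{n}+\sqrt{p})$ on the event $\mathcal{F}$ from Lemma~\ref{lem:E_opnorm}. Nothing further to add.
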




The last lemma included in this section is the Davis-Kahan-Wedin $sin(\Theta)$ Theorem, which characterizes the distance between empirical and population singular vector spaces. We refer readers to Theorem 21 of \citep{o2018random} for its proof.

\begin{lemma} [Davis-Kahan-Wedin $sin(\Theta)$ Theorem] \label{lem:wedin}
	Consider any rank-$s$ matrices $W,\hat  W$. Let $W = \sum_{i=1}^s \sigma_i u_i v_i^T$ be its SVD with $\sigma_1 \geq \ldots \geq \sigma_s$. Similarly, let  $\hat  W = \sum_{i=1}^s \hat  \sigma_i \hat  u_i \hat  v_i^T$ be its SVD with $\hat \sigma_1 \geq \ldots \geq \hat \sigma_s$. For any $1\leq j\leq l\leq s$, define $V= \br{v_j,\ldots, v_l}$ and $\hat  V = \br{\hat  v_j,\ldots, \hat  v_l}$. Then,
	we have that 
	\begin{align*}
	\inf_{O: \text{ orthogonal matrix}}\opnorm{\hat  V - VO} \leq \sqrt{2} \opnorm{\hat  V \hat  V^T -VV^T } \leq \frac{4\sqrt{2} \opnorm{\hat  W - W}}{\min\cbr{\sigma_{j-1} - \sigma_j, \sigma_l - \sigma_{l+1}}},
	\end{align*}
	where we denote $\sigma_0 = +\infty$ and $\sigma_{s+1}=0$.
\end{lemma}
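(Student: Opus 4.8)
This is a classical statement — the Davis--Kahan $\sin\Theta$ theorem combined with Wedin's extension to singular subspaces — so the plan is to assemble it from standard facts. One may assume $\delta:=\min\cbr{\sigma_{j-1}-\sigma_j,\ \sigma_l-\sigma_{l+1}}>0$, since the bound is otherwise vacuous, and write $d:=l-j+1$ for the common number of columns of $V$ and $\hat V$. For the first inequality I would argue purely geometrically about the $d$-dimensional subspaces $\operatorname{range}(V)$ and $\operatorname{range}(\hat V)$ of $\mathbb{R}^{n}$. Writing the SVD $V^{T}\hat V=A\,(\cos\Theta)\,B^{T}$, where $\cos\Theta=\operatorname{diag}(\cos\theta_1,\dots,\cos\theta_d)$ collects the cosines of the principal angles, and choosing $O=AB^{T}$, a short computation gives $(\hat V-VO)^{T}(\hat V-VO)=2B(I-\cos\Theta)B^{T}$, hence $\inf_{O}\opnorm{\hat V-VO}\le 2\sin(\theta_{\max}/2)$. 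Because $VV^{T}$ and $\hat V\hat V^{T}$ are orthogonal projections onto subspaces of equal dimension, the nonzero eigenvalues of their difference occur in pairs $\pm\sin\theta_i$, so $\opnorm{\hat V\hat V^{T}-VV^{T}}=\sin\theta_{\max}$; the elementary inequality $2\sin(t/2)\le\sqrt2\,\sin t$, valid for $t\in[0,\pi/2]$, then gives $\inf_O\opnorm{\hat V-VO}\le\sqrt2\,\opnorm{\hat V\hat V^{T}-VV^{T}}$.

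For the second inequality I would pass to Hermitian dilations. Let $\tilde W$ be the symmetric matrix with zero diagonal blocks and off-diagonal blocks $W$ and $W^{T}$, and let $\tilde{\hat W}$ be its analogue for $\hat W$; then $\opnorm{\tilde W-\tilde{\hat W}}=\opnorm{\hat W-W}$, the nonzero eigenvalues of $\tilde W$ are $\pm\sigma_1,\dots,\pm\sigma_s$, and the eigenvector of $\tilde W$ for $\sigma_i$ is $\tfrac1{\sqrt2}(u_i^{T},v_i^{T})^{T}$. Consequently the spectral projection $\Pi$ of $\tilde W$ onto the eigenvalues $\cbr{\sigma_j,\dots,\sigma_l}$ has lower-right $n\times n$ block equal to $\tfrac12 VV^{T}$, and likewise $\hat\Pi$ has lower-right block $\tfrac12\hat V\hat V^{T}$. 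Since a diagonal block of a symmetric matrix has operator norm at most that of the whole matrix, and since $\Pi,\hat\Pi$ are orthogonal projections of equal rank $d$, this yields $\opnorm{\hat V\hat V^{T}-VV^{T}}\le 2\,\opnorm{\Pi-\hat\Pi}=2\,\opnorm{\sin\Theta(\operatorname{range}\Pi,\operatorname{range}\hat\Pi)}$.

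It remains to bound $\opnorm{\sin\Theta(\operatorname{range}\Pi,\operatorname{range}\hat\Pi)}$ by the Davis--Kahan theorem. The eigenvalue of $\tilde W$ immediately above $\sigma_j$ is $\sigma_{j-1}$ and the one immediately below $\sigma_l$ is $\sigma_{l+1}$ (using the convention $\sigma_{s+1}=0$; the remaining zero and negative eigenvalues only enlarge the lower gap), so the spectral gap of $\tilde W$ separating this eigen-block from the rest is at least $\delta$. If $\opnorm{\hat W-W}>\delta/2$, the asserted inequality is immediate because $\opnorm{\hat V\hat V^{T}-VV^{T}}\le 1$; otherwise Weyl's inequality moves every eigenvalue by at most $\opnorm{\hat W-W}$, so the separation hypothesis of the Davis--Kahan $\sin\Theta$ theorem holds with effective gap at least $\delta-\opnorm{\hat W-W}\ge\delta/2$, which gives $\opnorm{\sin\Theta(\operatorname{range}\Pi,\operatorname{range}\hat\Pi)}\le 2\,\opnorm{\hat W-W}/\delta$. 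Chaining the three factors $\sqrt2$ (first inequality), $2$ (diagonal block), and $2$ (Davis--Kahan) reproduces exactly the stated constant $4\sqrt2$.

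I expect no real obstacle here, as all the identities above are routine. The steps deserving attention are the bookkeeping that converts the singular-value gap $\delta$ into a genuine eigenvalue gap of the dilation — in particular checking that the zero and negative eigenvalues introduced by the dilation only help — and the Weyl-based case split, without which the claimed inequality would hold only because it is vacuous. A shorter alternative is to invoke Wedin's $\sin\Theta$ theorem for singular subspaces directly, which packages the dilation computation; the resulting constant is then no larger than $4\sqrt2$, so the statement still follows.
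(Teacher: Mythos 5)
The paper does not prove Lemma~\ref{lem:wedin}; it simply cites Theorem~21 of \cite{o2018random}. Your write-up is a correct, self-contained reconstruction of the standard argument underlying that reference, and the constant bookkeeping works out to exactly $4\sqrt{2}$. Specifically: (i) the Procrustes choice $O=AB^T$ combined with $(\hat V-VO)^T(\hat V-VO)=2B(I-\cos\Theta)B^T$, the identity $\|\hat V\hat V^T-VV^T\|=\sin\theta_{\max}$ for equal-rank projections, and $2\sin(\theta/2)\le\sqrt{2}\sin\theta$ on $[0,\pi/2]$ give the leftmost inequality; (ii) the Hermitian dilation puts the $\sigma_j,\dots,\sigma_l$ eigenblock projection $\Pi$ with lower-right $n\times n$ block $\tfrac12 VV^T$, and the diagonal-block operator-norm bound yields $\|\hat V\hat V^T-VV^T\|\le 2\|\Pi-\hat\Pi\|$; (iii) the zero and negative eigenvalues introduced by the dilation only widen the lower gap given the convention $\sigma_{s+1}=0$, and your Weyl-based case split (trivial when $\|\hat W-W\|>\delta/2$ since $\|\hat V\hat V^T-VV^T\|\le 1$, effective gap $\ge\delta/2$ otherwise) correctly produces the Davis--Kahan bound $\|\Pi-\hat\Pi\|\le 2\|\hat W-W\|/\delta$, without which the gap hypothesis of the $\sin\Theta$ theorem would not actually be verified. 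Chaining $\sqrt{2}\cdot 2\cdot 2$ recovers $4\sqrt{2}$. This is the same route a reader would find in the cited reference, so there is nothing to reconcile with the paper's (absent) proof.
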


\section{Proofs of Key Lemmas}\label{subsec:proof_important_lemma}
In this section, we provide proofs of the lemmas stated in Section \ref{sec:proof}, except for the proof of of Lemma \ref{lem:VV_pert}, which is deferred to Appendix \ref{apx:VV_pert}. Throughout this section, for any matrix $W$, we denote by $\text{span}(W)$ the space spanned by the columns of $W$.


%


\begin{proof}[Proof of Lemma \ref{lem:equivalence}]
Since  $\hat P_\cd{i} = \hat U \hat Y_\cd{i} = \br{\hat U\hat U^T}\hat U \hat Y_\cd{i}$ lies in the column space $\text{span}(\hat U)$ any $\cbr{\theta_j}_{j=1}^k$ that achieves the minimum of  (\ref{eqn:kmeans_P}) must also lie in $\text{span}(\hat U)$. In particular, we have that 
\begin{align*}
\min_{ z\in [k]^n , \cbr{ \theta_j}_{j=1}^{k} \in \mathr^{k}} \sum_{i\in[n]}\norm{\hat P_\cd{i} - \theta_{z_i}}^2 & = \min_{ z\in [k]^n , \cbr{ c_j}_{j=1}^{k} \in \mathr^{k}} \sum_{i\in[n]}\norm{\hat U \hat Y_\cd{i} - \hat U c_{z_i}}^2\\
& = \min_{ z\in [k]^n , \cbr{ c_j}_{j=1}^{k} \in \mathr^{k}} \sum_{i\in[n]}\norm{ \hat Y_\cd{i} - c_{z_i}}^2,
\end{align*}
where the last equation is due to the fact that $\hat U$ is an orthogonal matrix.  
\end{proof}

\begin{proof}[Proof of Lemma \ref{lem:theta_dist}]
Due to the fact that $\hat P$  is the best rank-$k$ approximation of $X$ in spectral norm and $P$ is also rank-$k$, we have that 
\begin{align*}
\norm{\hat P - X} \leq  \norm{P-X}=\|E\|.
\end{align*}
This, the fact that both $\hat P$ and $P$ are at most rank $k$ and the fact that we work on the event $\mathcal{F}$ imply that,
\begin{align}
\fnorm{\hat P - P} &\leq 2\sqrt{2k} \|P-X\|
 = 2\sqrt{2k} \notag 
\|E\| \\
& \leq 4\sqrt{k}(\sqrt{n}+\sqrt{p}), \label{eqn:2.1_E_op_norm}
\end{align}
where the last inequality is due to Lemma \ref{lem:E_opnorm}.
Now, denote by $\hat \Theta$ the center matrix after solving  (\ref{eqn:kmeans_P}). That is, the $i$th column of $\hat \Theta$ is $\hat \theta_{\hat z'_i}$. Since $\hat \Theta$ is the solution to the $k$-means objective, we have that
\begin{align*}
\fnorm{\hat \Theta - \hat P} \leq \fnorm{\hat  P -P}.
\end{align*}
Hence, by the triangle inequality, we obtain that 
\begin{align*}
\fnorm{\hat \Theta - P}\leq 2 \fnorm{\hat  P -P} \leq 8\sqrt{k}(\sqrt{n}+\sqrt{p}).
\end{align*}
Now, define the set $S$ as
\begin{align*}
S = \cbr{i\in[n]: \norm{\hat \theta_{\hat z'_i} - \theta^*_{z^*_i}} > \frac{\Delta }{2}}.
\end{align*}
Since $\cbr{\hat \theta_{\hat z'_i} - \theta^*_{z^*_i}}_{i\in[n]}$ are exactly the  columns of $\hat \Theta - P$, we have that
\begin{align*}
\abs{S} \leq \frac{\fnorm{\hat \Theta - P}^2}{\br{\Delta /2}^2} \leq \frac{256k\br{n+p}}{\Delta^2}. 
\end{align*}
Assuming that 
\begin{align*}
\frac{\beta \Delta^2}{ k^2\br{1+\frac{p}{n}}}\geq 512,
\end{align*}
we have that
\begin{align*}
\abs{S} \leq \frac{\beta n}{2k}.
\end{align*}
We now  show that all the data points in $S^C$ are correctly clustered. We define
\begin{align*}
C_j=\left\{i\in[n]:z^*_i=j,i\in S^C\right\},~ j\in[k].
\end{align*}
The following holds: 
\begin{itemize}
\item For each $j\in[k]$, $C_j$ cannot be empty, as $|C_j|\geq |\{i:z^*_i=j\}| - |S|>0$.
\item For each pair $j,l\in[k],j\neq l$, there cannot exist some $i\in C_j,i'\in C_l$ such that $\hat z'_i=\hat z'_{i'}$. Otherwise $\hat \theta_{\hat z'_i} = \hat \theta_{\hat z'_{i'}}$ which would imply
\begin{align*}
\norm{\theta^*_j - \theta^*_l}  & = \norm{\theta^*_{z^*_{i}} - \theta^*_{z^*_{i'}}} \\ & \leq \norm{\theta^*_{z^*_{i}} -\hat \theta_{\hat z'_i} } + \norm{\hat \theta_{\hat z'_i} -\hat \theta_{\hat z'_{i'}}} +  \norm{\hat \theta_{\hat z'_{i'}}- \theta^*_{z^*_{i'}}}  <  \Delta,
\end{align*}
contradicting  (\ref{eqn:delta}).
\end{itemize}
Since $\hat z'_i$ can only take values in $[k]$, we conclude that the sets $\{\hat z'_i:i\in C_j\}$ are disjoint for all  $j\in[k]$. That is, there exists a permutation $\phi\in\Phi$, such that
\begin{align*}
\hat z'_i = \phi(j),~ i\in C_j,~ j\in[k].
\end{align*}
This implies that $\sum_{i\in S^C}\mathbb{I}\{\hat z_i \neq \phi (z^*_i)\}=0$. Hence, we obtain that 
\begin{align*}
\ell(\hat z,z^*) \leq \abs{S} \leq  \frac{256k\br{n+p}}{\Delta^2}.
\end{align*}
When the ratio $\Delta^2/\br{k^2 \br{n+p}}$ is large enough, an immediate implication is that $\min_{j\in[k]} \abs{\cbr{i\in[n]:\hat z_i = j}} \geq \frac{\beta n}{k} - \abs{S} \geq \frac{\beta n}{2k}$.
Moreover, in this case we obtain that 
\begin{align*}
\max_{j} \norm{\hat \theta_j - \theta^*_{\phi(j)}}^2 \leq  \frac{\fnorm{\hat \Theta - P}^2}{\frac{\beta n}{k} -\abs{S}} \leq  \frac{128k^2\br{n+p}}{\beta n} 
\end{align*}
\end{proof}

\begin{proof}[Proof of Lemma \ref{lem:hat_V_normal}]
Recall that $M$ has SVD $M = U\Sigma V^T$ where $U = (u_1,\ldots, u_k)$, $V=(v_1, \dots, v_k)$ and $\Sigma =\text{diag}\{\sigma_1,\ldots, \sigma_k\}\in\mathr^{k\times k}$ with $\sigma_1 \geq \sigma_2 \geq \dots \geq \sigma_k \geq 0$. We denote
$\mathbb{S} = \cbr{x\in \tspan{I-VV^T}: \norm{x} = 1}$
to be the unit sphere in $\tspan{I-VV^T}$. We also denote $\mathcal{O}$ to be the set of all orthonormal matrices in $\mathr^{n\times n}$ and furthermore 
\begin{align*}
\mathcal{O}' = \cbr{O \in \mathcal{O} : OV=V}.
\end{align*}
Let $V_\perp$ be an orthogonal extension of $V$ such that  $(V,V_\perp)\in \mathcal{O}$.  Then for any $O\in \mathcal{O}' $, due to the  fact that $O(V,V_\perp)\in\mathcal{O}$ and  $O(V,V_\perp) = (V, O V_\perp)$, we have that $OV_\perp$ is another orthogonal extension of $V$. This implies that 
\begin{align}\label{eqn:ortho_3}
Ox \in\tspan{I-VV^T}~~\forall x\in\tspan{I-VV^T}.
\end{align}
Hence $\mathcal{O}'$ includes all rotation matrices in $\tspan{I-VV^T}$. In the following, we prove the three assertions of Lemma \ref{lem:hat_V_normal} one by one.

~\\
\emph{Assertion (1).} Recall that $\hat M=M+E = U\Sigma V^T +E$ and $\hat M = \sum_{j=1}^{p\wedge n}\hat\sigma_j \hat u_j \hat v_j^T$ and denote by $\stackrel{d}{=}$ equality in distribution. 
For any $O\in\mathcal{O}'$, since $EO^T \stackrel{d}{=} E$, we have that $\hat M O^T = \br{U\Sigma V^T +E}O^T = U\Sigma V^T + EO^T  \stackrel{d}{=} \hat M$. 
On the other hand, $\hat M O^T$ has SVD
\begin{align*}
\hat M O^T =  \sum_{j=1}^{p\wedge n}\hat\sigma_j \hat u_j \br{ O \hat v_j}^T.
\end{align*}
Hence, for any $j\in[k]$, we have that $\hat v_j \dist O \hat v_j$. 

For any $x\in\mathr^n$, we define the mapping $f:\mathr^n \rightarrow \mathbb{S}$ as $f(x) = (I-VV^T)x/ \|(I-VV^T)x\|$. Applying $f$ on both $\hat v_j$ and $O\hat v_j$, we obtain that 
\begin{align*}
 \frac{(I-VV^T)O\hat v_j}{\|(I-VV^T)O\hat v_j\|}\dist \frac{(I-VV^T)\hat v_j}{\|(I-VV^T)\hat v_j\|} .
\end{align*}
Since $\hat v_j=VV^T\hat v_j + (I - VV^T)\hat v_j$, we have $O\hat v_j = VV^T\hat v_j + O(I - VV^T)\hat v_j.$ By  (\ref{eqn:ortho_3}), we have that  $ O (I - VV^T)\hat v_j \in\tspan{I-VV^T}$. Hence, we obtain that 
\begin{align}\label{eqn:ortho_1}
& VV^TO\hat v_j  = VV^T \hat v_j  \\
& (I-VV^T)O\hat v_j = (I-VV^T)O(I - VV^T)\hat v_j = O(I - VV^T)\hat v_j.\label{eqn:ortho_2}
\end{align}
As a consequence of  (\ref{eqn:ortho_2}), we obtain that 
\begin{align}\label{eqn:ortho_4}
 O\frac{(I-VV^T)\hat v_j}{\|(I-VV^T)\hat v_j\|}= \frac{O(I-VV^T)\hat v_j}{\|O(I-VV^T)\hat v_j\|} \stackrel{d}{=}  \frac{(I-VV^T)\hat v_j}{\|(I-VV^T)\hat v_j\|}~~\forall\; O\in\mathcal{O}'.
\end{align}
In particular, $(I-VV^T)\hat v_j/\|(I-VV^T)\hat v_j\|$ is contained  in $ \mathbb{S}$ and is rotation-invariant. Hence, we obtain that $(I-VV^T)\hat v_j/\|(I-VV^T)\hat v_j\| $ is uniformly distributed on $\mathbb{S}$.

~\\
\emph{Assertion (2). }
For any $x\in\mathr^n$, we define another mapping $g:\mathr^n \rightarrow \mathr^n$ as $g(x) = ((VV^Tx)^T, ((I-VV^T)x)^T/ \|(I-VV^T)x\|)^T$. Recall that  $\hat v_j \overset{d}{=} O\hat v_j ~\forall O\in\mathcal{O}'$.  Applying $g$ on both $\hat v_j $ and $O\hat v_j$ and using  (\ref{eqn:ortho_1}), (\ref{eqn:ortho_2}) and (\ref{eqn:ortho_4}), we obtain that 
\begin{align} \label{proof lemma 3.4 III}
\begin{pmatrix} VV^T\hat v_j \\ \frac{(I-VV^T) \hat v_j}{ \|(I-VV^T) \hat v_j\|} \end{pmatrix}  \overset{d}{=}\begin{pmatrix} VV^T\hat v_j \\ O\frac{(I-VV^T) \hat v_j}{ \|(I-VV^T) \hat v_j\|} \end{pmatrix}.
\end{align}
Let $\mathcal{A}$ be a Borel subset of $\text{span}(VV^T)$ and $\mathcal{B}$ a Borel subset of $\mathbb{S}$. By \eqref{proof lemma 3.4 III} we have for any $O\in\mathcal{O}'$ that 
\begin{align*}
\mathbb{P} \left ( \frac{(I-VV^T) \hat v_j}{\|(I-VV^T) \hat v_j\|} \in \mathcal{B}  \Big | VV^T\hat v_j \in \mathcal{A} \right )=\mathbb{P}\left ( O\frac{(I-VV^T) \hat v_j}{\|(I-VV^T) \hat v_j\|} \in \mathcal{B}  \Big | VV^T\hat v_j \in \mathcal{A} \right ).
\end{align*}
Hence, we obtain that $\frac{(I-VV^T) \hat v_j}{\|(I-VV^T) \hat v_j\|} \big | VV^T\hat v_j$ is also uniformly distributed on $\mathbb{S}$,  invariant to the value of $VV^T\hat v_j$. This implies that  $\frac{(I-VV^T) \hat v_j}{\|(I-VV^T) \hat v_j\|}$ is independent of $VV^T\hat v_j$.

~\\
\emph{Assertion (3). }
Since $\|(I-VV^T)\hat v_j\|=\sqrt{1-\|VV^T\hat v_j\|^2}$ is  a function of only $VV^T\hat v_j$, this is an immediate consequence of the second assertion.

\end{proof}

\section{Extension of Proposition 2.1}\label{apx:prop_cons_extension}
In this appendix, we provide an extension of Proposition \ref{prop:cons}.

\begin{proposition}\label{prop:cons_extension}
Assume the observations $\cbr{X_i}_{i\in[n]}$ are generated as follows:
\begin{align*}
    X_i = \theta^*_{z^*_i} + \epsilon_i.
\end{align*}
Denote $E: = (\epsilon_1,\ldots,\epsilon_n)$. Assume that  $\Delta/(\beta^{-0.5}kn^{-0.5}\norm{E})\geq C$ for some large enough constant $C>0$. Then the output of Algorithm \ref{alg:main}, $\hat z$,  satisfies for another constant $C'>0$
\begin{align}
    \ell(\hat z,z^*) \leq \frac{C'k\norm{E}^2}{n\Delta^2}. \label{eqn:cons_extension}
\end{align}
In particular, if $\cbr{\epsilon_i}_{i=1}^n\iid \mathn(0,\Sigma)$ and assuming that  $\Delta/(\beta^{-0.5}k(\|{\Sigma}\|+(\text{trace}({\Sigma})+\|\Sigma\|\log(p+n))/n)^{0.5})\geq C$, we have for another constant $C''>0$ with probability at least $1-\ebr{-0.08n}$ that 
    \begin{align}
         \ell(\hat z,z^*) \leq \frac{C''k\br{\|{\Sigma}\|+(\text{trace}({\Sigma})+\|\Sigma\|\log(p+n))/n}^2}{\Delta^2}\label{eqn:cons_extension_2}. 
    \end{align}
Moreover, if  $\cbr{\epsilon_i}_{i=1}^n\iid \text{subG}(\sigma^2)$ (i.e., sub-Gaussian with variance proxy $\sigma^2$) and assuming  that $\Delta/(\beta^{-0.5}kn^{0.5}\sigma (1+p/n)^{-0.5})\geq C$, we have    with probability at least $1-\ebr{-0.08n}$ that 
    \begin{align}
        \ell(\hat z,z^*) \leq \frac{C'' k\sigma^2\br{1+\frac{p}{n}}}{\Delta^2}\label{eqn:cons_extension_3}.
    \end{align}
    
\end{proposition}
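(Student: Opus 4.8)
The plan is to establish \eqref{eqn:cons_extension} as a statement that is purely deterministic once $\norm{E}$ is fixed, and then to obtain \eqref{eqn:cons_extension_2} and \eqref{eqn:cons_extension_3} by substituting high-probability bounds on $\norm{E}$ tailored to the two noise models. The main point is that the finer machinery behind Theorem \ref{thm:main} (Lemmas \ref{lem:VV_pert} and \ref{lem:hat_V_normal}) is not needed here: only a consistency-level argument in the spirit of Lemma \ref{lem:theta_dist} is required.

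\emph{Step 1.} I would revisit the proofs of Lemma \ref{lem:equivalence} and Lemma \ref{lem:theta_dist} and observe that they use the Gaussianity of $E$ only through the operator-norm control $\norm{E}\leq\sqrt{2}(\sqrt{n}+\sqrt{p})$ supplied by Lemma \ref{lem:E_opnorm}. Lemma \ref{lem:equivalence} is purely algebraic and carries over verbatim, reducing the problem to bounding $\ell(\hat z',z^*)$ for Algorithm \ref{alg:rankk}. In the proof of Lemma \ref{lem:theta_dist}, since $\hat P$ is the best rank-$k$ approximation of $X=P+E$ in operator norm and $P$ has rank at most $k$, we get $\norm{\hat P-X}\leq\norm{P-X}=\norm{E}$, hence $\fnorm{\hat P-P}\leq\sqrt{2k}\,\norm{\hat P-P}\leq 2\sqrt{2k}\,\norm{E}$ and, by optimality of the $k$-means centers, $\fnorm{\hat\Theta-P}\leq 2\fnorm{\hat P-P}\leq 4\sqrt{2k}\,\norm{E}$. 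The counting step then gives $\abs{S}\leq\fnorm{\hat\Theta-P}^2/(\Delta/2)^2\leq 128\,k\norm{E}^2/\Delta^2$, which is at most $\beta n/(2k)$ as soon as $\Delta/(\beta^{-1/2}kn^{-1/2}\norm{E})\geq C$ for $C$ large enough; the remaining separation argument is unchanged and yields $\ell(\hat z,z^*)\leq\abs{S}/n\leq 128\,k\norm{E}^2/(n\Delta^2)$, i.e.\ \eqref{eqn:cons_extension}.

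\emph{Steps 2 and 3.} For the Gaussian case, write $E=\Sigma^{1/2}Z$ with $Z$ having i.i.d.\ $\mathn(0,1)$ entries. Chevet's (Gordon's) inequality gives $\E\norm{E}\leq\fnorm{\Sigma^{1/2}}+\sqrt{n}\,\norm{\Sigma^{1/2}}=\sqrt{\text{trace}(\Sigma)}+\sqrt{n\norm{\Sigma}}$, and since $Z\mapsto\norm{\Sigma^{1/2}Z}$ is $\norm{\Sigma}^{1/2}$-Lipschitz in the Euclidean norm on its entries, Borell's inequality (Theorem 2.2.7 in \cite{GineNickl16book}) yields $\norm{E}\leq\sqrt{\text{trace}(\Sigma)}+\sqrt{n\norm{\Sigma}}+\sqrt{2\norm{\Sigma}\,t}$ with probability at least $1-e^{-t}$. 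Taking $t$ of order $\max\{\log(p+n),n\}$, squaring, and dividing by $n$ gives $\norm{E}^2/n\lesssim\norm{\Sigma}+(\text{trace}(\Sigma)+\norm{\Sigma}\log(p+n))/n$ with probability at least $1-e^{-0.08n}$; substituting into \eqref{eqn:cons_extension} and checking that the separation hypothesis of \eqref{eqn:cons_extension_2} implies the one needed in Step 1 yields \eqref{eqn:cons_extension_2}. For the sub-Gaussian case, $E\in\mathr^{p\times n}$ has i.i.d.\ entries that are sub-Gaussian with variance proxy $\sigma^2$, so a standard bound on operator norms of random matrices with independent sub-Gaussian entries gives $\norm{E}\leq c\sigma(\sqrt{n}+\sqrt{p}+t)$ with probability at least $1-2e^{-t^2}$; choosing $t$ of order $\sqrt{n}$ yields $\norm{E}^2/n\lesssim\sigma^2(1+p/n)$ with probability at least $1-e^{-0.08n}$, and \eqref{eqn:cons_extension} then delivers \eqref{eqn:cons_extension_3}.

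\emph{Main obstacle.} Almost everything is already contained in the proof of Lemma \ref{lem:theta_dist}, so the only genuinely new ingredient is the sharp operator-norm bound in the Gaussian case: one must use Chevet's inequality rather than the crude estimate $\norm{E}\leq\norm{\Sigma}^{1/2}\norm{Z}\lesssim\norm{\Sigma}^{1/2}(\sqrt{n}+\sqrt{p})$ coming from Lemma \ref{lem:E_opnorm}, which would replace $\text{trace}(\Sigma)$ by the much larger $p\norm{\Sigma}$. A minor bookkeeping point is to reconcile the $\log(p+n)$ term in \eqref{eqn:cons_extension_2} with the stated probability $1-e^{-0.08n}$; this is harmless since that term is dominated by $\norm{\Sigma}$ whenever $\log(p+n)\lesssim n$.
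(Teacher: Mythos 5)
Your proposal is correct and follows the same overall strategy as the paper's proof: first isolate the purely deterministic conclusion \eqref{eqn:cons_extension} by observing that the arguments of Lemmas \ref{lem:equivalence} and \ref{lem:theta_dist} depend on the noise only through $\norm{E}$, and then specialize by inserting a high-probability bound on $\norm{E}$ for each noise model. Your tracing of the counting step (rank-$2k$ gives $\fnorm{\hat P - P}\leq 2\sqrt{2k}\norm{E}$, hence $\abs{S}\leq 128k\norm{E}^2/\Delta^2$) is exactly what the paper relies on.

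The one place you diverge from the paper is the Gaussian operator-norm bound. The paper appeals to Corollary 3.11 of Bandeira and van Handel after whitening $E\overset{d}{=}\Gamma\Lambda\tilde E$ so that $\Lambda\tilde E$ has independent but nonhomogeneous Gaussian entries; their result produces the $\sqrt{n\norm{\Sigma}}+\sqrt{\text{trace}(\Sigma)}+\sqrt{\norm{\Sigma}\log(p+n)}$ structure directly. You instead use Chevet's inequality for the mean, $\E\norm{\Sigma^{1/2}Z}\leq\sqrt{\text{trace}(\Sigma)}+\sqrt{n\norm{\Sigma}}$, and then Borell's Gaussian concentration with Lipschitz constant $\norm{\Sigma}^{1/2}$ in Frobenius norm. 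Both routes are valid and essentially equally sharp for the stated probability level $1-e^{-0.08n}$: as you correctly note, the $\norm{\Sigma}\log(p+n)/n$ contribution is dominated by $\norm{\Sigma}$ once $\log(p+n)\lesssim n$, which is forced anyway by requiring $e^{-0.08n}$ to be meaningful. Your approach arguably gives a cleaner constant on the leading $\sqrt{n\norm{\Sigma}}+\sqrt{\text{trace}(\Sigma)}$ term at this probability level, whereas Bandeira--van Handel would win if one wanted a finer polynomial-in-$(n+p)$ failure probability. One small misstatement: in the sub-Gaussian case the model assumes the \emph{columns} $\epsilon_i$ are i.i.d.\ sub-Gaussian vectors, not that the matrix has i.i.d.\ sub-Gaussian entries; the standard net argument (e.g.\ Theorem 5.39 of Vershynin applied to $E^T$, whose rows are then the independent sub-Gaussian $\epsilon_i$) still gives $\norm{E}\lesssim\sigma(\sqrt{n}+\sqrt{p})$ with probability $1-e^{-cn}$, so the conclusion is unaffected.
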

\begin{proof}
Following the proof of Proposition \ref{prop:cons} line by line (\ref{eqn:cons_extension}) immediately follows. 

To obtain (\ref{eqn:cons_extension_2}) and (\ref{eqn:cons_extension_3}), we provide upper bounds for $\norm{E}$. 

When the errors $\cbr{\epsilon_i}_{i=1}^n$ are independent Gaussians with covariance matrix $\Sigma$, we bound $\norm{E}$ by applying Corollary 3.11 in \cite{BandeiraVanHandel16}. More precisely, assume that $\Sigma$ has eigendecomposition $\Sigma=\Gamma \Lambda \Gamma^T$ with $\Lambda $ being a diagonal matrix and $\Gamma$ an orthogonal matrix. Denote by $\tilde E$ a $p\times n$ matrix with i.i.d. standard Gaussian entries. Then, by rotation invariance of isotropic Gaussian random variables, we have that  $E\overset{d}{=}\Gamma \Lambda \Gamma
^T \tilde E\overset{d}{=}\Gamma\Lambda \tilde E$. Hence,  $\|E\| \overset{d}{=} \| \Gamma \Lambda \tilde E\| \leq \|\Lambda \tilde E\|$. The entries of $\Lambda\tilde E$ are independent and hence we can now apply Corollary 3.11 in \cite{BandeiraVanHandel16} and (\ref{eqn:cons_extension_2}) follows. 

When the errors are sub-Gaussian distributed, we bound $\norm{E}$ by a net argument, see for instance Theorem 5.39 in \cite{Vershynin12}. 
\end{proof}

\section{Proof of Theorem 2.2} \label{apx:1_eps}
To prove Theorem \ref{thm:1_epsilon_approx}, we first note that \begin{align*} \sum_{i\in[n]}  \|\hat Y_\cd{i} - \tilde c_{\tilde z_i}\|^2 &  \leq \sum_{i\in[n]}  \|\hat Y_\cd{i} - \check c_{\check z_i}\|^2 \\ & \leq (1+\varepsilon) \inf_{ \{ c_j\}_{j=1}^k \in \mathbb{R}^k} \sum_{i\in[n]} \min_{j \in [k]} \|\hat Y_\cd{i} - c_{j}\|^2, \end{align*} as each iteration of Lloyd's algorithm is guaranteed to not increase the value of the objective function. By the same analysis as for  $\hat z$ in Proposition \ref{prop:cons} , $\tilde z$ and $\check z$  satisfy  (\ref{eqn:Lemma_4.2_1}) with an additional factor of $(1+\varepsilon)$ on the right hand side of the inequality and  the centres $\{\tilde \theta_j \}_{j=1}
^k=\{ \hat U\tilde c_j \}_{j=1}^k $ satisfy (\ref{eqn:theta_dist}) with an additional factor of $\sqrt{1+\varepsilon}$ on the right hand side of the inequality. 
Then the exponential bound \eqref{thm:1_eps_exp} follows similarly as the proof of Theorem \ref{thm:main} and we line out the necessary modifications below. 

In particular, the local optimality guarantee $\|\hat Y_i-\tilde c_{\tilde z_i} \| \leq \| \hat Y_i-\tilde c_j\|, \forall i\in[n], j \neq \tilde z_i$  ensures that the equality in  \eqref{eq:loc_opt} holds.  Moreover, by definition of the centres $\tilde \theta_j$ we have, similarly as in  \eqref{eq:thetaex}, that  \begin{align*} 
\tilde \theta_j = \frac{\sum_{\check z_i = j} \hat P_\cd{i}}{\sum_{\check z_i = j} 1}= \frac{\hat \sigma_l}{\sqrt{\abs{\cbr{i\in[n]:\check z_i =j}}}}
\end{align*} 
and that 
\begin{align*}
    |\langle \hat u_l, \tilde \theta_j \rangle | \leq  \frac{\hat \sigma_l}{\sqrt{\abs{\cbr{i\in[n]:\check z_i =j}}}}. 
\end{align*}
Finally, since $\check z$ fulfills \eqref{eqn:Lemma_4.2_1} with an additional factor of $(1+\varepsilon)$ on the right hand side, we further have that  $\abs{\cbr{i\in[n]:\check z_i = j}} \geq \frac{\beta n}{2k}$ and  thus we obtain, as in  \eqref{eqn:iprod_hat_u_hat_theta}, that  
\begin{align*}
\max_{j\in[k]}\max_{r+1\leq l\leq k} \abs{\iprod{\hat u_l}{\tilde \theta_j} } \mathbb{I}(\mathcal{F})\leq   \br{k\rho + 4} \sqrt{\frac{2k}{\beta}\br{1+\frac{p}{n}}}.
\end{align*}
With these modifications, the rest of the proof is the same as the proof of Theorem \ref{thm:main}. 
\section{Spectral Projection Matrix Perturbation Theory}\label{apx:VV_pert}

In this section, we give the proof of Lemma \ref{lem:VV_pert}. Before that, we first introduce two lemmas used in the proof of Lemma \ref{lem:VV_pert}.

The following lemma gives an upper bound on the operator norm of $\norm{S_{a:b}}$. The setting considered here is slightly more general than that in Lemma \ref{lem:VV_pert}, as $E$ is not necessarily a Gaussian noise matrix. The proof of  Lemma \ref{lem:VV_pert_1} mainly follows that of Lemma 2 in \citep{KoltchinskiiLouniciAAHP}. It is included in the later part of this section for completeness.

\begin{lemma}\label{lem:VV_pert_1}
Consider any  rank-$k$ matrix $M\in\mathr^{p\times n}$ with SVD $M=\sum_{j=1}^k \sigma_j u_j v_j^T$ where $\sigma_1 \geq \sigma_2 \ldots \geq \sigma_k >0$. Define $\sigma_0 = \sigma_{k+1} =0$.

Consider any matrix $E\in\mathr^{p\times n}$. Define $\hat  M = M + E$. Let the SVD of $\hat  M$ be $\sum_{j=1}^{p\wedge n} \hat  \sigma_j \hat u_j \hat v_j^T$ where $\hat \sigma_1 \geq \hat \sigma_2\geq \ldots \geq \hat \sigma_{p\wedge n}$. 

For any two indexes $a,b$ such that $1\leq a\leq b\leq k$, define $V_{a:b}=\br{v_a,\ldots,v_b}$,   $\hat V_{a:b}=\br{\hat v_a,\ldots,\hat v_b}$ and $V:=\br{v_1,\ldots, v_k}$. Define the singular value gap $g_{a:b}= \min\cbr{\sigma_{a-1} -\sigma_a , \sigma_b -\sigma_{b+1}}$. Define
\begin{align}\label{eqn:appendix_S_def}
S_{a:b} = \br{I-VV^T}\br{\hat V_{a:b} \hat V_{a:b}^T - V_{a:b}V_{a:b}^T} V_{a:b} - \sum_{a\leq j\leq b}\frac{1}{\sigma_j}\br{I-VV^T}E^Tu_jv_j^TV_{a:b}.
\end{align}
Then, we have that 
\begin{align*}
\norm{S_{a:b}} \leq  \left(\frac{32 (\sigma_a-\sigma_b)}{\pi g_{a:b}}+ 16 \right ) \frac{\|E\|^2}{g_{a:b}^2}.
\end{align*}
\end{lemma}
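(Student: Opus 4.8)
The plan is to follow the contour-integral approach of Koltchinskii and Lounici. First I would represent the spectral projection $\hat V_{a:b}\hat V_{a:b}^T$ (viewed as acting on the right singular space, i.e.\ on $\mathbb{R}^n$) as a Riesz projection
\begin{align*}
\hat V_{a:b}\hat V_{a:b}^T = \frac{1}{2\pi i}\oint_{\Gamma} \left(\zeta I - \hat M^T\hat M\right)^{-1}\diff\zeta,
\end{align*}
where $\Gamma$ is a circle in the complex plane enclosing exactly the squared singular values $\hat\sigma_a^2,\ldots,\hat\sigma_b^2$ and no others; the separation hypothesis $\E\|E\|\le g_{a:b}/8$ (here just $\|E\|\le g_{a:b}/8$, or the analogous quantitative bound used in the statement) together with Weyl's inequality (Lemma~\ref{lem:singular_diff}-type reasoning) guarantees such a contour exists and stays at distance of order $g_{a:b}$ from the spectrum of both $M^TM$ and $\hat M^TM$. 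The same formula with $M$ in place of $\hat M$ represents $V_{a:b}V_{a:b}^T$.

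Next I would expand the resolvent difference. Writing $\hat M^T\hat M = M^TM + R$ with $R = M^TE + E^TM + E^TE$, the second resolvent identity gives a Neumann-type series
\begin{align*}
\left(\zeta I - \hat M^T\hat M\right)^{-1} = \sum_{j\ge 0}\left[\left(\zeta I - M^TM\right)^{-1}R\right]^j \left(\zeta I - M^TM\right)^{-1}.
\end{align*}
Substituting, integrating termwise, and compressing by $(I-VV^T)$ on the left and $V_{a:b}$ on the right, the $j=0$ term vanishes (it is $V_{a:b}V_{a:b}^T$ compressed, which is killed by $I-VV^T$), the $j=1$ term produces, after computing the residue, precisely the linear correction $\sum_{a\le j\le b}\sigma_j^{-1}(I-VV^T)E^Tu_jv_j^TV_{a:b}$ that is subtracted off in the definition of $S_{a:b}$, so that $S_{a:b}$ equals the sum of the $j\ge 2$ terms. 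Each such term I would bound in operator norm by $\|(\zeta I-M^TM)^{-1}\|^{j+1}\|R\|^j$ times the length of $\Gamma$; on the contour $\|(\zeta I - M^TM)^{-1}\|\lesssim 1/g_{a:b}$ (with the constant depending on how much of $\Gamma$ hugs the eigenvalues, which is where the factor $(\sigma_a-\sigma_b)/g_{a:b}$ and the $1/\pi$ from the arc length enter), and on the relevant part $\|R\|\lesssim \|E\|^2/\text{(scale)}$ after also using that the linear-in-$E$ pieces of $R$ are partly cancelled. Summing the geometric series in $j$ (convergent because $\|E\|\le g_{a:b}/8$ makes the ratio small) yields the stated bound $\|S_{a:b}\|\le\left(\frac{32(\sigma_a-\sigma_b)}{\pi g_{a:b}}+16\right)\frac{\|E\|^2}{g_{a:b}^2}$.

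The main obstacle is the bookkeeping in the termwise residue computation: one must verify carefully that the $j=1$ contribution, after taking the contour integral and the $(I-VV^T)(\cdot)V_{a:b}$ compression, reduces exactly to $\sum_{a\le j\le b}\sigma_j^{-1}(I-VV^T)E^Tu_jv_j^TV_{a:b}$ and not to some nearby expression, and that all the ``quadratic and higher'' remainder really collects into the $j\ge 2$ tail with no leftover linear piece; this is exactly the point where keeping track of which factors of $R$ are $M^TE+E^TM$ versus $E^TE$ matters, and where the numerical constants $32$ and $16$ are pinned down. The convergence of the Neumann series and the resolvent bound on $\Gamma$ are routine once the contour is fixed, so the delicacy is entirely in isolating the linear term and in the precise geometry of $\Gamma$ that governs the constant multiplying $(\sigma_a-\sigma_b)/g_{a:b}$.
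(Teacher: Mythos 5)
Your plan diverges from the paper's in a way that is not merely cosmetic: you work with the Gram matrix $\hat M^T\hat M$ and the perturbation $R=M^TE+E^TM+E^TE$, whereas the paper uses the Hermitian dilation $D(\hat M)=\bigl(\begin{smallmatrix}0&\hat M\\ \hat M^T&0\end{smallmatrix}\bigr)=D(M)+D(E)$, and this difference matters. With the dilation, the perturbation $D(E)$ has $\|D(E)\|=\|E\|$ and is \emph{exactly linear} in $E$, so the Neumann series has cleanly separated orders; and $D(M)$ has spectrum $\{\pm\sigma_j\}\cup\{0\}$, so the contour can be placed at distance $g_{a:b}/2$ from $[\sigma_a,\sigma_b]$, giving $\|(D(M)-\eta I)^{-1}\|\le 2/g_{a:b}$ \emph{without} any condition-number loss. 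In your route, two things go wrong simultaneously. First, the $j=1$ term of your Neumann series does not reduce exactly to the stated linear correction: a direct residue computation gives $(I-VV^T)Rv_j=\sigma_j(I-VV^T)E^Tu_j+(I-VV^T)E^TEv_j$ after $(I-VV^T)M^TEv_j$ drops out, so dividing by $\sigma_j^2$ leaves an extra quadratic piece $\sigma_j^{-2}(I-VV^T)E^TEv_j$ that is not part of the subtracted term and must be folded back into $S_{a:b}$ — your claim that the $j=1$ term produces ``precisely the linear correction'' is false as stated. Second, and more seriously, the eigenvalue gaps of $M^TM$ live at scale $\sigma_{a-1}^2-\sigma_a^2\asymp g_{a:b}\sigma_a$ and $\sigma_b^2-\sigma_{b+1}^2\asymp g_{a:b}\sigma_b$, while $\|R\|$ can be as large as $2\sigma_1\|E\|+\|E\|^2$. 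The ratio that controls convergence of the Neumann series and the size of the $j\geq 2$ tail is therefore of order $\sigma_1\|E\|/(g_{a:b}\sigma_b)$, not $\|E\|/g_{a:b}$; this introduces a factor of the condition number $\sigma_1/\sigma_b$ that appears nowhere in the claimed bound $\bigl(\tfrac{32(\sigma_a-\sigma_b)}{\pi g_{a:b}}+16\bigr)\|E\|^2/g_{a:b}^2$ and can be arbitrarily large. Your own caveats about ``keeping track of which factors of $R$'' gesture at the first problem but do not address the second, and I do not see how the Gram-matrix contour can recover the gap-scale $g_{a:b}$ rather than the squared-scale gaps. You should switch to the self-adjoint dilation, write $D(\hat M)=D(M)+D(E)$, expand the resolvent in powers of $D(E)$ (linear in $E$), and use two contours around $[\sigma_a,\sigma_b]$ and $[-\sigma_b,-\sigma_a]$ at distance $g_{a:b}/2$; then the length $|\gamma^{\pm}|\le 2(\sigma_a-\sigma_b)+\pi g_{a:b}$ and the resolvent bound $2/g_{a:b}$ produce the stated constants after summing the geometric series for $j\ge2$ under $\|E\|\le g_{a:b}/4$, with a separate crude bound handling $\|E\|>g_{a:b}/4$.
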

$S_{a:b}$ in Lemma \ref{lem:VV_pert} and Lemma \ref{lem:VV_pert_1} depends on $E$. It can be written as $S_{a:b}(E)$ with $S_{a:b}\br{\cdot}$ treated as a function of the noise matrix. Lemma \ref{lem:VV_pert_2} studies the Lipschitz continuity of $S_{a:b}(\cdot)$. It slightly generalizes Lemma 2.4 in \citep{KoltchinskiiXia15} and its proof follows along the same arguments. Its proof will be given in the later part of this section for completeness.

\begin{lemma}\label{lem:VV_pert_2}
Consider the same setting as in Lemma \ref{lem:VV_pert_1}. Define $S_{a:b}(E)$ as in  (\ref{eqn:appendix_S_def}).
Consider another matrix $E'\in\mathr^{p\times n}$ and define  $\hat M':=M + E'$. Define  $S_{a:b}(E')$ analogously. Assuming that $\max\cbr{\norm{E},\norm{E'}}\leq g_{a:b}/4$, we have that 
	\begin{align}
\|S_{a:b}(E)-S_{a:b}(E')\| \leq 1024\left (1+\frac{\sigma_a-\sigma_b}{g_{a:b}} \right )\frac{\max \cbr{\|E\|, \|E'\| }}{g_{a:b}^2} \|E-E'\|. 
	\end{align}
\end{lemma}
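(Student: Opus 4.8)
The plan is to reuse the contour-integral machinery set up in the proof of Lemma~\ref{lem:VV_pert_1} and then establish Lipschitz continuity of the higher-order remainder by a telescoping argument. First I would pass to the Hermitian dilation $\mathcal{M}\in\mathr^{(p+n)\times(p+n)}$ of $M$ (zero diagonal blocks, off-diagonal blocks $M$ and $M^{T}$), whose spectrum is $\pm\sigma_{1},\dots,\pm\sigma_{k}$ together with zeros, and set $\hat{\mathcal M}=\mathcal M+\mathcal E$, $\hat{\mathcal M}'=\mathcal M+\mathcal E'$, where $\mathcal E$ is the dilation of $E$ (so $\|\mathcal E\|=\|E\|$) and analogously $\mathcal E'$; crucially $\hat{\mathcal M},\hat{\mathcal M}'$ depend \emph{linearly} on $E,E'$. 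The bottom-right $n\times n$ block of the spectral projection of $\mathcal M$ (resp.\ $\hat{\mathcal M}$) onto the eigenvalues $\sigma_{a},\dots,\sigma_{b}$ (resp.\ $\hat\sigma_{a},\dots,\hat\sigma_{b}$) equals $\tfrac12 V_{a:b}V_{a:b}^{T}$ (resp.\ $\tfrac12\hat V_{a:b}\hat V_{a:b}^{T}$). I would take $\gamma$ to be the boundary of the rectangle $[\sigma_{b}-g_{a:b}/2,\ \sigma_{a}+g_{a:b}/2]\times[-g_{a:b}/2,\ g_{a:b}/2]$; using $\max\{\|E\|,\|E'\|\}\le g_{a:b}/4$, Weyl's inequality and $\sigma_{b}\ge g_{a:b}$, this single contour isolates exactly $\sigma_{a},\dots,\sigma_{b}$ in $\mathrm{spec}(\mathcal M)$ and $\hat\sigma_{a},\dots,\hat\sigma_{b}$ in $\mathrm{spec}(\hat{\mathcal M})$ and $\mathrm{spec}(\hat{\mathcal M}')$, it satisfies $\mathrm{dist}(\gamma,\mathrm{spec}(\mathcal M))\ge g_{a:b}/2$, hence $\|R(z)\|\le 2/g_{a:b}$ on $\gamma$ for $R(z):=(\mathcal M-zI)^{-1}$, and it has length at most $2(\sigma_{a}-\sigma_{b})+4g_{a:b}$.

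Next I would expand $\hat R(z):=(\hat{\mathcal M}-zI)^{-1}=\sum_{l\ge0}(-1)^{l}R(z)(\mathcal E R(z))^{l}$, which converges on $\gamma$ because $\|\mathcal E R(z)\|\le(g_{a:b}/4)(2/g_{a:b})=1/2$, and similarly for $\hat{\mathcal M}'$. By Cauchy's formula, $\tfrac12(\hat V_{a:b}\hat V_{a:b}^{T}-V_{a:b}V_{a:b}^{T})$ is the bottom-right block of $\mathcal L(\mathcal E)+\mathcal R(\mathcal E)$, where $\mathcal L(\mathcal E)=\tfrac1{2\pi i}\oint_{\gamma}R(z)\mathcal E R(z)\,dz$ is the first-order term and $\mathcal R(\mathcal E)=-\tfrac1{2\pi i}\oint_{\gamma}\sum_{l\ge2}(-1)^{l}R(z)(\mathcal E R(z))^{l}\,dz$ is the remainder. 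By the computation already carried out in the proof of Lemma~\ref{lem:VV_pert_1}, compressing the first-order term by the norm-$\le1$ maps ``extract the bottom-right block'' and $(I-VV^{T})(\cdot)V_{a:b}$ produces exactly $\sum_{a\le j\le b}\tfrac1{\sigma_{j}}(I-VV^{T})E^{T}u_{j}v_{j}^{T}V_{a:b}$, so that $S_{a:b}(E)$ is nothing but $(I-VV^{T})$ times twice the bottom-right block of $\mathcal R(\mathcal E)$ times $V_{a:b}$ (this is also why $\|S_{a:b}\|$ in Lemma~\ref{lem:VV_pert_1} scales with $\|E\|^{2}$). Consequently $\|S_{a:b}(E)-S_{a:b}(E')\|\le 2\,\|\mathcal R(\mathcal E)-\mathcal R(\mathcal E')\|$, and it remains only to bound the Lipschitz constant of $\mathcal R$.

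For that last step I would invoke the telescoping identity $R(\mathcal E R)^{l}-R(\mathcal E'R)^{l}=\sum_{m=0}^{l-1}R(\mathcal E R)^{m}(\mathcal E-\mathcal E')R(\mathcal E'R)^{l-1-m}$ (suppressing the argument $z$). Each summand has operator norm at most $\|R(z)\|^{l+1}\rho^{l-1}\|E-E'\|$ with $\rho:=\max\{\|E\|,\|E'\|\}$; summing over the $l$ values of $m$ and then over $l\ge2$, and using $\|R(z)\|\le2/g_{a:b}$ and $\rho\|R(z)\|\le1/2$ on $\gamma$, one gets $\sum_{l\ge2}l\,\|R(z)\|^{l+1}\rho^{l-1}\le 6\rho\,(2/g_{a:b})^{3}$. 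Multiplying by $\mathrm{length}(\gamma)/(2\pi)\le(2(\sigma_{a}-\sigma_{b})+4g_{a:b})/(2\pi)$ and using $\sigma_{a}-\sigma_{b}+2g_{a:b}\le 2g_{a:b}(1+(\sigma_{a}-\sigma_{b})/g_{a:b})$ gives $\|\mathcal R(\mathcal E)-\mathcal R(\mathcal E')\|\le \tfrac{96}{\pi}\bigl(1+\tfrac{\sigma_{a}-\sigma_{b}}{g_{a:b}}\bigr)\tfrac{\rho}{g_{a:b}^{2}}\|E-E'\|$, so doubling yields the claimed estimate with an absolute constant comfortably below the stated $1024$.

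The substantive work is bookkeeping rather than a new idea, and the one point that needs genuine care is the geometry of the contour: one must check that a \emph{single} $\gamma$ simultaneously isolates the eigenvalue cluster $\{\sigma_{a},\dots,\sigma_{b}\}$ for $\mathcal M$ and the corresponding clusters for both $\hat{\mathcal M}$ and $\hat{\mathcal M}'$ (this is exactly where the hypothesis $\max\{\|E\|,\|E'\|\}\le g_{a:b}/4$ together with Weyl's inequality enters), that the resolvent bound $\|R(z)\|\le 2/g_{a:b}$ is uniform over $\gamma$, and that $\mathrm{length}(\gamma)\lesssim(\sigma_{a}-\sigma_{b})+g_{a:b}$ so that the factor $1+(\sigma_{a}-\sigma_{b})/g_{a:b}$ --- and not a larger power --- appears. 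All three of these, and the identification of the compressed first-order term with the explicit expression subtracted in the definition of $S_{a:b}$, are already present in the proof of Lemma~\ref{lem:VV_pert_1}, so here they can simply be quoted, and only the telescoping estimate above is genuinely new.
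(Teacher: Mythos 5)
Your proof is correct, and it takes a genuinely different route from the paper. The paper's argument writes $S_{a:b}(E')-S_{a:b}(E)$ by first passing through the spectral projector of the \emph{perturbed} dilation: it expands $\hat P'-\hat P$ in a Neumann series for the resolvent $R_{\hat M}$ centered at $\hat M$, producing operators $\hat L(E'-E)$ and $\hat S(E'-E)$, and then controls separately $\|\hat S(E'-E)\|$ (by the same argument as in Lemma \ref{lem:VV_pert_1}) and $\|\hat L(E'-E)-(L(E')-L(E))\|$ (via the resolvent difference $R_{\hat M}-R_M$). Your argument instead stays with the single resolvent $R_M$ throughout and directly telescopes the Neumann-series remainder term by term via $R(\mathcal E R)^l - R(\mathcal E'R)^l = \sum_{m=0}^{l-1}R(\mathcal E R)^m(\mathcal E-\mathcal E')R(\mathcal E'R)^{l-1-m}$, then sums the geometric tail. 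This avoids introducing $\hat P'-\hat P$, avoids the re-expansion around a shifted center, and yields the Lipschitz estimate in a single pass; it also produces a substantially smaller constant (of order $10^2$ rather than $1024$). Your use of a single contour around the positive cluster $\{\sigma_a,\ldots,\sigma_b\}$, followed by a factor of $2$, is a further simplification over the paper's two contours $\gamma^+$ and $\gamma^-$; the halving you invoke is legitimate because the bottom-right $n\times n$ block of each $P_i$ equals $\tfrac12 v_iv_i^T$ and, after compression by $(I-VV^T)(\cdot)V_{a:b}$, the residual contribution from $\tfrac12(P_i+P_{-i})$ in the linear term vanishes, so the single-contour objects compress to exactly $\tfrac12 L_{a:b}$ and $\tfrac12 S_{a:b}$. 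The one step you correctly flag as requiring care --- that the single rectangle isolates the relevant eigenvalue cluster for $\mathcal M$, $\hat{\mathcal M}$ and $\hat{\mathcal M}'$ simultaneously --- does indeed follow from Weyl's inequality and the hypothesis $\max\{\|E\|,\|E'\|\}\le g_{a:b}/4$, as in Lemma \ref{lem:VV_pert_1}.
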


Applying Lemma \ref{lem:VV_pert_1} and Lemma \ref{lem:VV_pert_2}, we are able to prove Lemma \ref{lem:VV_pert}. It generalizes Theorem 1.1 in \citep{KoltchinskiiXia15}, and its proof follows the same argument.

\begin{proof}[Proof of Lemma \ref{lem:VV_pert}]
Define $\phi$  as follows
 \begin{equation*}\phi(s)= \begin{cases} 1, ~~~~~~~~~~~~~s \leq 1 \\ 3-2s,~~~~~~ 1< s < 3/2 \\
0, ~~~~~ ~~~~~~~s \geq 3/2
\end{cases}
\end{equation*}
and note that $\phi$ is Lipschitz with Lipschitz constant $2$. As we mention earlier in this section, we write $S_{a:b}(E)$ and treat $S_{a:b}(\cdot)$ as a matrix valued  function.

\paragraph{Step 1}
Define a function
\begin{align*}
h_\delta(E)=\langle S_{a:b}(E), W \rangle \phi \left  (\frac{6\|E\|}{\delta} \right ). 
\end{align*}
We are going to show that $h_\delta$ is also Lipschitz for any $\delta \leq g_{a:b}/4$. We use the notation $\norm{\cdot}_*$ for the nuclear norm of a  matrix. 
\begin{itemize}
\item First suppose that $\max \cbr{\|E\|, \|E'\|}  \leq  \delta$. 
Then, by Lemma \ref{lem:VV_pert_1}, Lemma \ref{lem:VV_pert_2} and the fact that  $\phi$ is Lipschitz, we obtain that 
\begin{align*}
& |h_\delta(E)-h_\delta(E')| \\
& \leq \left | \langle S_{a:b}(E) - S_{a:b}(E'), W \rangle \right |  \phi \left  (\frac{6\|E\|}{\delta} \right ) + \left | \iprod{S_{a:b}(E')}{W} \right | \left | {\phi \left  (\frac{6\|E\|}{\delta} \right ) - \phi \left  (\frac{6\|E'\|}{\delta} \right )} \right | \\
 &\leq \|S_{a:b}(E)-S_{a:b}(E')\| \|W\|_* \phi\left  (\frac{6\|E\|}{\delta} \right ) + \|S_{a:b}(E')\| \|W\|_* \left | \phi \left  (\frac{6\|E\|}{\delta} \right ) -\phi \left  (\frac{6\|E'\|}{\delta} \right )\right |\\ 
& \leq 1024  \left (1+\frac{\sigma_a-\sigma_b}{g_{a:b}} \right )\frac{\max \cbr{\|E\|, \|E'\| }}{g_{a:b}^2} \|E-E'\| \|W\|_* \\
&\quad  + 16\left  (1+\frac{\sigma_a-\sigma_b}{g_{a:b}} \right )\frac{\|E'\|^2 }{g_{a:b}^2}  \|W\|_*\frac{12 \abs{\|E\| -\|E'\|}}{\delta} \\ 
& \leq    C_1\left (1+\frac{\sigma_a-\sigma_b}{g_{a:b}} \right )\frac{\delta}{g_{a:b}^2} \|E-E'\| \|W\|_*,
\end{align*}
for some constant $C_1 >0$ that is independent of $E,E'$.
\item If $\min \cbr{\|E\|, \|E'\|} \geq  \delta$ then $h(E)=h(E')=0$ and the above inequality trivially holds.
\item Finally, if $\|E \| < \delta \leq \|E'\|$, by a similar argument as above, we obtain that 
\begin{align*}
|h_\delta(E)-h_\delta(E')| & = |h_\delta(E)| = \abs{ \langle S_{a:b}(E), W \rangle \left ( \phi \left  (\frac{6\|E\|}{\delta} \right ) -\phi \left  (\frac{6\|E'\|}{\delta} \right )\right ) } \\ 
& \leq  \|S_{a:b}(E)\| \|W\|_* \abs{\phi \left  (\frac{6\|E\|}{\delta} \right ) -\phi \left  (\frac{6\|E'\|}{\delta} \right )}\\
& \leq C_1\left (1+\frac{\sigma_a-\sigma_b}{g_{a:b}} \right )\frac{\delta}{g_{a:b}^2} \|E-E'\| \|W\|_*,
\end{align*}
and the same bound holds if we switch the places of $E$ and $E'$ in the last case. 
\end{itemize}
Combining the above cases together, we have shown that for any $\delta$ such that $\delta \leq g_{a:b}/4$, $h_\delta$ is a Lipschitz function with Lipschitz constant  bounded by
\begin{align*}
C_1\left (1+\frac{\sigma_a-\sigma_b}{g_{a:b}} \right )\frac{ \delta }{g_{a:b}^2}  \|W\|_*. 
\end{align*} 

\paragraph{Step 2} In the following, for any two sequences $\cbr{x_n},\cbr{y_n}$, we adopt the notation $x_n \lesssim y_n$ meaning there exists some constant $c>0$ independent of $n$, such that $x_n \leq cy_n$.
	
By lemma \ref{lem:E_opnorm}, we have that for all $t>0$, 
	\begin{align*}
	\pbr{\abs{\norm{E} - \E \norm{E}} \geq \sqrt{2 t} } \leq \ebr{-t}.
	\end{align*}
	Set $\delta=\delta(t)=\mathbb{E}\|E\|+\sqrt{2 t}$. We consider the following two scenarios depending on the values of $t$. 
	\begin{itemize}
	\item We first consider  the case where $\sqrt{2 t}\leq g_{a:b}/24$, which implies $\delta(t) \leq g_{a:b}/6$. By the definition of $h_\delta (\cdot)$, we have that $h_\delta(E) = \iprod{S_{a:b}(E)}{W}$. Denoting by $m$ the median of $\langle S_{a:b}(E), W \rangle$  we have that
\begin{align*}
\mathbb{P} \left ( h_\delta(E)\geq m\right ) &  \geq \mathbb{P} \left ( h_\delta(E)\geq m, \|E\| \leq \delta(t) \right ) =  \mathbb{P} \left (  \iprod{S_{a:b}(E)}{W}\geq m, \|E\| \leq \delta(t) \right ) \\ 
& \geq \mathbb{P} ( \langle S_{a:b}, W \rangle \geq m)-\mathbb{P}(\|E\| > \delta(t)) \geq \frac{1}{2}-\frac{1}{2}e^{-t} \geq \frac{1}{4},
\end{align*}
and likewise $\mathbb{P}\left ( h_\delta(E) \leq m  \right ) \geq 1/4$. Hence, since $h_\delta$ is Lipschitz, we can apply Lemma 2.6 in \cite{KoltchinskiiXia15}, which is a corollary to the the Gaussian isoperimetric inequality, to show that with probability at least $1-e^{-t}$ that 
\begin{align}
|h_\delta(E)-m| & \lesssim  \sqrt{t} \left (1+\frac{\sigma_a-\sigma_b}{g_{a:b}} \right )\frac{\delta(t)}{g_{a:b}^2}  \|W\|_* .\label{eqn:isoperimetric}
\end{align}
By Lemma \ref{lem:E_opnorm}, we have that $\E \norm{E}\lesssim \sqrt{n+p}$. Thus, we obtain that 
\begin{align*}
|h_\delta(E)-m|  \lesssim \left (1+\frac{\sigma_a-\sigma_b}{g_{a:b}} \right ) \frac{\sqrt{t}}{g_{a:b}} \left ( \frac{\sqrt{n+p}+\sqrt{t}}{g_{a:b}}\right ) \|W\|_*.
\end{align*}
Moreover, the event where $\|E\| \leq \delta(t)$ occurs with probability at least $1-e^{-t}$ and on this event $h_\delta$ coincides with $\langle S_{a:b}(E), W \rangle$. Hence, with probability at least $1-2{e}^{-t}$
\begin{align}
|\langle S_{a:b}(E), W \rangle - m | \lesssim \left (1+\frac{\sigma_a-\sigma_b}{g_{a:b}} \right ) \frac{\sqrt{t}}{g_{a:b}} \left ( \frac{\sqrt{n+p}+\sqrt{t}}{g_{a:b}}\right ) \|W\|_*. \label{Isoperimetric}
\end{align}

\item We  need to prove a similar inequality in  the case  $\sqrt{2t}> g_{a:b}/24$. In this case we have that $\E\|E\| \lesssim \sqrt{t}$ as by assumption $\E\|E\| \leq g_{a:b}/8$. Hence, applying lemma \ref{lem:VV_pert_1}, we have that
\begin{align}\label{eqn:S_W_large_t_1}
|\langle S_{a:b}(E), W\rangle| \leq \norm{S_{a:b}(E)}\norm{W}_* \lesssim \left (1+\frac{\sigma_a-\sigma_b}{g_{a:b}} \right ) \frac{t}{g_{a:b}^2} \|W\|_*.
\end{align}
Hence, since $t \geq \log(4)$ and $e^{-t}\leq 1/4$, we conclude that we can bound
\begin{align}\label{eqn:S_W_large_t_2}
|m| \lesssim \left (1+\frac{\sigma_a-\sigma_b}{g_{a:b}} \right ) \frac{t}{g_{a:b}^2} \|W\|_*.
\end{align}
 (\ref{eqn:S_W_large_t_1}) and  (\ref{eqn:S_W_large_t_2}) together immediately imply that the inequality in \eqref{Isoperimetric} also holds for $\sqrt{2t}> g_{a:b}/24$. 
\end{itemize}

So far we have proved that  (\ref{Isoperimetric}) holds for all $t > \log 4$.
Integrating out the tails in the inequality in \eqref{Isoperimetric} we obtain that
\begin{align*}
| \mathbb{E} \langle S_{a:b}(E), W \rangle - m| \leq \mathbb{E} | \langle S_{a:b}(E), W \rangle - m| \lesssim \left (1+\frac{\sigma_a-\sigma_b}{g_{a:b}} \right ) \frac{\sqrt{n+p}}{g_{a:b}^2} \|W\|_*,
\end{align*}
and hence we can substitute the median by the mean in the concentration inequality \eqref{Isoperimetric}. 

\end{proof}

The last two things left are the proofs of Lemma \ref{lem:VV_pert_1} and Lemma \ref{lem:VV_pert_2}.
\begin{proof}[Proof of Lemma \ref{lem:VV_pert_1}]
	As in the proof of Lemma \ref{lem:hat_V_normal}, we use self-adjoint dilation. As before, we define for any matrix $W$
\begin{align*}
D\br{W} =  \begin{pmatrix}
	0 & W \\ W^T & 0
	\end{pmatrix}.
\end{align*}
Since $D(M)$ is symmetric and because of its relation to $M$ it has eigendecomposition 
	\begin{align}\label{eqn:D_M_def}
	D(M)=\sum_{1 \leq |i| \leq k} \sigma_i P_i, 
	\end{align} 
	where for $i\in[k]$,
	 \begin{align}\label{eqn:Pi_def}
	  \sigma_{-i}=-\sigma_i, ~P_i=\frac{1}{2}
	\begin{pmatrix}
	u_iu_i^T & u_i v_i^T \\ v_iu_i^T & v_iv_i^T
	\end{pmatrix}, ~P_{-i}=\frac{1}{2}
	\begin{pmatrix}
	u_iu_i^T & -u_i v_i^T \\ -v_iu_i^T & v_iv_i^T
	\end{pmatrix}
	\end{align}
	Similarly, we have that 
		\begin{equation*}
	D(\hat M)=\sum_{1 \leq |i| \leq p\wedge n} \hat \sigma_i \hat P_i.,
	\end{equation*}
	where for each $i\in[k]$, $\hat \sigma_{-i}, \hat P_i$ and $\hat P_{-i}$ are defined analogously. Denote
	\begin{align}\label{eqn:P_hat_P_def}
	& P= \sum_{|i| \in \{a, \dots, b \}} P_i, \quad\text{ and }\hat P=\sum_{|i| \in \{a, \dots, b \}} \hat P_i. 
	\end{align} 
Using this notation, we have that 
\begin{align}\label{eqn:hatVV_VV_simplify}
 \br{I-VV^T}\br{\hat V_{a:b} \hat V_{a:b}^T - V_{a:b}V_{a:b}^T} V_{a:b} & =   \begin{pmatrix}
 O_{n\times p } & \br{I-VV^T}
 \end{pmatrix} \br{\hat P - P} \begin{pmatrix}
O_{n\times p } \\ V_{a:b}
 \end{pmatrix},
\end{align}	
	 where $ O_{n\times p } $  denotes a $n \times p$-matrix consisting of $0$'s. 
We divide the following part of the proof into three steps. 
	\paragraph{Step 1} In this step, we decompose $ \br{I-VV^T}\br{\hat V_{a:b} \hat V_{a:b}^T - V_{a:b}V_{a:b}^T} V_{a:b}$. 
Denote by $ [\sigma_a, \sigma_b]$ the corresponding  interval on the real axis of the complex plane $\mathbb{C}$. Define    $\gamma^+$  to be a contour $\mathbb{C}$  around the intervals $ [\sigma_a, \sigma_b]$ with distance equal to $g_{a:b}/2$, i.e., 
	\begin{align}\label{eqn:gamma_plus}
	 \gamma^+=\cbr{ \eta \in \mathbb{C}: \text{dist}(\eta, [\sigma_a, \sigma_b])=\frac{g_{a:b}}{2} },
	\end{align}
	where for any point $\eta \in \mathbb{C}$ and interval $B\in\mathbb{C}$, $\text{dist}\br{\eta, B} = \min_{\eta'\in\mathbb{B}} \norm{\eta - \eta'}$. Likewise we define $\gamma^-$ as
	\begin{align}\label{eqn:gamma_minus}
	 \gamma^-=\cbr{ \eta \in \mathbb{C}: \text{dist}(\eta, [\sigma_{-b}, \sigma_{-a}])=\frac{g_{a:b}}{2} }.
	\end{align}
This way, among the singular values of $D(M)$, only those with index in $\cbr{a,\ldots, b}$ and $\cbr{-b,\ldots, -a}$ are included in $\gamma^+$ and $\gamma^-$ respectively, and the remaining ones lie outside of the contours.
By the Riesz representation Theorem for spectral projectors (c.f. page 39 of \cite{kato2013perturbation}), we have  that 
	\begin{align} \label{riesz}
	\hat P = - \frac{1}{2 \pi i} \oint_{\gamma^+}(D(\hat M)-\eta I)^{-1} d\eta - \frac{1}{2 \pi i} \oint_{\gamma^-}(D(\hat M)-\eta I)^{-1} d\eta .
	\end{align}
	For any matrix $W$ and any $\eta\in\mathbb{C}$, define the resolvent operator 
	\begin{align*}
	R_W(\eta) =  (D(W)-\eta I)^{-1}.
	\end{align*}
Then  (\ref{riesz}) can be written as
\begin{align*}
	\hat P = - \frac{1}{2 \pi i} \oint_{\gamma^+}R_{\hat M}(\eta)  d\eta - \frac{1}{2 \pi i} \oint_{\gamma^-}R_{\hat M}(\eta)  d\eta. 
\end{align*}	
Note that $D(\hat M) = D(M) + D(E)$ and that $R_{M}(\eta) =  (D(M)-\eta I)^{-1}$. We expand $R_{\hat M}(\eta) $ into its Neumann series:
	\begin{align}
R_{\hat M}(\eta)& =(D(M)-\eta I+D(E))^{-1}=  (  (D(M)-\eta I) (I+R_M(\eta)D(E)))^{-1}   \notag \\
	&  =  (I+R_M(\eta)D(E))^{-1} R_M(\eta) =  \sum_{j=0}^\infty (-1)^{j} [R_M(\eta)D(E)]^jR_M(\eta)   \notag \\
	& = R_M(\eta)-R_M(\eta)D(E)R_M(\eta)+\sum_{j=2}^\infty (-1)^{j} [R_M(\eta)D(E)]^jR_M(\eta). \label{resolvent expansion}
	\end{align}
	Applying the Riesz representation Theorem on $P$, we have that 
	\begin{align*}
	P &=  - \frac{1}{2 \pi i} \oint_{\gamma^+}(D( M)-\eta I)^{-1} d\eta - \frac{1}{2 \pi i} \oint_{\gamma^-}(D( M)-\eta I)^{-1} d\eta \\
	& =  - \frac{1}{2 \pi i} \oint_{\gamma^+}R_{ M}(\eta)  d\eta - \frac{1}{2 \pi i} \oint_{\gamma^-}R_{M}(\eta)  d\eta.
	\end{align*}
	As a result, we have the decomposition 
	\begin{align}\label{eqn:hat_P_P_diff}
	\hat P - P = L(E)+S(E)
	\end{align}
	where $L(E)$ and $S(E)$ are operators on $E$, defined as
		\begin{align}\label{eqn:L_E_def}
	L(E)=\frac{1}{2 \pi i} \oint_{\gamma^+} R_M(\eta)D(E) R_M(\eta)d \eta + \frac{1}{2 \pi i} \oint_{\gamma^-} R_M(\eta)D(E) R_M(\eta)d \eta. 
	\end{align}
	and 
	\begin{align} \label{nonlinear term}
	S(E)&=-\frac{1}{2 \pi i} \oint_{\gamma^+} \sum_{j=2}^\infty (-1)^{j} [R_M(\eta)D(E)]^jR_M(\eta)d \eta \nonumber \\
	&\quad  -\frac{1}{2 \pi i} \oint_{\gamma^-} \sum_{j=2}^\infty (-1)^{j} [R_M(\eta)D(E)]^jR_M(\eta)d \eta.
	\end{align}
	By  (\ref{eqn:hatVV_VV_simplify}), we have that 
	\begin{align*}
	 \br{I-VV^T}\br{\hat V_{a:b} \hat V_{a:b}^T - V_{a:b}V_{a:b}^T} V_{a:b} &=  \begin{pmatrix}
 O_{n\times p } & \br{I-VV^T}
 \end{pmatrix} L(E) \begin{pmatrix}
 O_{n\times p } \\ V_{a:b}
 \end{pmatrix} \\
 &\quad + \begin{pmatrix}
 O_{n\times p } & \br{I-VV^T}
 \end{pmatrix} S(E) \begin{pmatrix}
 O_{n\times p } \\ V_{a:b}
 \end{pmatrix}.
	\end{align*}

	\paragraph{Step 2}
	In the following, we show that the first term on the right hand side of the above formula equals$ \sum_{j}\frac{1}{\sigma_j}\br{I-VV^T}E^Tu_je_j^T$, which implies that the second term equals $S_{a:b}$.
	
	Define
	\begin{align}\label{eqn:L_a_b_def}
	L_{a:b} =  \begin{pmatrix}
 O_{n\times p } & \br{I-VV^T}
 \end{pmatrix} L(E) \begin{pmatrix}
 O_{n\times p } \\ V_{a:b}
 \end{pmatrix}.
	\end{align}
We first simplify $L(E)$. Recalling  (\ref{eqn:Pi_def}), for any $i$ such that $\abs{i}\leq k$, we have that  $P_i= \theta_i\theta_i^T$, where $\theta_i =\frac{1}{\sqrt{2}} (u_i^T,v_i^T)^T,\theta_{-i}=\frac{1}{\sqrt{2}}(u_i^T,-v_i^T)^T$. We expand this into an orthonormal basis of $\mathbb{R}^{p+n}$,  $\cbr{\theta_i,\theta_{-i}}_{i\in[k]}\cup \cbr{\theta_j}_{k+1 \leq j \leq p+n-k}$. This implies the following: 
	\begin{itemize}
	\item For $k+1 \leq j \leq p+n-k$, we define $P_j = \theta_j\theta_j^T$ and decomlpose the identity matrix as 
	\begin{align*}
	I = \sum_{i\in \cbr{1,\ldots,p+n-k}\cup\cbr{-k,\ldots, -1}} P_i 
\end{align*}	 
  In the rest of the proof, by default we treat $\cbr{1,\ldots,p+n-k}\cup\cbr{-k,\ldots, -1}$ to be the whole set for the index $i$. We drop it when there is no ambiguity. For instance, the above equation can be simply written as $I = \sum_i P_i$.
  
  \item We define
 \begin{align}\label{eqn:sigma_j_k_0}
 \sigma_j = 0,\forall k+1 \leq j \leq p+n-k.
\end{align}  
Then  (\ref{eqn:D_M_def}) can be expressed as 
  \begin{align*}
  D(M) = \sum_i \sigma_i P_i.
  \end{align*}
  
  \item  For $k+1 \leq j \leq p+n-k$, $\theta_j$ is orthogonal to $\theta_i - \theta_{-i},\forall i\in[k]$. This implies that the second part of $\theta_j$ (i.e., from the $(p+1)$th coordinate to the $(p+n)$th coordinate) is $0$, or orthogonal to $\text{span}(v_1,\ldots, v_k)$. Thus,
  \begin{align}\label{eqn:P_i_0}
  \begin{pmatrix}
 O_{n\times p } & \br{I-VV^T}
 \end{pmatrix} P_i  = O,\forall i\text{ s.t. }\abs{i}\leq k,
  \end{align} 
  \begin{align}\label{eqn:P_i_0_1}
  P_i \begin{pmatrix}
 O_{n\times p } \\ V_{a:b}
 \end{pmatrix} = O,\forall i\text{ s.t. }\abs{i}\notin \cbr{a,\ldots, b}.
  \end{align}
  and
  \begin{align}\label{eqn:P_i_0_2}
   \begin{pmatrix}
 O_{n\times p } & \br{I-VV^T}
 \end{pmatrix} \sum_{i >k}  P_i \begin{pmatrix}
 O_{p\times n } \\ O_{I_{n\times n}} 
 \end{pmatrix} = I- VV^T.
  \end{align}
	\end{itemize}
	By  (\ref{eqn:D_M_def}), we have that 
	\begin{align}\label{eqn:R_m_eta_def}
	R_{M}(\eta) & =  (D(M)-\eta I)^{-1}  =  \br{\sum_{i}\sigma_i P_i - \eta I}^{-1}  =  \br{\sum_{i}\br{\sigma_i-\eta} P_i }^{-1} \nonumber\\
	&= \sum_{i} \frac{1}{\sigma_i - \eta} P_i 
	 =  \sum_{i\in \cbr{a,\ldots, b}} \frac{1}{\sigma_i - \eta} P_i +  \sum_{i\notin \cbr{a,\ldots, b}} \frac{1}{\sigma_i - \eta}P_i,
	\end{align}
	defined as $R_1^+ (\eta)$ and $R_2^+ (\eta)$ respectively. With this, for the first term of $L(E)$ in  (\ref{eqn:L_E_def}), we have that 
	\begin{align*}
	\frac{1}{2 \pi i} \oint_{\gamma^+} R_M(\eta)D(E) R_M(\eta)d \eta &  = \frac{1}{2 \pi i} \oint_{\gamma^+} \left (R_1^+ (\eta)+ R_2^+ (\eta) \right )D(E) \left (R_1^+ (\eta)+R_2^+ (\eta) \right )d \eta.
	\end{align*}
	Observe that by the Cauchy-Goursat Theorem,
	\begin{align*}
	& \oint_{\gamma^+} R_1^+ (\eta) D(E) R_1^+ (\eta) d \eta \\  =&  \sum_{i \in \{a, \dots, b \}}P_iD(E)P_i \oint_{\gamma^+} \frac{1}{(\sigma_i-\eta)^2}d \eta + \sum_{i \neq j, ~i,j \in \{a, \dots, b \}} P_iD(E)P_j \oint_{\gamma^+} \frac{1}{(\sigma_i-\eta)(\sigma_j-\eta)}d \eta \\=& 0,
	\end{align*}
	since there is no singularity inside $\gamma^+$. The identical result holds for  $\oint_{\gamma^+} R_2^+ (\eta) D(E) R_2^+ (\eta) d \eta$. Using the Cauchy integral formula, we obtain that 
	\begin{align*}
	& \frac{1}{2 \pi i} \oint_{\gamma^+} R_1^+ (\eta)D(E) R_2^+ (\eta) d \eta  = \sum_{i \in \{a, \dots, b\} } \sum_{j \notin \{a, \dots, b\} }\frac{1}{2\pi i} \oint_{\gamma^+} \frac{d \eta}{(\sigma_i-\eta)(\sigma_j-\eta)} P_iEP_j \\
	= & \sum_{i\in \{ a, \dots, b\} } \sum_{j \notin \{a, \dots, b\}} \frac{P_i E P_j}{\sigma_i-\sigma_j}.
	\end{align*}
	A similar result holds for $ \frac{1}{2 \pi i} \oint_{\gamma^+} R_2^+ (\eta)D(E) R_1^+ (\eta) d \eta$. Hence, we obain that 
	\begin{align*}
		\frac{1}{2 \pi i} \oint_{\gamma^+} R_M(\eta)D(E) R_M(\eta)d \eta = \sum_{i \in \{  a, \dots, b\} } \sum_{j \notin \{a, \dots, b\}} \frac{P_i D(E) P_j+P_jD(E)P_i}{\sigma_i-\sigma_j}.
	\end{align*}
	In the same manner, splitting
	 \begin{equation}
	R_M(\eta)=R_1^-(\eta)+R_2^-(\eta)\triangleq \sum_{i \in \{-b, \dots, -a\}} \frac{P_{i}}{\sigma_i-\eta}+\sum_{i \notin \{-b, \dots, -a\}} \frac{P_{i}}{\sigma_i-\eta},
	\end{equation} we also obtain that
	\begin{align}\frac{1}{2 \pi i} \oint_{\gamma^-} R_M(\eta)D(E) R_M(\eta)d \eta =  \sum_{i \in \{-b, \dots, -a\} } \sum_{j \notin \{-b, \dots, -a\}} \frac{P_{i} D(E) P_{j}+P_{j}D(E)P_{i}}{\sigma_i-\sigma_j}. \end{align}
	Hence, we have that 
	\begin{align}
	L(E)=\br{ \sum_{i \in \{  a, \dots, b\} } \sum_{j \notin \{a, \dots, b\}} +  \sum_{i \in \{-b, \dots, -a\} } \sum_{j \notin \{-b, \dots, -a\}}  } \frac{P_i D(E) P_j+P_jD(E)P_i}{\sigma_i-\sigma_j}.
	\end{align}	
	Note that for any $i$ such $\abs{i}\in \{ a, \dots, b \}$ and any $\abs{j} \notin \{a, \dots, b\}$,  (\ref{eqn:P_i_0}) and (\ref{eqn:P_i_0_1}) imply
	\begin{align*}
	 \begin{pmatrix}
 O_{n\times p } & \br{I-VV^T}
 \end{pmatrix} P_i D(E) P_j  \begin{pmatrix}
 O_{n\times p } \\ V_{a:b}
 \end{pmatrix} =0.
	\end{align*}
	Together with  (\ref{eqn:L_a_b_def}), this implies that 
	\begin{align*}
	L_{a:b} = &\br{ \sum_{i \in \{  a, \dots, b\} } \sum_{j \notin \{a, \dots, b\}} +  \sum_{i \in \{-b, \dots, -a\} } \sum_{j \notin \{-b, \dots, -a\}}  }  \begin{pmatrix}
 O_{n\times p } & \br{I-VV^T}
 \end{pmatrix} \frac{P_jD(E)P_i}{\sigma_i - \sigma_j} \begin{pmatrix}
 O_{n\times p } \\ V_{a:b}
 \end{pmatrix} \\
  = &  \br{\sum_{i \in \{  a, \dots, b\} } +  \sum_{i \in \{  -b, \dots, -a\} } } \sum_{j >k}\begin{pmatrix}
 O_{n\times p } & \br{I-VV^T}
 \end{pmatrix} \frac{P_jD(E)P_i}{\sigma_i } \begin{pmatrix}
 O_{n\times p } \\ V_{a:b}
 \end{pmatrix}.
\end{align*}		 Recall that for all $i \leq k$, $\sigma_{-i} = -\sigma_i$. This yields 
\begin{align*}
L_{a:b}  & =  \sum_{i \in \{  a, \dots, b\} } \frac{1}{\sigma_i}\begin{pmatrix}
 O_{n\times p } & \br{I-VV^T}
 \end{pmatrix} \br{\sum_{j >k} P_j}D(E)\br{P_i - P_{-i}} \begin{pmatrix}
 O_{n\times p } \\ V_{a:b}
 \end{pmatrix}\\
 & =   \sum_{i \in \{  a, \dots, b\} } \frac{1}{\sigma_i}\begin{pmatrix}
 O_{n\times p } & \br{I-VV^T}
 \end{pmatrix}\br{\sum_{j >k} P_j} \begin{pmatrix}
 O & E \\
 E^T & O
 \end{pmatrix} \begin{pmatrix}
 O & u_i v_i^T \\ v_i^T u_i  & O 
 \end{pmatrix}\begin{pmatrix}
 O_{n\times p } \\ V_{a:b}
 \end{pmatrix}\\
 &= \sum_{i \in \{  a, \dots, b\} } \frac{1}{\sigma_i}\begin{pmatrix}
 O_{n\times p } & \br{I-VV^T}
 \end{pmatrix}\br{\sum_{j >k} P_j}  \begin{pmatrix}
 O_{p\times n} \\ I_{n\times n}
 \end{pmatrix} E^T u_iv_i^TV_{a:b}\\
 & =  \sum_{i \in \{  a, \dots, b\} } \frac{1}{\sigma_i} \br{I- VV^T} E^T u_iv_i^TV_{a:b},
 \end{align*}
 where  the last equation is due to  (\ref{eqn:P_i_0_2}). This implies 
 \begin{align}\label{eqn:S_ab_explicit}
 S_{a:b} &=  \br{I-VV^T}\br{\hat V_{a:b} \hat V_{a:b}^T - V_{a:b}V_{a:b}^T} V_{a:b}  - L_{a:b} \nonumber\\
 & =  \begin{pmatrix}
 O_{n\times p } & \br{I-VV^T}
 \end{pmatrix} S(E) \begin{pmatrix}
 O_{n\times p } \\ V_{a:b}
 \end{pmatrix}.
 \end{align}
	
	\paragraph{Step 3} In the final step, we upper bound $\norm{ S_{a:b} }$ by using the  formula above. By  (\ref{eqn:R_m_eta_def}), for any $\eta\in \gamma^+$ or $\eta\in \gamma^-$, we have that  
\begin{align}\label{eqn:R_m_upper}
\norm{R_M(\eta)} \leq  \frac{2}{g_{a:b}}.
\end{align}	
Moreover, we have that 
	\begin{align*}
	|\gamma^+|=|\gamma^-|\leq 2 (\sigma_a-\sigma_b)+ \pi g_{a:b}.
	\end{align*}
Recall the definition of $S(E)$ in  (\ref{nonlinear term}). Note that  $\norm{D(E)}  = \norm{E}$. 
\begin{itemize}
\item Under the assumption that $\|E\| \leq g_{a:b}/4$, we have that 
		\begin{align}\label{eqn:S_ab_upper_bound}
	\| S_{a:b}\| & \leq \| S(E)\|  \leq \frac{|\gamma^+| + |\gamma^-|}{2 \pi } \sum_{j=2}^\infty \|R_M(\eta)\|^{j+1} \|D(E)\|^j \notag  \\ & \leq \frac{2 (\sigma_a-\sigma_b)+ \pi g_{a:b}}{\pi} \|E\|^2 \left (\frac{2}{g_{a:b}} \right )^3 \sum_{j=0}^\infty \|E\|^{j} \left (\frac{2}{g_{a:b}} \right )^{j} \notag  \\
	& \leq \left (\frac{16 (\sigma_a-\sigma_b)}{\pi g_{a:b}}+ 8 \right ) \frac{\|E\|^2}{g_{a:b}^2}  \sum_{j=0}^\infty \|E\|^{j} \left (\frac{2}{g_{a:b}} \right )^{j} \notag \\
	& \leq \left (\frac{32 (\sigma_a-\sigma_b)}{\pi g_{a:b}}+ 16 \right ) \frac{\|E\|^2}{g_{a:b}^2}.
	\end{align}
	
	\item If $\|E\|> g_{a:b}/4$, by  (\ref{eqn:S_ab_explicit}) we have that 
	\begin{align*}
	\norm{S_{a:b}} \leq \norm{\hat V_{a:b} \hat V_{a:b}^T - V_{a:b}V_{a:b}^T} +\norm{L_{a:b}}.
	\end{align*}
	The first term is bounded by $2$. By the definition  of $\norm{L_{a:b}}$, the second term can be bounded as follows 
	\begin{align*}
	\norm{L_{a:b}} & =\norm{\br{I-VV^T}E^T \br{\sum_{i\in\cbr{a,\ldots, b}} \frac{1}{\sigma_i} u_iv_i^T}V_{a:b}} \leq  \frac{\norm{E}}{\min_{i\in\cbr{a,\ldots, b}} \sigma_i} \leq  \frac{\norm{E}}{g_{a:b}}.
	\end{align*}
	Hence, we finally obtain that 
	\begin{align*}
	\norm{S_{a:b}} \leq 2  + \frac{\norm{E}}{g_{a:b}} \leq 16  \frac{\|E\|^2}{g_{a:b}^2}.
	\end{align*}
\end{itemize}

	\end{proof}

Finally, we  prove \ref{lem:VV_pert_2}.

\begin{proof}[Proof of Lemma \ref{lem:VV_pert_2}]
We follow the same decomposition and notation as in the proof of Lemma  \ref{lem:VV_pert_1}. Recall the definition of $\hat P$ and $P$ in  (\ref{eqn:P_hat_P_def}). In particular, due to  (\ref{eqn:hat_P_P_diff}), we have that 
\begin{align*}
\hat P - P = L(E) + S(E),
\end{align*}
where $L(E)$ and $S(E)$ are defined in  (\ref{eqn:L_E_def}) and  (\ref{nonlinear term}), respectively. Define $\hat P',L(E'),S(E')$ in the same manner for $M'$. Then we have that 
\begin{align*}
S(E') - S(E) = \hat P' - \hat P - \br{L(E') - L(E)}.
\end{align*}
As a consequence, due to  (\ref{eqn:S_ab_explicit}), we obtain that 
\begin{align*}
S_{a:b}(E') - S_{a:b}(E) &=   \begin{pmatrix}
 O_{n\times p } & \br{I-VV^T}
 \end{pmatrix} \br{S(E') - S(E)} \begin{pmatrix}
 O_{n\times p } \\ V_{a:b}
 \end{pmatrix} \\
 &=\begin{pmatrix}
 O_{n\times p } & \br{I-VV^T}
 \end{pmatrix} \br{\hat P' - \hat P} \begin{pmatrix}
 O_{n\times p } \\ V_{a:b}
 \end{pmatrix}  \\
 &\quad - \begin{pmatrix}
 O_{n\times p } & \br{I-VV^T}
 \end{pmatrix} \br{L(E') - L(E)} \begin{pmatrix}
 O_{n\times p } \\ V_{a:b}
 \end{pmatrix} .
\end{align*}
In the proof of Lemma \ref{lem:VV_pert_1}, we analyze the difference between $\hat P$ and $P$. By the exactly the same argument, we analyze the difference between $\hat P'$ and $\hat P$. As in  (\ref{eqn:hat_P_P_diff}), we have that 
\begin{align*}
\hat P' - \hat P = \hat L(E' - E) + \hat S(E' - E),
\end{align*}
where
\begin{align}\label{eqn:hat_L_def}
	\hat L(E' - E) & =\frac{1}{2 \pi i} \oint_{\gamma^+} R_{\hat M}(\eta)D(E' - E) R_{\hat M}(\eta)d \eta \nonumber \\
	&\quad + \frac{1}{2 \pi i} \oint_{\gamma^-} R_{\hat M}(\eta)D(E' -E) R_{\hat M}(\eta)d \eta. 
	\end{align}
	and 
	\begin{align*} 
	\hat S(E'-E)&=-\frac{1}{2 \pi i} \oint_{\gamma^+} \sum_{j=2}^\infty (-1)^{j} [R_{\hat M}(\eta)D(E' -E)]^jR_{\hat M}(\eta)d \eta \nonumber \\
	&\quad  -\frac{1}{2 \pi i} \oint_{\gamma^-} \sum_{j=2}^\infty (-1)^{j} [R_{\hat M}(\eta)D(E'-E)]^jR_{\hat M}(\eta)d \eta,
	\end{align*}
	with $\gamma^+,\gamma^-$ defined in  (\ref{eqn:gamma_plus}) and  (\ref{eqn:gamma_minus}). 
Hence, we have that 
	\begin{align*}
	S_{a:b}(E') - S_{a:b}(E) & = \begin{pmatrix}
 O_{n\times p } & \br{I-VV^T}
 \end{pmatrix} \hat S(E'-E) \begin{pmatrix}
 O_{n\times p } \\ V_{a:b}
 \end{pmatrix}  \\
 & \quad + \begin{pmatrix}
 O_{n\times p } & \br{I-VV^T}
 \end{pmatrix} \br{\hat L(E' - E) - \br{L(E') - L(E)}} \begin{pmatrix}
 O_{n\times p } \\ V_{a:b}
 \end{pmatrix} ,
	\end{align*}
	which implies
	\begin{align}\label{eqn:S_Ep_E_diff}
	\norm{S_{a:b}(E') - S_{a:b}(E)} \leq \norm{\hat S(E'-E)} + \norm{\hat L(E' - E) - \br{L(E') - L(E)}}.
	\end{align}
	We are going to establish upper bounds on the two terms individually.
	
	\paragraph{Step 1}
	We first bound the second term above. Due to  (\ref{eqn:L_E_def}),   (\ref{eqn:hat_L_def}) and the fact that $D(E'-E)=D(E')-D(E)$, we have that 
	\begin{align*}
	&\hat L(E' - E) - \br{L(E') - L(E)} \\
	& = \frac{1}{2 \pi i} \oint_{\gamma^+}  \br{R_{\hat M}(\eta)D(E' - E) R_{\hat M}(\eta)d \eta -R_{ M}(\eta)D(E' - E) R_{ M}(\eta)}d \eta \\
	&\quad + \frac{1}{2 \pi i} \oint_{\gamma^-}  \br{R_{\hat M}(\eta)D(E' - E) R_{\hat M}(\eta)d \eta -R_{ M}(\eta)D(E' - E) R_{ M}(\eta)} d \eta.
	\end{align*}
By Weyl’s inequality (Theorem 4.3.1 of \citep{horn2012matrix}), we have $\abs{\hat \sigma_i - \sigma_i}\leq \norm{E},\forall i\in[p\wedge n]$. Assuming that $\norm{E}\leq g_{a:b}/4$, the minimum distance between $\gamma^+,\gamma^-$ to the points $\cbr{(\hat \sigma_i  , 0)}$  is at least $g_{a:b}/2- \norm{E} \geq g_{a:b}/4$, for all $i\in[p\wedge n]$. Similarly as   (\ref{eqn:R_m_upper}), we obtain that 
	\begin{align*}
	\norm{R_{\hat M}(\eta)} \leq \frac{4}{g_{a:b} },\forall \eta\in \gamma^+,\gamma^-.
	\end{align*}
Hence, 	together with the fact that $\norm{D(E'-E)} = \norm{E' -E}$, we have that 
	\begin{align*}
	&\norm{  \oint_{\gamma^+}  \br{R_{\hat M}(\eta)D(E' - E) R_{\hat M}(\eta)d \eta -R_{ M}(\eta)D(E' - E) R_{ M}(\eta)}d \eta}\\
	&\leq \norm{ \oint_{\gamma^+} R_{\hat M}(\eta)D(E'-E)( R_{ \hat M}(\eta)- R_{M}(\eta) ) d \eta }  + \norm{  \oint_{\gamma^+} (R_{\hat M}(\eta)-R_{ M}(\eta))D(E'-E) R_{  M}(\eta)d \eta}\\
& \leq   \frac{8|\gamma^+|}{g_{a_b}}  \|E'-E\| \sup_{\eta \in \gamma^+} \|R_{\hat M}(\eta)-R_{M}(\eta )\|.
	\end{align*}
	Moreover, by the expansion of the resolvent into a Neumann series in \eqref{resolvent expansion}, we have that 
	\begin{align*}
	\|R_{\hat M}(\eta)-R_{M}(\eta )\| &  \leq \sum_{j=1}^\infty (\|R_{ M}(\eta) \| \|E\|)^j \|R_{ M}(\eta)\| \leq   \norm{R_{ M}(\eta)}^2 \norm{E}      \sum_{j=0}^\infty (\|R_{ M}(\eta) \| \|E\|)^j \\
	&  \leq \frac{8\norm{E}}{g_{a:b}^2},
	\end{align*}
	where the last inequality is due to  (\ref{eqn:R_m_upper}). Hence, as $|\gamma^+|\leq \pi g_{a:b}+2(\sigma_a-\sigma_b)$, we have that 
	\begin{align*}
	&\norm{  \oint_{\gamma^+}  \br{R_{\hat M}(\eta)D(E' - E) R_{\hat M}(\eta)d \eta -R_{ M}(\eta)D(E' - E) R_{ M}(\eta)}d \eta} \\
	&\leq  64\br{\pi + \frac{2(\sigma_a-\sigma_b)}{g_{a:b}}} \frac{\norm{E}\norm{E'-E}}{g_{a:b}^2}.
	\end{align*}
	The same result holds for the other integral over $\gamma^{-}$. Hence, we obtain that 
	\begin{align*}
	\norm{\hat L(E' - E) - \br{L(E') - L(E)}} \leq 64\br{1+ \frac{2(\sigma_a-\sigma_b)}{\pi g_{a:b}}} \frac{\norm{E}\norm{E'-E}}{g_{a:b}^2}.
	\end{align*}
	
	\paragraph{Step 2} For the term related to $\hat S$, we bound it analogously as in the proof of Lemma \ref{lem:VV_pert_1}. Following  (\ref{eqn:S_ab_upper_bound}),  we have that 
	\begin{align*}
	 \norm{\hat S(E'-E)} \leq 64\left (\frac{32 (\sigma_a-\sigma_b)}{\pi g_{a:b}}+ 16 \right ) \frac{\|E' - E\|^2}{g_{a:b}^2}.
	\end{align*}
	Combining the above result with  (\ref{eqn:S_Ep_E_diff}), we obtain that 
	\begin{align*}
	\norm{S_{a:b}(E') - S_{a:b}(E)} \leq 1024\left (1+\frac{\sigma_a-\sigma_b}{g_{a:b}} \right )\frac{\max \cbr{\|E\|, \|E'\| }}{g_{a:b}^2} \|E-E'\|.
	\end{align*}

\end{proof}


\end{document}